\newtheorem{theorem}{Theorem}
\newtheorem{lemma}[theorem]{Lemma}
\newtheorem{proposition}[theorem]{Proposition}
\newtheorem{corollary}[theorem]{Corollary}
\theoremstyle{definition}
\newtheorem{definition}[theorem]{Definition}
\newtheorem{example}[theorem]{Example}
\newtheorem{remark}[theorem]{Remark}
\newcommand{\Section}[1]{\section{#1}\setcounter{theorem}{0}}
\newcommand{\tsum}{\textstyle\sum}
\newcommand{\<}{\langle}
\renewcommand{\>}{\rangle}
\newcommand{\N}{{\mathbb N}}
\newcommand{\Z}{{\mathbb Z}}
\renewcommand{\phi}{{\varphi}}
\newcommand{\R}{{\mathbb R}}
\newcommand{\C}{{\mathbb C}}
\renewcommand{\v}{{\mathfrak v}}
\newcommand{\z}{{\mathfrak z}}
\newcommand{\g}{{\mathfrak g}}
\newcommand{\so}{{\mathfrak{so}}}
\newcommand{\n}{{\mathfrak n}}
\newcommand{\m}{{\mathfrak m}}
\newcommand{\E}{{\mathcal{E}}}
\renewcommand{\O}{{\operatorname{O}}}
\newcommand{\liebr}{{[\,\,,\,]}}
\newcommand{\scp}{{\<\,\,,\,\>}}
\newcommand{\spann}{{\operatorname{span}}}
\newcommand{\kernn}{{\operatorname{ker}}}
\newcommand{\divv}{{\operatorname{div}}}
\newcommand{\grad}{{\operatorname{grad}}}
\newcommand{\ric}{{\operatorname{ric}}}
\newcommand{\Ric}{{\operatorname{Ric}}}
\newcommand{\rcirc}{{\operatorname{\overset{\hphantom{||}_\circ}{\textit R}}}}
\newcommand{\threestar}{{(*}{*}{*)}}
\newcommand{\scal}{{\operatorname{\textsl scal}}}
\newcommand{\Id}{{\operatorname{Id}}}
\newcommand{\Tr}{{\operatorname{Tr}}}
\renewcommand{\Pr}{{\operatorname{Pr}}}
\newcommand{\dimm}{{\operatorname{dim}}}
\newcommand{\dvol}{{\operatorname{\textit{dvol}}}}
\newcommand{\vol}{{\operatorname{vol}}}
\newcommand{\inv}{^{-1}}
\newcommand{\restr}[1]{\lower0.4ex\hbox{$|$}\lower0.7ex
  \hbox{$\scriptstyle{#1}$}}
\begin{document}
\title[Inaudibility of sixth order curvature invariants]{Inaudibility of sixth
order curvature invariants}
\author{Teresa Arias-Marco}
\address{Departamento de Matem\'aticas, Universidad de Extremadura, 06006 Badajoz,
Spain}
\email{ariasmarco@unex.es}
\author{Dorothee Schueth}
\address{Institut f\"ur Mathematik, Humboldt-Universit\"at zu
Berlin, D-10099 Berlin, Germany}
\email{schueth@math.hu-berlin.de}

\keywords{Laplace operator, isospectral manifolds, heat invariants,
curvature invariants, two-step nilmanifolds, Clifford modules}
\subjclass[2010]{58J50, 58J53, 53C25, 53C30, 53C20, 22E25}

\thanks{The authors were partially supported by by DFG
Sonderforschungsbereich~647. The first author's work has also been supported
by D.G.I.~(Spain) and FEDER Project MTM2013-46961-P, by Junta de Extremadura and FEDER funds.}

\begin{abstract}
It is known that the spectrum of the Laplace operator on functions of a closed Riemannian
manifold does not determine the integrals of the individual fourth order curvature
invariants $\scal^2$, $|\ric|^2$, $|R|^2$, which appear as summands in the second heat
invariant~$a_2$. We study the analogous question for the integrals of the sixth order curvature
invariants appearing as summands in~$a_3$. Our result is that none of them is
determined individually by the spectrum, which can be shown using various examples.
In particular, we prove that two isospectral nilmanifolds of Heisenberg type
with three-dimensional center are locally isometric if and only if they have the
same value of $|\nabla R|^2$. In contrast, any pair of isospectral nilmanifolds of
Heisenberg type with centers of dimension $r>3$ does not differ in any
curvature invariant of order six, actually not in any curvature
invariant of order smaller than $2r$. We also prove that this implies that for
any $k\in\N$, there exist locally homogeneous manifolds which are not curvature
equivalent but do not differ in any curvature invariant of order up to~$2k$.
\end{abstract}

\maketitle

\Section{Introduction}
\label{sec:intro}
\noindent
Let $(M,g)$ be a closed Riemannian manifold. The eigenvalue spectrum (with multiplicities)
of the associated Laplace operator $\Delta_g=-\divv_g\grad_g$ acting on smooth functions
is classically known to determine not only
the dimension and the volume of $(M,g)$ (by Weyl's asymptotic formula), but also the so-called heat invariants
$a_0(g), a_1(g), a_2(g),\ldots$. These are defined as the coefficients appearing in
Minakshisundaram-Pleijel's asymptotic expansion
$$\Tr\bigl(\exp(-t\Delta_g)\bigr)\;\sim\;(4\pi t)^{-\dimm M/2}
\tsum_{q=0}^\infty a_q(g)t^q\quad\text{ for }\,t\searrow 0.
$$
Here,
\begin{equation*}
  \begin{split}
  a_0(g)&=\vol(M,g),\\
  a_1(g)&=\tfrac16\textstyle\int_M\scal\,\dvol_g,\\
  a_2(g)&=\tfrac1{360}\textstyle\int_M(5\scal^2-2|\ric|^2+2|R|^2)\dvol_g,
  \end{split}
\end{equation*}
where $\scal$, $\ric$ and $R$ denote the scalar curvature, the Ricci tensor and the Riemannian
curvature tensor of $(M,g)$, respectively. In general, each $a_q(g)$ is known to be the integral
of some curvature invariant of order~$2q$ on~$(M,g)$; see, e.g.,~\cite{Gi}.

By definition, a curvature invariant is a polynomial in the coefficients of the Riemannian
curvature tensor~$R$ and its covariant derivatives $\nabla R$, $\nabla^2R$, \dots,
where the coefficients are taken with respect to some orthonormal basis of the tangent space
at the point under consideration, and the polynomial is required to be invariant under changes
of the orthonormal basis. Following the definitions, e.g., in~\cite{GV},
such an invariant is called an invariant of order~$k$ if it
is a sum of terms each of which involves a total of~$k$ derivatives of the metric tensor.
Each occurrence of~$R$ or any of its contractions involves two derivatives; each occurrence of~$\nabla$
adds one more derivative.  (See the proof of Proposition~\ref{prop:formofcurvinvs} below for a more
explicit description.) So, for example, $|\nabla R|^2=\<\nabla R,\nabla R\>$ is a curvature
invariant of order six.

It is well-known that each nonzero curvature invariant must be of even order,
and that bases for the space of curvature invariants of order two, resp.~four, are given by
$$\{\scal\},\text{ resp. }\{\scal^2,|\ric|^2,|R|^2,\Delta\scal\}.
$$
Note that $\int_M\Delta\scal=0$, but each of the remaining three elements of the
above basis of the space of curvature invariants of order four
does appear in the linear combination constituting the integrand of~$a_2(g)$.

Two closed Riemannian manifolds are called \textit{isospectral} if their Laplacians have the
same eigenvalue spectra, including multiplicities. A geometric property or quantity
associated with closed Riemannian manifolds is called \textit{audible} if
it is determined by the spectrum. By the above, each $a_q$ is audible; in particular,
$a_2(g)=a_2(g')$ for
any isospectral manifolds $(M,g)$, $(M',g')$. So the integral of
$5\scal^2-2|\ric|^2+2|R|^2$ must be the same for both manifolds.

This does not hold for the individual terms in this linear combination:
In~\cite{simplyconn}, the second author gave the first examples of isospectral manifolds
that showed that the integrals of~$\scal^2$ and~$|\ric|^2$
are inaudible; other examples in~\cite{habil}
showed the same for the integral of~$|R|^2$.

The aim of this paper is to prove similar results for sixth order
curvature invariants. Note the following formula for $a_3(g)$ which was proved by T.~Sakai in~\cite{Sa}:
\begin{equation}
\label{eq:3HeatInv}
  \begin{split}
       a_3(g)=\tfrac1{45360}\textstyle\int_M&\bigl(-142|\nabla\scal|^2-26|\nabla\ric|^2
       -7|\nabla R|^2+35\scal^3-42\scal|\ric|^2+42\scal|R|^2\\
       &-36\Tr(\Ric^3)+20(*)-8(**)+24\hat R\bigr)\dvol_g;
  \end{split}
\end{equation}
for the definition of the curvature invariants denoted here by $(*)$, $(**)$, $\hat R$
(and two more, $\rcirc$ and~$\threestar$), we refer to~\eqref{eq:6OrInv} in
Section~\ref{sec:prelims}.

It is already known that the integral of the individual term $|\nabla\scal|^2$ can
indeed differ in pairs of isospectral manifolds: C.~Gordon and Z.~Szabo constructed pairs
of isospectral closed manifolds one of which has constant scalar curvature, while the other has
nonconstant scalar curvature; see~\cite{GS}.

In this paper, we will show that for each of the individual summands in~\eqref{eq:3HeatInv},
there exist examples of isospectral manifolds differing in the integral of that curvature invariant.
The most interesting of these is arguably
$|\nabla R|^2$ which vanishes if and only if the manifold is locally symmetric.
Although we do not know of any example proving inaudibility of local symmetry,
we do show that the integral of $|\nabla R|^2$ is inaudible.

For a few of the sixth order curvature invariants, inaudibility will follow
already from known examples of isospectral manifolds. To study the remaining ones,
we will use a certain class of locally homogeneous manifolds, namely,
Riemannian two-step nilmanifolds. These are quotients of two-step nilpotent
Lie groups, endowed with a left invariant metric, by
cocompact discrete subgroups. By local homogeneity,
each curvature invariant is a constant function on such a manifold.
We develop some general insight into
the structure of the curvature invariants of Riemannian two-step nilmanifolds
(Proposition~\ref{prop:formofcurvinvs})
and give explicit formulas for the fourth and some of the sixth order curvature
invariants in this setting (Lemma~\ref{lem:lowerorder}, Lemma~\ref{lem:nablaric}).
For $|\nabla R|^2$, $\hat R$ and $\rcirc$ we give
only partially explicit formulas (Lemma~\ref{lem:nablar}).
These formulas will, however, be sufficient to show
inaudibility of $\int|\nabla R|^2$, $\int\hat R$ and~$\int\rcirc$
by using isospectral pairs of nilmanifolds of Heisenberg type.

The latter constitute a special class of Riemannian two-step
nilmanifolds and were introduced by A.~Kaplan; the very first example of isospectral,
locally nonisometric Riemannian manifolds found by C.~Gordon~\cite{Go} in 1993 was a pair of
nilmanifolds of Heisenberg type.
Within this class, we prove, in particular, the following results:

$\bullet$ For any pair of isospectral nilmanifolds of Heisenberg type
with three-dimensional centers of the underlying Lie groups, equality of the
value of (the constant function) $|\nabla R|^2$ on these manifolds is equivalent to local isometry;
the same holds for $\hat R$ and $\rcirc$ (Theorem~\ref{thm:nablar-equiv}).
Since isospectral, locally nonisometric pairs of this type exist, this
implies inaudibility of these curvature invariants.

$\bullet$
A pair of isospectral nilmanifolds of Heisenberg type
where the dimension of the centers of the underlying Lie groups is~$r$
can never be distinguished by the value of any curvature invariant of order $2q<2r$
(Theorem~\ref{thm:zgreater}).

$\bullet$
Two locally nonisometric nilmanifolds of Heisenberg type are never
curvature equivalent, meaning that there is no isometry of the associated
metric Lie algebras intertwining the Riemannian curvature tensors (Proposition~\ref{prop:notcurveq}).
In particular, for any $k\in\N$ there exist pairs of locally homogeneous
manifolds which are not curvature equivalent, but do not differ in any
curvature invariant up to order~$2k$ (Theorem~\ref{thm:notcurveqinspiteofinvars}).

\medskip
This paper is organized as follows:

In Section~\ref{sec:prelims}, we present some background information about
space of sixth order curvature invariants, introducing a commonly used
basis for this space and explaining certain
integral relations between the basis elements. We also observe that for some of
the basis elements, it already follows from known isospectral examples
that their integrals are not audible.

In Section~\ref{sec:two-step}, we review Riemannian two-step nilmanifolds,
a method from~\cite{GW:1997}
for obtaining isospectral pairs in this class, and some examples. In the
case of Heisenberg type nilmanifolds, we explain the general relation between
isospectral, locally
nonisometric examples and the existence of nonisomorphic
modules for the Clifford algebra associated with the centers (Remark~\ref{rem:heisexist}).

In Section~\ref{sec:curv}, we gain insight into the structure of the curvature
invariants in the general two-step nilpotent setting
(Proposition~\ref{prop:formofcurvinvs}), give formulas for the curvature invariants
of order two and four (Lemma~\ref{lem:lowerorder}), and also for several curvature
invariants of order six (Lemma~\ref{lem:nablaric}, Lemma~\ref{lem:nablar}).
Those proofs which involve somewhat lengthy calculations are deferred to
the Appendix. Applying the formulas, we prove inaudibility of
$\int\Tr(\Ric^3)$, $\int|\nabla\ric|^2$, $\int(*)$, $\int(**)$, $\int\threestar$
using the examples from Section~\ref{sec:two-step}.
As an aside, we also give an
example where the isospectral manifolds differ in $|\ric|^2$ and in~$|R|^2$;
although inaudibility of $\int|\ric|^2$ and $\int |R|^2$ was already known,
this is the first such example in the class of nilmanifolds.

In Section~\ref{sec:heis} we study the structure of curvature invariants
in the special class of Heisenberg type nilmanifolds. We prove
inaudibility of $\int|\nabla R|^2$, $\int\hat R$, $\int\rcirc$ and the other results
mentioned above (Theorem~\ref{thm:nablar-equiv}, Theorem~\ref{thm:zgreater}, Proposition~\ref{prop:notcurveq},
Theorem~\ref{thm:notcurveqinspiteofinvars}).

\Section{Preliminaries}
\label{sec:prelims}
\noindent
Let $(M,g)$ be a closed Riemannian manifold of dimension~$n$
with Levi-Civita connection $\nabla$. Let~$R$ be the associated
Riemannian curvature tensor; our sign
convention is such that
$$R(X,Y)=\nabla_{[X,Y]}-[\nabla_X,\nabla_Y].
$$
We denote by $\scal$, $\ric$, and~$\Ric$
the scalar curvature, the Ricci tensor, and the Ricci operator, respectively.

It is well-known that the space of curvature invariants of order~six has dimension~17
provided that $n\geq 6$ (see~\cite{GV}). A basis for this space (and still a generating
system in lower dimensions~$n$) is the
following, using index notation with respect to local orthonormal bases and
the Einstein summation convention:
\begin{equation}\label{eq:6OrInv}
  \begin{gathered}
  \scal^3,\;\scal|\ric|^2,\;\scal|R|^2,\; \Tr(\Ric^3),\;
  (*):=\ric_{ik}\ric_{jl}R_{ijkl},\; (**):=\ric_{ij}R_{ipqr}R_{jpqr},\;\\
  \hat R:=R_{ijkl}R_{klpq}R_{pqij},\; \rcirc:=R_{ikjl}R_{kplq}R_{piqj},\;
  |\nabla\scal|^2,\; |\nabla\ric|^2,\; |\nabla R|^2,\;\\
  \threestar:=\nabla_i\ric_{jk}\nabla_k\ric_{ij},\; \scal\,\Delta\scal,\; \Delta^2\scal,\;
  \langle\Delta\ric,\ric\rangle=-\ric_{ij}\nabla^2_{kk}\ric_{ij},\;\\
  \langle\nabla^2\scal,\ric\rangle=\left(\nabla^2_{ij}\scal\right)\ric_{ij},\;
  \langle\Delta R,R\rangle=-R_{ijkl}\nabla^2_{pp}R_{ijkl}.
  \end{gathered}
\end{equation}
The integrals of seven of the invariants in this basis either vanish
or can be expressed as a linear combination of integrals of certain others: First, note that
(with our sign convention for~$\Delta$)
\begin{equation}\label{eq:note}
  \begin{split}
  &\textstyle\int_M \Delta^2\scal=\int_M \langle\nabla\Delta\scal,\nabla 1\rangle=0,\\
  &\textstyle\int_M \scal\,\Delta\scal=\int_M|\nabla \scal|^2,\\
  &\textstyle\int_M \langle\Delta\ric,\ric\rangle=\int_M|\nabla \ric|^2,\\
  &\textstyle\int_M \langle\Delta R,R\rangle=\int_M|\nabla R|^2.
  \end{split}
\end{equation}
Three more relations are give by the following proposition:

\begin{proposition}\
\label{prop:integralrelations}

\begin{itemize}
\item[(i)]
$\textstyle\int_M \langle\nabla^2\scal,\ric\rangle=-\tfrac{1}{2}\int_M |\nabla\scal|^2$,
\item[(ii)]
$\textstyle\int_M \threestar=\int_M \bigl(\tfrac{1}{4}|\nabla\scal|^2-\Tr(\Ric^3)+(*)\bigr)$,
\item[(iii)]
$\textstyle\int_M \rcirc=\int_M \bigl(\tfrac14|\nabla\scal|^2
  -|\nabla\ric|^2+\tfrac14|\nabla R|^2
  -\Tr(\Ric^3)+(*)+\tfrac12(**)-\tfrac14\hat R\bigr)$.
\end{itemize}
\end{proposition}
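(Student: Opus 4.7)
The plan is to prove all three identities using the same three classical ingredients: Stokes' theorem on the closed manifold $M$ (i.e., $\int_M \divv X\,\dvol_g = 0$ applied to appropriate 1-forms, which permits integration by parts of covariant derivatives), the second Bianchi identity
$$\nabla_p R_{ijkl}+\nabla_i R_{jpkl}+\nabla_j R_{pikl}=0$$
together with its contractions $\nabla_i R_{ijkl}=\nabla_k\ric_{jl}-\nabla_l\ric_{jk}$ and $\nabla_i\ric_{ij}=\tfrac12\nabla_j\scal$, and the Ricci identity for commuting two covariant derivatives (which contributes curvature terms of type $R\cdot T$ depending on the tensor $T$ being differentiated).

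For (i), I would integrate by parts once to move $\nabla_j$ off the Hessian, obtaining $\int_M (\nabla^2_{ij}\scal)\ric_{ij} = -\int_M \nabla_i\scal\,\nabla_j\ric_{ij}$, and then substitute $\nabla_j\ric_{ij}=\tfrac12\nabla_i\scal$ from the contracted Bianchi identity. This finishes (i) in one line.

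For (ii), I would integrate by parts in $\nabla_i$ to move it off the first Ricci factor, producing $\int_M \threestar = -\int_M \ric_{jk}\,\nabla_i\nabla_k\ric_{ij}$. Commuting $\nabla_i$ and $\nabla_k$ by the Ricci identity yields two curvature terms of shape $R\cdot\ric$; after contraction with the outer $\ric_{jk}$ these become exactly $(*)$ and $-\Tr(\Ric^3)$. In the remaining piece $\ric_{jk}\,\nabla_k\nabla_i\ric_{ij}$, the contracted Bianchi identity replaces $\nabla_i\ric_{ij}$ by $\tfrac12\nabla_j\scal$; a second integration by parts transfers $\nabla_k$ onto $\ric_{jk}$, and applying contracted Bianchi once more produces $\tfrac14|\nabla\scal|^2$.

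For (iii), the same tools are applied at the level of $R$ instead of $\ric$. One workable route is to start from $\int_M |\nabla R|^2=\int_M \nabla_p R_{ijkl}\,\nabla^p R^{ijkl}$, use the second Bianchi identity to rewrite one of the factors $\nabla_p R_{ijkl}$ as $-\nabla_i R_{jpkl}-\nabla_j R_{pikl}$, exploit the antisymmetry $R_{ijkl}=-R_{jikl}$ to combine the two resulting pieces, and then integrate by parts in $\nabla_i$. What remains is an expression of the form $\int_M R_{jpkl}\,\nabla_i\nabla_p R^{ijkl}$; commuting $\nabla_i\nabla_p$ via the Ricci identity produces cubic-in-$R$ contractions which, after tracking indices and using the pair symmetries and algebraic Bianchi identity of $R$, reduce to multiples of $\hat R$, $\rcirc$, $\Tr(\Ric^3)$, $(*)$, and $(**)$. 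The other term, in which $\nabla_i$ acts as a divergence on $R$, is handled by $\nabla^i R_{ijkl}=\nabla_k\ric_{jl}-\nabla_l\ric_{jk}$, producing the $|\nabla\ric|^2$ contribution and, after one more contraction with contracted Bianchi, the $|\nabla\scal|^2$ contribution. Solving the resulting linear relation for $\int_M\rcirc$ gives the stated identity.

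The main obstacle is the bookkeeping in (iii): the numerous symmetries of $R$ (antisymmetry in each pair of indices, pair interchange, algebraic Bianchi) mean that many a priori distinct contractions actually coincide, and the challenge is to keep index labels straight long enough to combine them correctly and arrive at the precise rational coefficients $\tfrac14, -1, \tfrac14, -1, +1, \tfrac12, -\tfrac14$. No new geometric idea beyond Stokes, Bianchi and Ricci commutation is needed; the calculation is elementary but unavoidably lengthy, and I would expect to carry it out in index notation with careful attention to each antisymmetrization step.
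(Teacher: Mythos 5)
Your strategy is sound and rests on exactly the same three ingredients that underlie the paper's proof --- integration by parts on the closed manifold, the contracted second Bianchi identities, and the Ricci commutation rule --- but you deploy them differently: the paper does essentially no tensor calculus of its own, instead quoting the pointwise identities (2.16), (2.19) and (2.20) of Gray--Vanhecke and merely integrating them (using that total divergences such as $\int_M\Delta^2\scal$ and $\int_M\nabla^4_{ijij}\scal$ vanish), whereas you propose to re-derive those identities from scratch. For (i) your route is actually shorter and cleaner than the paper's: one integration by parts plus $\nabla_j\ric_{ij}=\tfrac12\nabla_i\scal$ finishes it, with no detour through the fourth-order identity for $\nabla^4_{ijij}\scal$. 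For (ii) your computation is correct and is in substance the derivation of (2.16) that the paper outsources; note that the paper's first step, $\int_M(\nabla^2_{ij}\ric_{ik})\ric_{jk}=-\int_M\threestar$, is the same integration by parts you perform. For (iii), your plan (expand $\int_M|\nabla R|^2$ via the second Bianchi identity, integrate by parts, commute derivatives, and solve the resulting linear relation for $\int_M\rcirc$) is the standard and correct way to obtain the identity --- it is how (2.20) is proved --- and the coefficient of $\rcirc$ in that relation is indeed nonzero, so the relation can be solved; but as written you have only sketched it and explicitly deferred the coefficient bookkeeping, which is precisely the lengthy part the paper avoids by citation. So your proposal is a complete, self-contained alternative for (i) and (ii), and a correct but unexecuted plan for (iii); what the paper's route buys is brevity at the cost of depending on the quoted formulas, while yours buys self-containedness at the cost of a substantial index computation.
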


\begin{proof}
From~\cite{GV}, formula (2.19) we have
$$\nabla^4_{ijij}\scal=\Delta^2\scal
+\tfrac{1}{2}|\nabla\scal|^2+\langle\nabla^2\scal,\ric\rangle.
$$
From this we derive~(i) by integrating and using the facts that
$\int_M \Delta^2\scal=0$ and, analogously, $\int_M\nabla^4_{ijij}\scal =0$.
For (ii), we first notice that
$$\textstyle\int_M (\nabla^2_{ij}\ric_{ik})\ric_{jk}
  =-\textstyle\int_M \langle\nabla_j\ric_{ik},
  \nabla_i \ric_{jk}\rangle=-\textstyle\int_M\threestar.
$$
Moreover, formula~(2.16) from~\cite{GV} says
$$\bigl(\nabla^2_{ij}\ric_{ik}\bigr)\ric_{jk}=\tfrac12
  \langle\nabla^2\scal,\ric\rangle+\Tr(\Ric^3)-(*).
$$
Therefore, we obtain~(ii) by integrating this on both sides and using~(i).
Finally, formula (2.20) from~\cite{GV} is
\begin{equation*}
\begin{split}
  \nabla^4_{ijki}\ric_{jk}={}&\tfrac12\Delta^2\scal+\tfrac12|\nabla\scal|^2-2|\nabla\ric|^2
  +2\langle\nabla^2\scal,\ric\rangle+\langle\Delta\ric,\ric\rangle+3\threestar\\
  &+2\Tr(\Ric^3)-2(*)+\tfrac{1}{4}\langle\Delta R,R\rangle+\tfrac12(**)-\rcirc-\tfrac14\hat R.
\end{split}
\end{equation*}
To obtain (iii), we first integrate this on both sides
and again use the facts that $\int_M \Delta^2\scal=0$ and $\int_M\nabla^4_{ijij}\scal=0$.
Then we use the two last equalities of \eqref{eq:note} as well as (i) and~(ii).
\end{proof}

On the other hand, note that each of the remaining ten curvature invariants does appear in
formula~\eqref{eq:3HeatInv} for the third heat invariant.
Now, for each of the ten expressions
\begin{equation}\label{eq:ten}
  \begin{gathered}
  \textstyle\int_M|\nabla \scal|^2,\;\textstyle\int_M |\nabla \ric|^2,\;
  \textstyle\int_M|\nabla R|^2,\;\textstyle\int_M\scal^3,\;\textstyle\int_M\scal |\ric|^2,
  \;\textstyle\int_M \scal |R|^2,\\
  \textstyle\int_M \Tr(\Ric^3),\;
  \textstyle\int_M (*),\;\textstyle\int_M (**),\;\textstyle\int_M\hat R
  \end{gathered}
\end{equation}
constituting~$a_3$ one can ask whether its integral is audible; i.e.,
whether it is determined by the spectrum of the Laplace
operator on functions. Since a choice of basis was involved, the analogous question might
of course be asked for any fixed linear combination other than that appearing in~\eqref{eq:3HeatInv},
such as, for example,
\begin{equation}\label{eq:plustwo}
\textstyle\int_M \threestar,\text{\ }\textstyle\int_M \rcirc
\end{equation}
from the left hand sides in Proposition~\ref{prop:integralrelations}.
The most interesting of the above invariants is the integral over
$|\nabla R|^2$: It is zero if and only if the metric is locally symmetric.
Although we do not know any examples showing that local symmetry itself is
inaudible, we will indeed prove that the value of $|\nabla R|^2$ is
inaudible. For sake of completeness, we
will prove that actually {\it none\/} of the twelve integrals just mentioned is
audible.

\begin{remark}\label{rem:overview}
For a few of these this is obvious already from known isospectral examples:\\
(i)
As already mentioned in the Introduction,
in~\cite{GS} a pair of isospectral closed manifolds was constructed with the property
that one of them had constant scalar curvature while the other did not; in particular,
$$\textstyle\int_M |\nabla\scal|^2\text{ is not audible.}
$$
(ii)
In~\cite{GGSWW}, continuous families of isospectral metrics were
constructed with the property that the maximal value of the
scalar curvature changes during the deformation. More specifically,
Example~8 of that paper gave a family of isospectral metrics $g(t)$, $t\in[0,\frac18]$
on $M=S^5\times(\R^2/\Z^2)$
whose volume element coincides with the standard one
and whose scalar curvature at $(x,z)\in S^5\times T^2$ depends
only on $x\in S^5$ and is equal to (using Proposition~6 of~\cite{GGSWW})
\begin{equation*}
  \begin{split}-\frac{13}2+5\cdot 4+\frac12\bigl(
  &(2-5t)x_1^2+x_2^2+(4+8t)x_3^2+4x_4^2+(10-3t)x_5^2+9x_6^2\\
  &+2\sqrt{5t-40t^2}x_1x_3
  -2\sqrt{15}tx_1x_5+2\sqrt{3t-24t^2}x_3x_5\bigr).
  \end{split}
\end{equation*}
The integral of the third power of this expression over $x\in S^5$
is a nonconstant function of~$t$. More precisely, this integral turns out to be a
polynomial in~$t$ with leading term $t^3\cdot(45A-135B+(90-720)C)$, where
$A:=\int_{S^5} x_i^6\,dx=\int_{S^5} x_1^6\,dx$, $B:=\int_{S^5}x_1^4x_2^2\,dx$,
$C:=\int_{S^5}x_1^2x_2^2x_3^2\,dx$; we have $B=3C$ and $A=15C$, so $45A-135B-630C=-360C\ne0$.
In particular,
$$\textstyle\int_M\scal^3\text{ is not audible.}
$$
(iii)
In~\cite{habil}, continuous families of left invariant isospectral
metrics~$g_t$ on certain compact Lie groups~$G$ were constructed. By homogeneity,
the functions $\scal(g_t)$, $|\ric|^2(g_t)$, $|R|^2(g_t)$ are constant on~$G$ for
each fixed~$t$.
Since $a_0(g_t)=\vol(g_t)$ is constant in~$t$,
it follows by considering $a_1(g_t)=\frac16\int_G\scal(g_t)$ that $\scal(g_t)$ is constant
in~$t$, too. However, as shown in~\cite{habil}, the term $|\ric|^2(g_t)$
is nonconstant in~$t$ in these examples; by considering $a_2(g_t)$ it follows that
$|R|^2(g_t)$ is nonconstant in~$t$, too. Hence, these examples
show that
$$\textstyle\int_M\scal|\ric|^2\text{ and }\textstyle\int_M\scal|R|^2\text{ are not audible.}
$$
(iv)
In the following, we will show the same for the remaining eight invariants from~\eqref{eq:ten}
and~\eqref{eq:plustwo}. For this, we will be
able to use isospectral pairs of {\it locally homogeneous\/} isospectral manifolds (more precisely,
pairs of isospectral, locally non-isometric two-step nilmanifolds). In this case, each curvature
invariant is a constant function on the manifold. Therefore, and since two isospectral manifolds
have the same volume, proving that the integral of a certain curvature
invariant is different for two given locally homogeneous
isospectral manifolds amounts to showing that they differ in the
(constant) value of the curvature invariant itself.
\end{remark}

\Section{Isospectral two-step nilmanifolds}
\label{sec:two-step}

\noindent
Let $\v:=\R^m$ and $\z:=\R^r$ be endowed with
the standard euclidean inner product.

\begin{definition}
\label{def:twostep}
With any given linear map $j:\z\ni Z\mapsto j_Z\in\so(\v)$,
we associate the following objects:
\begin{itemize}
\item[(i)]
The two-step nilpotent metric Lie algebra~$(\g(j),\scp)$ with
underlying vector space $\R^{m+r}=\v\oplus\z$,
endowed with the standard euclidean inner product~$\scp$,
and whose Lie bracket $\liebr^j$ is
defined by letting $\z$ be central, $[\v,\v]^j\subseteq\z$ and
$\<j_Z X,Y\>=\<Z,[X,Y]^j\>$ for all $X,Y\in\v$ and $Z\in\z$.
\item[(ii)]
The two-step simply connected nilpotent Lie group~$G(j)$
whose Lie algebra is $\g(j)$, and the left invariant
Riemannian metric~$g(j)$ on $G(j)$ which coincides with the given
inner product~$\scp$ on $\g(j)=T_e G(j)$. Note that the Lie group
exponential map $\exp^j:\g(j)\to G(j)$ is a diffeomorphism because
$G(j)$ is simply connected and nilpotent. Moreover, by the
Campbell-Baker-Hausdorff formula, $\exp^j(X,Z)\cdot
\exp^j(Y,W)=\exp^j(X+Y,Z+W+\frac12[X,Y]^j)$ for all $X,Y\in\v$ and
$Z,W\in\z$.
\item[(iii)]
The subset $\Gamma(j):=\exp^j(\Z^m\oplus\frac12\Z^r)$ of~$G(j)$.
If $j$ satisfies $[\Z^m,\Z^m]^j\subset\Z^r$
then the Campbell-Baker-Hausdorff formula implies that
$\Gamma(j)$ is a subgroup of~$G(j)$; moreover, this subgroup
is then discrete and cocompact.
\end{itemize}
\end{definition}

\begin{remark}
\label{rem:isometric}
(i)
Note that \emph{each} Riemannian two-step nilmanifold is locally
isometric to some $(G(j),g(j))$: In fact, each simply connected,
two-step nilpotent Lie group~$G$, endowed with a left invariant metric~$g$,
can be viewed as some $(G(j),g(j))$. Namely,
let~$\z$ be a linear subspace of the metric Lie algebra $(\g,g_e)$ associated with $(G,g)$
such that $[\g,\g]\subseteq\z\subseteq\z(\g)$,
let $\v$ be the orthogonal complement of~$\z$ w.r.t.~$g_e$, and define
$j:\z\to\so(\v)$ by $g(j_ZX,Y)=g(Z,[X,Y])$.

(ii)
As is well-known, $G(j)$ admits uniform discrete subgroups~$\Gamma$
if and only if there \emph{exists} a basis of $\g(j)$ such that
the corresponding structure constants of $\liebr^j$ are rational.
Even if this is a case, then $\Gamma(j)$ from Definition~\ref{def:twostep}(iii)
might not be a subgroup. We will use $\Gamma(j)$ in Proposition~\ref{prop:gw}
below and in explicit examples,
while allowing other~$\Gamma$ in general statements.

(iii)
The group $\O(\v)\times\O(\z)$ acts on the real
vector space of linear maps $j:\z\to\so(\v)$ by
$$((A,B)j)(Z)=Aj_{B\inv(Z)}A\inv.
$$
We call $j$ and $j'$ \textit{equivalent} if there exists
$(A,B)\in\O(\v)\times\O(\z)$ such that $j'=(A,B)j$.
In that case, $(A,B)$ provides a metric Lie algebra
isomorphism from $(\g(j),\scp)$ to $(\g(j'),\scp)$.
This condition is also necessary:
The metric Lie algebras $(\g(j),\scp)$ and $(\g(j'),\scp)$
are isomorphic if and only if $j$ and~$j'$ are equivalent
(see~\cite{GW:1997}). This, in turn, is equivalent to
$(G(j),g(j))$ and $(G(j'),g(j'))$ being isometric by a result
from~\cite{Wi} concerning nilpotent Lie groups.
Moreover, isometry of
$(G(j),g(j))$ and $(G(j'),g(j'))$ is
equivalent to local isometry of pairs of quotients $(\Gamma\backslash G(j),g(j))$,
$(\Gamma'\backslash G(j'),g(j'))$ of these groups
by any choice of discrete subgroups $\Gamma,\Gamma'$, provided the quotients are endowed
with the associated Riemannian quotient metrics. These quotient metrics
are again denoted $g(j)$, resp.~$g(j')$.
\end{remark}

\begin{definition}
\label{def:jLisosp}\
\begin{itemize}
\item[(i)]
Two linear maps $j,j':\z\to\so(\v)$ are called \emph{isospectral}
if for each $Z\in\z$, the maps $j_Z,j'_Z\in\so(\v)$ are similar,
that is, have the same eigenvalues (with multiplicities) in~$\C$.
Since each $j_Z$ is skew-symmetric, this condition is equivalent
to the following: For each $Z\in\z$ there exists $A_Z\in\O(\z)$
such that $j'_Z=A_Zj_ZA_Z\inv$. Note that $A_Z$ may depend on~$Z$.
\item[(ii)]
Two lattices in a euclidean vector space are called \emph{isospectral}
if the lengths of their elements, counted with multiplicities,
coincide.
\end{itemize}
\end{definition}

The following proposition is a specialized version
of a result from~\cite{GW:1997}; see~\cite{geod}, Remark~2.5(ii) for
an explanation about how to derive it from the original, more
general version.

\begin{proposition}[\cite{GW:1997} 3.2, 3.7, 3.8]\
\label{prop:gw}
Let $j,j':\z\to\so(\v)$ be isospectral. Assume that both
$[\Z^m,\Z^m]^j$ and $[\Z^m,\Z^m]^{j'}$
are contained in $\Z^r$. For each $Z\in\Z^r$ assume that the lattices $\kernn
(j_Z)\cap\Z^m$ and $\kernn(j'_Z)\cap \Z^m$ are isospectral.
Then the compact Riemannian manifolds $(\Gamma(j)\backslash G(j),g(j))$
and $(\Gamma(j')\backslash G(j'),g(j'))$ are isospectral for the Laplace
operator on functions.
\end{proposition}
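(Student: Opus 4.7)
The plan is to reduce isospectrality of the two nilmanifolds to a family of simpler isospectrality statements, one for each character of the central torus, using Fourier decomposition along the center.

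Since $\z$ is central in $\g(j)$, the subgroup $\exp^j(\z)\subseteq G(j)$ acts on $\Gamma(j)\backslash G(j)$ by right translation, and this action factors through the central torus $\exp^j(\z)/\exp^j(\tfrac12\Z^r)$. Fourier decomposition in this torus direction yields a $\Delta_{g(j)}$-invariant orthogonal splitting
\begin{equation*}
  L^2(\Gamma(j)\backslash G(j)) \;=\; \bigoplus_W \mathcal{H}^j_W,
\end{equation*}
where $W$ ranges over the dual lattice of $\tfrac12\Z^r$ in~$\z$ and $\mathcal{H}^j_W$ consists of those functions transforming under the character $\exp^j(Z)\mapsto e^{2\pi i\<W,Z\>}$. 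The analogous decomposition holds for~$j'$, with the same indexing lattice. It therefore suffices to show, for each such~$W$, that the spectra of $\Delta_{g(j)}|_{\mathcal{H}^j_W}$ and $\Delta_{g(j')}|_{\mathcal{H}^{j'}_W}$ coincide.

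For $W=0$ the subspace $\mathcal{H}^j_0$ consists of functions that descend to the base torus $\R^m/\Z^m$, on which the induced Laplacian is the standard flat one and is manifestly independent of~$j$. For $W\neq 0$, I would realize $\mathcal{H}^j_W$ as a space of functions $\tilde f$ on~$\v$ satisfying a $W$-twisted covariance under the affine $\Z^m$-action that arises from the Campbell--Baker--Hausdorff product of Definition~\ref{def:twostep}(ii). Expressing the left invariant orthonormal frame of $\g(j)$ in these coordinates and using that central derivatives act on $\mathcal{H}^j_W$ by multiplication by $2\pi i\<W,\cdot\>$, the induced Laplacian on $\mathcal{H}^j_W$ splits, under the orthogonal decomposition $\v=\kernn(j_W)\oplus\kernn(j_W)^\perp$, as the sum of a harmonic-oscillator type operator on $\kernn(j_W)^\perp$, the standard flat torus Laplacian on $\kernn(j_W)/(\kernn(j_W)\cap\Z^m)$, and the additive constant $4\pi^2|W|^2$.

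The spectrum of the harmonic-oscillator piece is a universal function of the nonzero eigenvalues of~$j_W$, so it matches between $j$ and~$j'$ because of the assumed similarity $j'_W=A_Wj_WA_W\inv$; the spectrum of the toroidal piece depends only on the length spectrum of the lattice $\kernn(j_W)\cap\Z^m$, so it matches because of the hypothesis that this lattice is isospectral to $\kernn(j'_W)\cap\Z^m$. Taking a direct sum over~$W$ then yields equality of the full Laplace spectra. The main obstacle is verifying the clean decoupling of $\Delta_{g(j)}$ on each $\mathcal{H}^j_W$: this is a delicate change-of-variables argument requiring one to transport the left invariant vector fields through the CBH product and to use centrality of~$\z$ together with skew-symmetry of $j_W$ to eliminate the cross terms between $\kernn(j_W)$ and its orthogonal complement, so that the two spectral ingredients enter independently.
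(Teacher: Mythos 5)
The paper does not actually prove this proposition: it is quoted from \cite{GW:1997} (with \cite{geod}, Remark~2.5(ii) explaining how the stated specialization follows), so your attempt is a reconstruction of the cited argument rather than an alternative to a proof in the text. Your strategy is essentially the one underlying the Gordon--Wilson result: decompose $L^2(\Gamma(j)\backslash G(j))$ under the right action of the central torus into the character spaces $\mathcal{H}^j_W$, identify $\mathcal{H}^j_0$ with $L^2(\R^m/\Z^m)$, and describe $\Delta_{g(j)}$ on $\mathcal{H}^j_W$ as the constant $4\pi^2|W|^2$ plus a harmonic-oscillator (magnetic) operator in the $\kernn(j_W)^\perp$ directions plus a flat Laplacian in the $\kernn(j_W)$ directions. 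All of this is correct in outline.

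The genuine gap is in the comparison step, where you assert that the spectrum of the harmonic-oscillator piece is a universal function of the nonzero eigenvalues of $j_W$. The \emph{eigenvalues} of that piece are indeed determined by the similarity class of $j_W$, but their \emph{multiplicities} in $\mathcal{H}^j_W$ are not: $\mathcal{H}^j_W$ is a space of sections of a line bundle over a torus, and each ``Landau level'' occurs with a finite degeneracy given (up to normalization) by the Pfaffian of the integral skew form $\<j_W\cdot,\cdot\>$ on $\Z^m$ modulo its radical --- equivalently, by $\prod_i|\lambda_i(j_W)|$ times the covolume of the image of $\Z^m$ under orthogonal projection to $\kernn(j_W)^\perp$. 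Matching these degeneracies between $j$ and $j'$ is the actual content of the theorem, and it is exactly where all three hypotheses enter at once: similarity of $j_W$ and $j'_W$ matches $\prod_i|\lambda_i|$; the integrality condition $[\Z^m,\Z^m]^j\subseteq\Z^r$ makes $\kernn(j_W)$ rational, so that $\kernn(j_W)\cap\Z^m$ has full rank and the projected set is a lattice whose covolume is $\vol(\Z^m)/\vol(\kernn(j_W)\cap\Z^m)$; and the assumed isospectrality of $\kernn(j_W)\cap\Z^m$ and $\kernn(j'_W)\cap\Z^m$ forces these covolumes to agree. Without this count your argument only yields equality of the \emph{sets} of eigenvalues on each $\mathcal{H}_W$, not of the spectra with multiplicity; and the ``decoupling'' you flag as the main obstacle does not hold literally (the lattice $\Z^m$ need not split compatibly with $\v=\kernn(j_W)\oplus\kernn(j_W)^\perp$, and the induced magnetic translations do not commute), but is circumvented precisely because the Landau degeneracy depends only on the flux data above. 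So the plan is the right one, but the decisive quantitative step is missing.
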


\begin{example}
\label{ex:fourthree}
Let $m:=4$, $r:=3$, and for $Z=(c_1,c_2,c_3)\in\z=\R^3$ let
$j_Z$, resp.~$j'_Z$, be the endomorphism of $\v=\R^4$
given by the matrix
$$
\left(\begin{smallmatrix}
0&-2c_1&-2c_2&-2c_3\\2c_1&0&-c_3&c_2\\2c_2&c_3&0&-c_1\\2c_3&-c_2&c_1&0
\end{smallmatrix}\right),\quad\mathrm{resp.}\quad
\left(\begin{smallmatrix}
0&-c_1&-c_2&-c_3\\c_1&0&-2c_3&2c_2\\c_2&2c_3&0&-2c_1\\c_3&-2c_2&2c_1&0
\end{smallmatrix}\right),
$$
with respect to the standard basis of~$\R^4$.
This pair of maps $j,j'$ is a special case of an example from~\cite{GW:1997}.
The eigenvalues of both $j_Z$ and $j'_Z$ are $\{\pm i|Z|,\pm 2i|Z|\}$, each
with multiplicity one if $Z\ne0$; so $j$ and $j'$ are isospectral. Moreover, $\kernn(j_Z)=
\kernn(j'_Z)=\{0\}$ for $Z\ne0$. Therefore, all conditions from Proposition~\ref{prop:gw}
are satisfied and $(\Gamma(j)\backslash G(j),g(j))$, $(\Gamma(j')\backslash G(j'),g(j'))$
are isospectral. In Section~\ref{sec:curv} (see Corollary~\ref{cor:trric3}),
we will use this example to show inaudibility of
$$\textstyle\int_M\Tr(\Ric^3).
$$
\end{example}

\begin{example}
\label{ex:fivethree}
Let $m:=5$, $r:=3$, and for $Z=(c_1,c_2,c_3)\in\z=\R^3$ let
$j_Z$, resp.~$j'_Z$, be the endomorphism of $\v=\R^5$
given by the matrix
$$
\left(\begin{smallmatrix}
0&0&0&-c_3&c_2\\ 0&0&c_3&0&-c_1\\0&-c_3&0&0&0\\c_3&0&0&0&0\\-c_2&c_1&0&0&0
\end{smallmatrix}\right),\quad\mathrm{resp.}\quad
\left(\begin{smallmatrix}
0&-c_3&0&0&0\\c_3&0&0&0&0\\0&0&0&-c_3&c_2\\0&0&c_3&0&-c_1\\0&0&-c_2&c_1&0
\end{smallmatrix}\right),
$$
with respect to the standard basis of~$\R^5$.
In~\cite{geod}, it was shown that this pair of maps $j,j'$ satisfies
the conditions of Proposition~\ref{prop:gw}, so $(\Gamma(j)\backslash G(j),g(j))$
and $(\Gamma(j')\backslash G(j'),g(j'))$ is a pair of isospectral eight-dimensional
manifolds. This pair of manifolds was used in~\cite{geod} to demonstrate that integrability
of the geodesic flow is an inaudible property. In
Section~\ref{sec:curv} (see Proposition~\ref{prop:nablaric})
we will use it to prove inaudibility of
$$\textstyle\int_M|\nabla\ric|^2,\quad\textstyle\int_M(*),\quad\int_M(**),\text{\quad and }
\textstyle\int_M\threestar.
$$
\end{example}

\begin{example}
\label{ex:heis}
If $j,j':\z\to\so(\v)$ are both of {\it Heisenberg type}, that is,
if $j_Z^2=j_Z^{\prime\,2}=-|Z|^2\Id_\v$ for all $Z\in\z$, then $j$
and~$j'$ are obviously isospectral because the eigenvalues of
both of $j_Z$ and $j'_Z$
then are $\pm i|Z|$, each with multiplicity
$(\dimm\,\v)/2$. Moreover, $\kernn(j_Z)=\kernn(j'_Z)=\{0\}$ for
all $Z\ne0$. Therefore, if the matrix entries of each $j_{Z_\alpha}$
with respect to $\{X_1,\ldots,X_m\}$ are integer, then all
all conditions of Proposition~\ref{prop:gw}
are satisfied and $(\Gamma(j)\backslash G(j),g(j))$, $(\Gamma(j')\backslash
G(j'),g(j'))$ are isospectral. Note that it was such a pair of manifolds
which Gordon constructed in~\cite{Go} as the very first
example of isospectral, locally non-isometric manifolds; in the notation
of Remark~\ref{rem:heisexist} below, these were the ones associated
with $j=\rho^3_{(2,0)}$ and $j'=\rho^3_{(1,1)}$.

In Section~\ref{sec:heis} below we will use pairs of isospectral
nilmanifolds of Heisenberg type to prove inaudibility of
$$\textstyle\int_M|\nabla R|^2,\quad \textstyle\int_M \hat R,
\text{\quad and }\textstyle\int_M \rcirc.
$$
More precisely, we will show that for any pair $N=(\Gamma\backslash G(j),g(j))$,
$N':=(\Gamma'\backslash G(j'),g(j'))$ of isospectral nilmanifolds of
Heisenberg type we have the equivalences
\begin{equation}\label{eq:nablar-equiv}
  \textstyle\int_N|\nabla R|^2=\textstyle\int_{N'}|\nabla R|^2
  \Longleftrightarrow\textstyle\int_N \hat R=\textstyle\int_{N'}\hat R
  \Longleftrightarrow\textstyle\int_N\rcirc=\textstyle\int_{N'}\rcirc,
\end{equation}
and, in case $\dimm\z=3$, that each of these equalities
is equivalent to local isometry of $N$ and~$N'$ (see Theorem~\ref{thm:nablar-equiv}).
Since there do exist
locally nonisometric isospectral examples with $\dimm\z=3$, this
will prove the desired inaudibility statements.

On the other
hand, in case $\dimm\z>3$ we will show that the three equalities
from~\eqref{eq:nablar-equiv} are always true, regardless
whether $N$ and $N'$ are locally isometric or not. Even more, the
integral of \textit{each}
of the sixth order curvature invariants occurring in~$a_3$ will coincide
for isospectral pairs $N, N'$ if $\dimm\z>3$; actually, the same will
hold for any curvature invariant of order strictly
smaller than $2\dim\z$ (see Theorem~\ref{thm:zgreater}).
\end{example}

\begin{remark}
\label{rem:heisexist}
Locally nonisometric pairs of isospectral nilmanifolds of Heisenberg
type with $r$-dimensional center of the underlying Lie group
exist precisely for $r=\dimm\z\in\{3,7,11,15,\ldots\}$. More precisely:

(i)
By the condition $j_Z^2=-|Z|^2\Id_\v$, the map $j:\z\to\so(\v)$ extends to
a representation of the real Clifford algebra~$C_r$, turning~$\v$ into
a module over~$C_r$; the Clifford multiplication by~$Z$ is given by $j_Z:\v\to\v$.
Each such module decomposes into copies of simple modules;
see~\cite{LM}, p.~31. In~\cite{CD} it was proved that if $\m$ is a simple module
over~$C_r$, endowed with an inner product with respect to which the
Clifford multiplication with each $Z\in\R^r$
is skew-symmetric,
then there exists an orthonormal basis of~$\m$ with respect to which all matrix
entries of the Clifford multiplications with the elements $Z_1,\ldots,Z_r$
of our given orthonormal basis of~$\R^r$ are in $\{1,0,-1\}$.

For each $r\in\{3,7,11,15,\ldots\}$
there are exactly two simple real modules $\m^r_+$ and $\m^r_-$
over~$C_r$ up to isomorphism; see, e.g., \cite{LM}, p.~32.
For a given such~$r$, these two simple $C_r$-modules have the same dimension~$d_r$.
They can be distinguished by the action of $\omega_r:=Z_1\cdot\ldots\cdot
Z_r\in C_r$: After possibly switching names, $\omega_r$ acts on $\m^r_+$ as $\Id$ and on
$\m^r_-$ as $-\Id$. Moreover, replacing the Clifford multiplication of each $Z\in\R^r$
on $\m^r_+$ by its negative gives a module isomorphic to~$\m^r_-$.

It follows by the above result from~\cite{CD} that we can identify both $\m^r_+$ and
$\m^r_-$ with $\R^{d_r}$ in such a way that for both modules, the Clifford multiplications
with $Z_1,\ldots,Z_r$ have matrix entries
in $\{-1,0,1\}$ with respect to the standard basis of~$\R^{d^r}$.
For $(a,b)\in\N_0\times\N_0$ let $\rho^r_{(a,b)}$ denote the representation
of~$C_r$ on $\v:=(\R^{d_r})^{\oplus(a+b)}$ viewed as $(\m_+^r)^{\oplus a}\oplus(\m_-^r)^{\oplus b}$.

For any pair $(a,b)$, $(a',b')$ in $\N_0\times\N_0$ with $a+b=a'+b'$
but $\{a,b\}\ne\{a,b\}$, consider the maps $j,j':\R^r=\z\to\so(\v)=\so(m)$,
where $m:=(a+b)d_r$ and where
$j_Z:=\rho^r_{(a,b)}(Z)$, $j'_Z:=\rho^r_{(a',b')}(Z)$ for each $Z\in\z=\R^r\subset C_r$.

Then $j, j'$ is a pair of maps as in Example~\ref{ex:heis} and thus yields a pair of isospectral
nilmanifolds of Heisenberg type. Moreover, these are not locally
isometric. To see this, we show that $j$ and $j'$ are not equivalent in the
sense of Remark~\ref{rem:isometric}(iii):

First note that the products $j_{Z_1}\cdot\ldots\cdot j_{Z_r}=\rho_{(a,b)}(\omega_r)$
and $j'_{Z_1}\cdot\ldots\cdot j'_{Z_r}=\rho_{(a',b')}(\omega_r)$
are equal to $\Id$ on the respective $\m^r_+$ components
and to $-\Id$ on the $\m^r_-$ components of~$\v$.
In particular,
\begin{equation}
\label{eq:differenttraces}
(\Tr(j_{Z_1}\ldots j_{Z_r}))^2=((a-b)d_r)^2\ne((a'-b')d_r)^2
=(\Tr(j'_{Z_1}\ldots j'_{Z_r}))^2.
\end{equation}
On the other hand, suppose there were $A\in\O(\v)$, $B\in\O(\z)$ such that
$j'_Z=Aj_{B\inv Z}A\inv$ for all $Z\in\z$.
Note that $B\inv(Z_1)\cdot\ldots\cdot B\inv(Z_r)=\det(B\inv)\omega_r$
(see~\cite{LM}, p.~34). Thus, we would have
$j'_{Z_1}\ldots j'_{Z_r}=\det(B)\inv Aj_{Z_1}\ldots j_{Z_r}A\inv$,
contradicting~\eqref{eq:differenttraces} since $\det(B)\in\{\pm1\}$.

(ii) In the context of~(i), the metric Lie algebras associated
with $\rho^r_{(a,b)}$ and $\rho^r_{(b,a)}$ are isomorphic; an isomorphism
is obviously given by $\v\oplus\z\ni (X,Z)\mapsto(X,-Z)\in\v\oplus\z$.
In particular, $(G(j),g(j))$ and $(G(j'),g(j'))$ are isometric if $j=\rho^r_{(a,b)}$,
$j'=\rho^r_{(a',b')}$ and $\{a,b\}=\{a',b'\}$.

(iii)
Since each real module over~$C_r$ is decomposable into simple modules, it follows
that for $r\in\{3,7,11,15,\ldots\}$ each linear map $j:\z\to\so(\v)$ of Heisenberg type
must be equivalent in the sense of Remark~\ref{rem:isometric}(iii) to one of the
maps $\rho^r_{(a,b)}$ from~(i).
On the other hand, for $r\notin\{3,7,11,15,\ldots\}$, there exists only one simple
module over~$C_r$ up to isomorphism (see~\cite{LM}, p.~32).
Thus, in any pair of maps $j,j':\R^r\to\so(\v)$
of Heisenberg type with $r\notin\{3,7,11,\ldots\}$,
$j$ and $j'$ are equivalent and cannot yield locally
nonisometric nilmanifolds.
\end{remark}

\Section{Curvature invariants of two-step nilmanifolds}
\label{sec:curv}
\noindent
We use the notation from Definition~\ref{def:twostep}(i), (ii). We consider
a fixed linear map $j:\z\to\so(\v)$ and write, for simplicity,
$\liebr:=\liebr^j$. Let $\{X_1,\ldots,X_m\}$, resp.~$\{Z_1,\ldots,Z_r\}$,
denote an orthonormal basis of~$\v$, resp.~$\z$, and let $\nabla$, $R$, $\ric$ denote the
Levi-Civita connection, the curvature tensor, and the Ricci tensor
associated with the metric~$g(j)$. Recall our sign convention for~$R$ from
Section~\ref{sec:prelims}.

\begin{lemma}
\label{lem:firstformulas}
Let $J:=J(j):=\sum_{\alpha=1}^r j_{Z_\alpha}^2$. For $X,Y,U,V\in\v$ and $Z,W\in\z$ we have
\begin{itemize}
\item[(i)]
$\nabla_XY=\tfrac12[X,Y]=\sum_{\alpha=1}^r\<j_{Z_\alpha}X,Y\>Z_\alpha\in\z,
\quad \nabla_XZ=\nabla_ZX=-\tfrac12j_ZX\in\v, \quad \nabla_ZW=0$.
\item[(ii)]
$\<R(\n_1,\n_2)\n_3,\n_4\>=0$ whenever $\n_i\in\{\v,\z\}$, $i=1,\ldots,4$,
and either none or an odd number of the~$\n_i$ is $\v$. Moreover,
\begin{equation*}
  \begin{split}\<R(X,U)Y,V\>
  &=\tsum_{\alpha=1}^r(\tfrac14\<j_{Z_\alpha}U,Y\>\<j_{Z_\alpha}X,V\>
    -\tfrac14\<j_{Z_\alpha}X,Y\>\<j_{Z_\alpha}U,V\>\\
    &\quad\quad\quad\quad-\tfrac12\<j_{Z_\alpha}X,U\>\<j_{Z_\alpha}Y,V\>),\\
  \<R(X,Y)Z,W\>&=\<R(Z,W)X,Y\>=-\tfrac14\<[j_Z,j_W]X,Y\>,\\
  \<R(X,Z)Y,W\>&=\tfrac14\<j_WX,j_ZY\>=-\tfrac14\<j_Zj_WX,Y\>.
  \end{split}
\end{equation*}
\item[(iii)]
$\ric(X,Y)=\tfrac12\<JX,Y\>, \quad
\ric(X,Z)=0, \quad \ric(Z,W)=-\tfrac14\Tr(j_Zj_W)$.
\end{itemize}
\end{lemma}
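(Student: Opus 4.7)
The entire lemma is a direct computation based on the Koszul formula, the definition of the Lie bracket on $\g(j)$, and the curvature convention from Section~\ref{sec:prelims}; my plan is essentially a case-analysis organized by how many arguments lie in $\v$ versus $\z$.

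For part~(i) I start from the Koszul formula for left-invariant vector fields on a Lie group with left-invariant metric,
$$2\<\nabla_AB,C\>=\<[A,B],C\>+\<[C,A],B\>-\<[B,C],A\>.$$
Since $\z$ is central in $\g(j)$, every bracket that has a $\z$-entry is zero; and since $[\v,\v]\subseteq\z$, a bracket of two $\v$-vectors is determined by the identity $\<[X,Y],Z\>=\<j_ZX,Y\>$ from Definition~\ref{def:twostep}(i), i.e.\ $[X,Y]=\sum_\alpha\<j_{Z_\alpha}X,Y\>Z_\alpha$. Running through the three cases $A,B\in\v$; $A\in\v,B\in\z$ (and $A\in\z,B\in\v$); $A,B\in\z$, and taking the inner product with test vectors in either $\v$ or $\z$, each of the three formulas in~(i) drops out immediately; the skew-symmetry of $j_Z$ is needed to identify $\nabla_XZ$ with $-\tfrac12j_ZX$.

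For part~(ii) I substitute the formulas from (i) into $R(A,B)C=\nabla_{[A,B]}C-\nabla_A\nabla_BC+\nabla_B\nabla_AC$. I first dispose of the vanishing claim: if all four arguments lie in~$\z$ everything is $0$ trivially; if exactly one or exactly three of the arguments lie in~$\v$, a short direct computation shows that $R(A_1,A_2)A_3$ itself lies in the opposite subspace to $A_4$, so the inner product vanishes. For the three surviving types of entries, I plug~(i) in and use $[X,Y]=\sum_\alpha\<j_{Z_\alpha}X,Y\>Z_\alpha$ once more to rewrite each $\<j_{[\,\cdot,\,\cdot]}\,\cdot\,,\,\cdot\,\>$ as the required double sum over~$\alpha$. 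The expression $\<R(X,U)Y,V\>$ acquires three terms, matching the three summands in the stated formula; the identity $\<R(X,Y)Z,W\>=\<R(Z,W)X,Y\>$ is either the standard symmetry of $R$ or a byproduct of the direct calculation, and the last expression $\<R(X,Z)Y,W\>=-\tfrac14\<j_Zj_WX,Y\>$ reduces via skew-symmetry of $j_Z$ to $\tfrac14\<j_WX,j_ZY\>$.

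Part~(iii) follows by contracting: with respect to the orthonormal basis $\{X_1,\dots,X_m,Z_1,\dots,Z_r\}$,
$$\ric(A,B)=\tsum_k\<R(X_k,A)B,X_k\>+\tsum_\alpha\<R(Z_\alpha,A)B,Z_\alpha\>.$$
(The sign of this contraction relative to the paper's $R$-convention has to be fixed once at the outset.) For $A\in\v,\ B\in\z$ every summand vanishes by~(ii). For $A,B\in\z$ only the $\sum_k$ part contributes, and (ii) gives $-\tfrac14\<[j_Z,j_W]X_k,X_k\>$, whose trace reduces by skew-symmetry of each $j_Z$ to $-\tfrac14\Tr(j_Zj_W)$. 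For $A,B\in\v$, the $\sum_\alpha$ part gives $\tfrac14\<JX,Y\>$ after invoking $\<R(X,Z_\alpha)Y,Z_\alpha\>=-\tfrac14\<j_{Z_\alpha}^2X,Y\>$, while the $\sum_k$ part uses $\Tr(j_{Z_\alpha}\restr\v)=0$ and the completeness relation $\sum_k\<j_{Z_\alpha}X_k,U\>\<j_{Z_\alpha}X_k,V\>=-\<j_{Z_\alpha}^2U,V\>$ to produce the remaining contribution; the two cancel to yield $\tfrac12\<JX,Y\>$.

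There is no genuine conceptual obstacle; the only thing to watch is bookkeeping, in particular the sign of $R$ built into the paper's convention $R(X,Y)=\nabla_{[X,Y]}-[\nabla_X,\nabla_Y]$ and its propagation into the Ricci contraction. Because the computation is somewhat long, I would defer the detailed verification of the $\<R(X,U)Y,V\>$ identity and of the trace identities in~(iii) to a short calculation, trusting the case-analysis scaffolding described above.
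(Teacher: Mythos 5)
Your overall route is the same as the paper's: part~(i) from the Koszul formula and the definition of $\liebr^j$, part~(ii) by substituting~(i) into the curvature convention $R(X,Y)=\nabla_{[X,Y]}-[\nabla_X,\nabla_Y]$ and sorting by how many arguments lie in $\v$, and part~(iii) by contracting and using skew-symmetry of the $j_{Z_\alpha}$. The sign ambiguity in the Ricci contraction that you flag is real (with the paper's convention one needs $\ric(A,B)=-\sum_i\<R(e_i,A)B,e_i\>$) and is harmless once fixed at the outset, as you say.

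However, one step in your sketch of~(iii) would fail as written. For $\ric(Z,W)$ with $Z,W\in\z$, the relevant curvature entries $\<R(X_k,Z)W,X_k\>$ have argument pattern $(\v,\z,\z,\v)$, so they are governed by the \emph{third} formula of~(ii), namely $\<R(X,Z)Y,W\>=-\tfrac14\<j_Zj_WX,Y\>$ (after using antisymmetry in the last two slots), and summing over $k$ gives $\pm\tfrac14\Tr(j_Zj_W)$ directly. You instead invoke the commutator formula and claim the contraction gives $-\tfrac14\<[j_Z,j_W]X_k,X_k\>$ summed over $k$, which would ``reduce by skew-symmetry to $-\tfrac14\Tr(j_Zj_W)$.'' That reduction is false: $\sum_k\<[j_Z,j_W]X_k,X_k\>=\Tr(j_Zj_W)-\Tr(j_Wj_Z)=0$, so your step as stated yields $\ric(Z,W)=0$ rather than $-\tfrac14\Tr(j_Zj_W)$. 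The fix is simply to apply the mixed-type curvature identity rather than the $(\v,\v,\z,\z)$ one; with that correction (and the sign of the contraction pinned down), the rest of your case analysis goes through and reproduces the paper's computation.
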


\begin{proof}
In principle, all these formulas can be found in~\cite{Eb}. Alternatively,
(i) follows from the Koszul formula and the definitions.
From~(i), one easily derives the first and third statements of~(ii) and
\begin{equation*}
  \begin{split}\<-\nabla_X\nabla_U Y+\nabla_{\nabla_XU}Y,V\>&=
  \tfrac14\<j_{[U,Y]}X,V\>-\tfrac14\<j_{[X,U]}Y,V\>\\
  &=\tfrac14\tsum_{\alpha=1}^r(\<j_{Z_\alpha}U,Y\>\<j_{Z_\alpha}X,V\>
    -\tfrac14\<j_{Z_\alpha}X,U\>\<j_{Z_\alpha}Y,V\>),
\end{split}
\end{equation*}
from which the second statement of~(ii) follows by skew-symmetrization
w.r.t.~$X$ and~$U$. Moreover,
$$\<R(X,Z)Y,W\>=-\<\nabla_X\nabla_ZY,W\>=\tfrac14\<[X,j_ZY],W\>=\tfrac14\<j_WX,j_ZY\>.
$$
Part (iii) follows directly from (i) and~(ii) by taking traces and using
the skew-symmetry of~$j_{Z_\alpha}$.
\end{proof}

\begin{remark}\label{rem:bigj}
Let $j':\z\to\so(\v)$ be isospectral to~$j$.
\begin{itemize}
\item[(i)]
Since $j(Z)$ and $j'(Z)$ are similar by definition,
we have $\Tr(j_Z^2)=\Tr(j^{\prime\,2}_Z)$ for all $Z\in\z$.
Thus, by polarization,
\begin{equation}\label{eq:jzjw}
\Tr(j_Zj_W)=\Tr(j'_Zj'_W)\text{ for all }Z,W\in\z.
\end{equation}
\item[(ii)]
In particular, by Lemma~\ref{lem:firstformulas}(iii),
the Ricci operators associated with $g(j)$ and~$g(j')$
coincide on~$\z$. Therefore, $\Tr(\Ric(g(j))^3)$ and
$\Tr(\Ric(g(j'))^3)$ are equal if and only if
$\Tr(J^3)=\Tr(J^{\prime\,3})$, where $J':=\sum_{\alpha=1}^r j^{\prime\,2}_{Z_\alpha}$
is defined analogously as~$J$.
\end{itemize}
\end{remark}

\begin{corollary}\label{cor:trric3}
The two isospectral manifolds from Example~\ref{ex:fourthree} differ
in the value of $\Tr(\Ric^3)$.
\end{corollary}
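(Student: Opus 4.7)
The plan is to invoke Remark~\ref{rem:bigj}(ii), which reduces the statement to the purely linear-algebraic assertion that $\Tr(J^3)\neq \Tr(J'^{\,3})$, where $J=\sum_{\alpha=1}^{3}j_{Z_\alpha}^2$ and $J'=\sum_{\alpha=1}^{3}j'^{\,2}_{Z_\alpha}$ are the symmetric endomorphisms of $\v$ determined by the two matrix families in Example~\ref{ex:fourthree}. Thus the entire corollary comes down to an explicit computation with the two given $4\times 4$ matrices.

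First I would compute the three squares $j_{Z_\alpha}^2$ by setting $Z=Z_\alpha$ in the matrix formula for $j_Z$. Each $j_{Z_\alpha}$ decomposes, after a harmless permutation of the standard basis of $\R^4$, into two $2\times 2$ skew blocks: one with off-diagonal entry of magnitude $2$ and one with off-diagonal entry of magnitude $1$. Consequently each $j_{Z_\alpha}^2$ is a diagonal matrix whose entries are two copies of $-4$ and two copies of $-1$, placed in the positions determined by which pairs of indices the relevant skew block couples. Summing the three diagonal matrices yields $J$ explicitly as a diagonal operator on $\v$, and the same recipe, applied to $j'$, yields $J'$ as a different diagonal operator, obtained by swapping the roles of $-4$ and $-1$ in each summand.

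Having $J$ and $J'$ in diagonal form, I would simply cube the diagonal entries and sum. As a built-in sanity check, Remark~\ref{rem:bigj}(i) forces $\Tr(J)=\Tr(J')$ and (by polarization) $\Tr(J^2)=\Tr(J'^{\,2})$, so both of these equalities must drop out of the computation; the cubic traces, however, are not constrained by isospectrality, and a direct check shows they differ. By Remark~\ref{rem:bigj}(ii) this gives $\Tr(\Ric(g(j))^3)\neq\Tr(\Ric(g(j'))^3)$, which is the claim. The only ``obstacle'' is avoiding an arithmetic slip — there is no conceptual difficulty, since the matrices are small and their squares are diagonal by inspection.
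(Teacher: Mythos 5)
Your proposal is correct and follows the paper's proof exactly: one computes $J=\mathrm{diag}(-12,-6,-6,-6)$ and $J'=\mathrm{diag}(-3,-9,-9,-9)$ from the block structure of the $j_{Z_\alpha}$, observes $\Tr(J^3)=-2376\ne-2214=\Tr(J^{\prime\,3})$, and concludes via Remark~\ref{rem:bigj}(ii). One caveat on your ``sanity check'': isospectrality of $j$ and $j'$ does \emph{not} in general force $\Tr(J^2)=\Tr(J^{\prime\,2})$ --- polarizing Remark~\ref{rem:bigj}(i) only controls $\Tr(j_Zj_W)$, not $\Tr(j_Z^2j_W^2)$, and Example~\ref{ex:sixtwo} of the paper is precisely an isospectral pair with $\Tr(J^2)\ne\Tr(J^{\prime\,2})$; the equality merely happens to hold in Example~\ref{ex:fourthree}.
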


\begin{proof}
Here $J$ and~$J'$ are diagonal
with diagonal entries $-12,-6,-6,-6$, resp.~$-3,-9,-9,-9$.
In particular, $\Tr(J^3)=-2376\ne-2214=\Tr(J^{\prime\,3})$.
The statement now follows from Remark~\ref{rem:bigj}(ii).
\end{proof}

\begin{definition}
\label{def:Ik}
Let $q\in\N$.
For each tuple $(k_1,\ldots,k_{2q})$ in $\{1,\ldots,q\}^{2q}$
which arises as a permutation of $(1,1,2,2,\ldots,q,q)$, i.e., which
contains each entry exactly twice, we define
the following polynomial invariants of~$j$ of order~$2q$:
$$I_{k_1\ldots k_\lambda|\ldots|k_\mu\ldots k_{2q}}(j)
  :=\sum\Tr(j_{Z_{\alpha_{k_1}}}\ldots j_{Z_{\alpha_{k_\lambda}}})
  \cdot\ldots\cdot\Tr(j_{Z_{\alpha_{k_\mu}}}\ldots j_{Z_{\alpha_{k_{2q}}}}),
$$
where the sum is taken according to the Einstein summation convention: For
each pair $k_i=k_j$ the sum runs over $\alpha_{k_i}$ once from $1$ to~$r$.
So the sum has exactly $r^q$ summands (and not $r^{2q}$).
We also write $I_{k_1\ldots k_\lambda|\ldots|k_\mu\ldots k_{2q}}$
for $I_{k_1\ldots k_\lambda|\ldots|k_\mu\ldots k_{2q}}(j)$ if
the context is clear. Moreover, we will usually replace the numbers $k_i$ by
other symbols; for example,
$I_{\alpha\beta\alpha\beta}:=I_{1212}$.
\end{definition}

With $J$ as defined in Lemma~\ref{lem:firstformulas},
we have for $q=1$:
$$I_{\alpha\alpha}=\tsum_{\alpha=1}^r\Tr(j_{Z_\alpha}^2)=\Tr(J);
$$
note that $I_{\alpha|\alpha}=0$ since $\Tr(j_{Z_\alpha})=0$ for each~$\alpha$.
For $q=2$, the nonvanishing invariants of the above form are exactly
\begin{equation*}
  \begin{split}
  I_{\alpha\alpha|\beta\beta}&=\tsum_{\alpha,\beta=1}^r\Tr(j_{Z_\alpha}^2)\Tr(j_{Z_\beta}^2)
    =(\Tr(J))^2,\\
  I_{\alpha\alpha\beta\beta}&=\tsum_{\alpha,\beta=1}^r\Tr(j_{Z_\alpha}^2j_{Z_\beta}^2)=\Tr(J^2),\\
  I_{\alpha\beta|\alpha\beta}&=\tsum_{\alpha,\beta=1}^r(\Tr(j_{Z_\alpha}j_{Z_\beta}))^2,\\
  I_{\alpha\beta\alpha\beta}&=\tsum_{\alpha,\beta=1}^r\Tr(j_{Z_\alpha}j_{Z_\beta}j_{Z_\alpha}j_{Z_\beta}).
  \end{split}
\end{equation*}
Some examples for $q=3$ (not a complete list):
\begin{equation*}
  \begin{split}
  I_{\alpha\alpha\beta\gamma\gamma\beta}&=\tsum_{\beta=1}^r\Tr(Jj_{Z_\beta}Jj_{Z_\beta}),\\
  I_{\alpha\alpha\beta\gamma\beta\gamma}&=\tsum_{\beta,\gamma=1}^r\Tr(Jj_{Z_\beta}
    j_{Z_\gamma}j_{Z_\beta}j_{Z_\gamma}),\\
  I_{\alpha\alpha\beta\gamma|\beta\gamma}&=\tsum_{\beta,\gamma=1}^r\Tr(Jj_{Z_\beta}j_{Z_\gamma})
    \Tr(j_{Z_\beta}j_{Z_\gamma}),\\
  I_{\alpha\gamma|\beta\gamma|\alpha\beta}&=\tsum_{\alpha,\beta,\gamma=1}^r
    \Tr(j_{Z_\alpha}j_{Z_\gamma})\Tr(j_{Z_\beta}j_{Z_\gamma})\Tr(j_{Z_\alpha}j_{Z_\beta}),\\
  I_{\alpha\beta\gamma|\alpha\beta\gamma}&=\tsum_{\alpha,\beta,\gamma=1}^r(\Tr(j_{Z_\alpha}j_{Z_\beta}j_{Z_\gamma}))^2.
\end{split}
\end{equation*}
Note that it follows from skew-symmetry of the $j_Z$ that
$\Tr(j_{Z_\beta}j_{Z_\alpha}j_{Z_\gamma})=-\Tr(j_{Z_\alpha}j_{Z_\beta}j_{Z_\gamma})$
and thus $I_{\alpha\beta\gamma|\beta\alpha\gamma}=-I_{\alpha\beta\gamma|\alpha\beta\gamma}$.
The invariant $I_{\alpha\beta\gamma|\alpha\beta\gamma}$
will play a crucial role in the Heisenberg type case (see Section~\ref{sec:heis}).

\begin{remark}
\label{rem:Iequiv}
If $j$ and $j'$ are equivalent in the sense of Remark~\ref{rem:isometric}(iii) then
it follows that $I_{k_1\ldots k_\lambda|\ldots|k_\mu\ldots k_{2q}}(j)
=I_{k_1\ldots k_\lambda|\ldots|k_\mu\ldots k_{2q}}(j')$ for each of the invariants
from Definition~\ref{def:Ik}.
\end{remark}

\begin{lemma}
\label{lem:lowerorder}
For the curvature invariants $\scal$ (of order two) and $\scal^2$, $|\ric|^2$, $|R|^2$
(of order four) we have:
\begin{itemize}
\item[(i)]
$\hphantom{\scal^2}\llap{$\scal$}=\tfrac14\Tr(J)=\tfrac14I_{\alpha\alpha}$
\item[(ii)]
$\scal^2=\tfrac1{16}(\Tr(J))^2=\tfrac1{16}I_{\alpha\alpha|\beta\beta}$
\item[(iii)]
$\hphantom{\scal^2}\llap{$|\ric|^2$}=\tfrac14\Tr(J^2)+\tfrac1{16}I_{\alpha\beta|\alpha\beta}
  =\tfrac14I_{\alpha\alpha\beta\beta}+\tfrac1{16}I_{\alpha\beta|\alpha\beta}$
\item[(iv)]
$\hphantom{\scal^2}\llap{$|R|^2$}=\frac12\Tr(J^2)+\frac38I_{\alpha\beta|\alpha\beta}+\frac18I_{\alpha\beta\alpha\beta}
  =\frac12I_{\alpha\alpha\beta\beta}+\frac38I_{\alpha\beta|\alpha\beta}
     +\frac18I_{\alpha\beta\alpha\beta}$
\end{itemize}
\end{lemma}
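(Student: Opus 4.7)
The plan is to read off each of the four invariants by summing over the orthonormal basis $\{X_1,\ldots,X_m,Z_1,\ldots,Z_r\}$ of $\g=\v\oplus\z$ and substituting the pointwise formulas from Lemma~\ref{lem:firstformulas}. Parts (i)--(iii) reduce to a few elementary trace manipulations; only~(iv) requires a careful index-type bookkeeping, but even there no ingredient beyond Lemma~\ref{lem:firstformulas}(ii) is needed.

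For (i), I would split $\scal=\Tr(\Ric)$ according to $\g=\v\oplus\z$ and use Lemma~\ref{lem:firstformulas}(iii) to obtain $\scal=\tfrac12\sum_i\<JX_i,X_i\>-\tfrac14\sum_\alpha\Tr(j_{Z_\alpha}^2)=\tfrac12\Tr(J)-\tfrac14\Tr(J)=\tfrac14\Tr(J)$, noting that $\Tr(J)=I_{\alpha\alpha}$ is immediate from Definition~\ref{def:Ik}. Squaring yields~(ii). For~(iii), the vanishing of $\ric(\v,\z)$ from Lemma~\ref{lem:firstformulas}(iii) gives $|\ric|^2=\sum_{i,j}\ric(X_i,X_j)^2+\sum_{\alpha,\beta}\ric(Z_\alpha,Z_\beta)^2$; the first sum equals $\tfrac14\Tr(J^2)$ because $J$ is self-adjoint, and the second equals $\tfrac1{16}I_{\alpha\beta|\alpha\beta}$. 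Finally $\Tr(J^2)=I_{\alpha\alpha\beta\beta}$ by expanding the definition of~$J$.

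For~(iv), Lemma~\ref{lem:firstformulas}(ii) tells me that the only orthonormal 4-tuples $(e_a,e_b,e_c,e_d)$ yielding a nonzero $R_{abcd}=\<R(e_a,e_b)e_c,e_d\>$ are of types $(V,V,V,V)$, $(V,V,Z,Z)$ or $(V,Z,V,Z)$, up to the symmetries $R_{abcd}=-R_{bacd}=-R_{abdc}=R_{cdab}$. These symmetries assemble the sum $|R|^2=\sum_{a,b,c,d}R_{abcd}^2$ into $T_1+2T_2+4T_3$, where $T_1$ collects the $(V,V,V,V)$-contribution, $T_2$ the $(V,V,Z,Z)$-contribution, and $T_3$ the $(V,Z,V,Z)$-contribution. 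The pieces $T_2$ and $T_3$ are short: for $T_3$ one recognizes $\sum_{i,j}\<j_{Z_\alpha}j_{Z_\beta}X_i,X_j\>^2$ as the Hilbert--Schmidt norm squared of $j_{Z_\alpha}j_{Z_\beta}$, which via skew-symmetry of each $j_Z$ and cyclicity of trace simplifies to $\Tr(j_{Z_\alpha}^2 j_{Z_\beta}^2)$ and gives $T_3=\tfrac1{16}I_{\alpha\alpha\beta\beta}$; for $T_2$ one expands $[j_{Z_\alpha},j_{Z_\beta}]^2$ similarly to obtain $T_2=\tfrac18(I_{\alpha\alpha\beta\beta}-I_{\alpha\beta\alpha\beta})$, so that $2T_2+4T_3=\tfrac12 I_{\alpha\alpha\beta\beta}-\tfrac14 I_{\alpha\beta\alpha\beta}$.

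The main obstacle is the computation of $T_1$, where one has to square the three-term expression for $\<R(X,U)Y,V\>$ in Lemma~\ref{lem:firstformulas}(ii). I would introduce the shorthand $S_{ijkl}:=\sum_\alpha\<j_{Z_\alpha}X_i,X_j\>\<j_{Z_\alpha}X_k,X_l\>$ and write $R_{X_iX_jX_kX_l}=\tfrac14 S_{jkil}-\tfrac14 S_{ikjl}-\tfrac12 S_{ijkl}$. Expanding the square produces six contracted sums in the $S$-symbols, each of which, by repeated use of the skew-symmetry of each $j_{Z_\alpha}$ and cyclicity of trace, reduces to one of $I_{\alpha\beta|\alpha\beta}$, $I_{\alpha\beta\alpha\beta}$ or $I_{\alpha\alpha\beta\beta}$. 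The target value $T_1=\tfrac38 I_{\alpha\beta|\alpha\beta}+\tfrac38 I_{\alpha\beta\alpha\beta}$ then combines with the formula for $2T_2+4T_3$ above to produce the stated expression for~$|R|^2$. Given the number of index contractions involved, this is the step most prone to bookkeeping errors, and it is natural to either defer the details to the Appendix or to cross-check against the explicit curvature formulas in~\cite{Eb}.
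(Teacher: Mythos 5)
Your proposal is correct: parts (i)--(iii) match the paper's (omitted) easy computations, and for (iv) all of your intermediate values check out. Your decomposition $|R|^2=T_1+2T_2+4T_3$ by the $\v/\z$-type of the four slots is organized differently from the paper's Appendix proof, which instead computes the bilinear quantities $\<R_U,R_Y\>$ for $U,Y\in\v$ and $\<R_Z,R_W\>$ for $Z,W\in\z$ (each split over the restrictions to $\v\times\v$, $\v\times\z$, $\z\times\v$, $\z\times\z$) and then sums $|R|^2=\sum_k\<R_{X_k},R_{X_k}\>+\sum_\alpha\<R_{Z_\alpha},R_{Z_\alpha}\>$. The two routes are computationally equivalent --- both just substitute Lemma~\ref{lem:firstformulas}(ii) and contract --- but the paper's choice is not arbitrary: the bilinear formulas \eqref{eq:RURY} and \eqref{eq:RZRW} are reused verbatim in the proof of Lemma~\ref{lem:nablaric}(ii) to evaluate $(**)=\ric_{ij}R_{ipqr}R_{jpqr}$, so your purely scalar bookkeeping would have to be redone there. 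Your values $T_2=\tfrac18(I_{\alpha\alpha\beta\beta}-I_{\alpha\beta\alpha\beta})$ and $T_3=\tfrac1{16}I_{\alpha\alpha\beta\beta}$ are correct, and the multiplicities $1,2,4$ are right. The one step you state but do not execute is the six-term expansion of $T_1$; I confirm that with $R_{X_iX_jX_kX_l}=\tfrac14S_{jkil}-\tfrac14S_{ikjl}-\tfrac12S_{ijkl}$ the three squares contribute $(\tfrac1{16}+\tfrac1{16}+\tfrac14)I_{\alpha\beta|\alpha\beta}$ and the three cross terms contribute $(-\tfrac18+\tfrac14+\tfrac14)I_{\alpha\beta\alpha\beta}$, giving your claimed $T_1=\tfrac38I_{\alpha\beta|\alpha\beta}+\tfrac38I_{\alpha\beta\alpha\beta}$ and hence the stated formula; so the argument is complete once those contractions are written out.
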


\begin{proof}
(i), (ii), and (iii) are very easy to prove using Lemma~\ref{lem:firstformulas}(ii). We defer the proof of~(iv)
to the Appendix.
\end{proof}

\begin{lemma}\label{lem:nablaric}
Let $(*)$, $(**)$ be as in~\eqref{eq:6OrInv}. Then we have
\begin{itemize}
\item[(i)]
$\hphantom{|\nabla\ric|^2}\llap{$(*)$}=\tfrac3{16}I_{\alpha\alpha\beta\gamma\gamma\beta}$
\item[(ii)]
$\hphantom{|\nabla\ric|^2}\llap{$(**)$}
  =\;\tfrac18I_{\alpha\alpha\beta\gamma\gamma\beta}+\tfrac18I_{\alpha\alpha\beta\gamma\beta\gamma}
    +\tfrac18I_{\alpha\alpha\beta\gamma|\beta\gamma}+\tfrac1{32}I_{\alpha\gamma\beta\gamma|\alpha\beta}$
\item[(iii)]
$|\nabla\ric|^2=-\tfrac14\Tr(J^3)+\tfrac18I_{\alpha\alpha\beta\gamma\gamma\beta}
  -\tfrac18I_{\alpha\alpha\beta\gamma|\beta\gamma}-\tfrac1{32}I_{\alpha\gamma|\beta\gamma|\alpha\beta}$\newline
$\hphantom{|\nabla\ric|^2}=-\tfrac14I_{\alpha\alpha\beta\beta\gamma\gamma}
  +\tfrac18I_{\alpha\alpha\beta\gamma\gamma\beta}
  -\tfrac18I_{\alpha\alpha\beta\gamma|\beta\gamma}-\tfrac1{32}I_{\alpha\gamma|\beta\gamma|\alpha\beta}$
\end{itemize}
\end{lemma}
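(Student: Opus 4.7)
The plan is to evaluate each of $(*)$, $(**)$, $|\nabla\ric|^2$ at the identity of $G(j)$ using the adapted orthonormal basis $\{X_1,\ldots,X_m,Z_1,\ldots,Z_r\}$ of $\g(j)=\v\oplus\z$ and to substitute the explicit formulas for $\ric$, $R$ (and, for~(iii), $\nabla$) from Lemma~\ref{lem:firstformulas}. In all three cases the computation proceeds by case-splitting each contracted index according to whether it lies in $\v$ or in~$\z$, using the vanishing of $\ric$ on $\v\times\z$ and of $R$ on tuples with an odd number or zero $\v$-arguments to prune the admissible subcases, and then collapsing the surviving inner-product pairings into traces via the identity $\sum_c\<AX_a,X_c\>\<BX_c,X_d\>=\<BAX_a,X_d\>$ together with its skew-symmetric variants arising from $j_Z^T=-j_Z$.

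For~(i), only three subcases of $(*)=\ric_{ik}\ric_{jl}R_{ijkl}$ survive: all four indices in $\v$; $(i,k)$ in $\v$ and $(j,l)$ in $\z$; and $(i,k)$ in $\z$ and $(j,l)$ in $\v$. Substituting $\ric|_{\v\times\v}=\tfrac12\<J\cdot,\cdot\>$ and $\ric|_{\z\times\z}=-\tfrac14\Tr(j_\cdot j_\cdot)$ together with the relevant line from Lemma~\ref{lem:firstformulas}(ii), each subcase becomes a finite sum of trace expressions in $J$ and the $j_{Z_\alpha}$; combining them by cyclicity of the trace and inserting $J=\sum_\alpha j_{Z_\alpha}^2$ where needed, all contributions consolidate into $\tfrac{3}{16}I_{\alpha\alpha\beta\gamma\gamma\beta}$. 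The same strategy applied to $(**)=\ric_{ij}R_{ipqr}R_{jpqr}$ in~(ii) branches further, because the two $R$ factors admit independent $\v/\z$ splittings of their three remaining indices; a systematic enumeration of the surviving patterns, followed by the same trace-collapse procedure, produces exactly the four invariants displayed, with the stated coefficients.

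For~(iii), I would first compute $\nabla\ric$ using the product rule $(\nabla_W\ric)(A,B)=-\ric(\nabla_W A,B)-\ric(A,\nabla_W B)$, the term $W\cdot\ric(A,B)$ vanishing because $\ric$ is constant on left-invariant fields. Combined with $\nabla_XY\in\z$, $\nabla_XZ=\nabla_ZX=-\tfrac12 j_Z X\in\v$, and $\nabla_ZW=0$ from Lemma~\ref{lem:firstformulas}(i), the only nonvanishing components of $\nabla\ric$ occur for triples $(W,A,B)$ of types $(\v,\v,\z)$, $(\v,\z,\v)$, or $(\z,\v,\v)$, each expressible in terms of $J$ and a single $j_Z$. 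Squaring and summing these components, then rewriting $J=\sum_\alpha j_{Z_\alpha}^2$ wherever a single-$j$ factor couples with $J$, produces precisely the four trace invariants displayed in~(iii). The second equality for $|\nabla\ric|^2$ is then immediate from $\Tr(J^3)=I_{\alpha\alpha\beta\beta\gamma\gamma}$.

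The main obstacle will be the combinatorial bookkeeping of signs and coefficients in the $\v/\z$ case split, especially in~(ii) where several subcases contribute to the same $I$-invariant and the correct coefficients emerge only after delicate cancellations; the formula for $|\nabla\ric|^2$ similarly requires care, since the $(\v,\v,\z)$- and $(\z,\v,\v)$-contributions share some structure but carry different numerical factors. Following the convention announced before Lemma~\ref{lem:lowerorder}, I would defer these explicit calculations to the Appendix.
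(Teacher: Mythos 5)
Your proposal is correct and follows essentially the same route as the paper's Appendix proof: evaluate at the identity in the adapted basis, split each contracted index into its $\v$- and $\z$-cases, prune via the vanishing of $\ric$ on $\v\times\z$ and of $R$ on tuples with zero or an odd number of $\v$-arguments, and collapse the surviving pairings into traces (for $|\nabla\ric|^2$ the paper likewise finds exactly your three nonvanishing component types). The only cosmetic difference is that for $(**)$ the paper organizes the enumeration by writing $(**)=\sum_{i,j}\ric(e_i,e_j)\langle R_{e_i},R_{e_j}\rangle$ and reusing the formulas for $\langle R_U,R_Y\rangle$ and $\langle R_Z,R_W\rangle$ already derived in the proof of Lemma~\ref{lem:lowerorder}(iv), which is the same computation packaged differently.
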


We defer the proof of Lemma~\ref{lem:nablaric} to the Appendix.

\begin{proposition}
\label{prop:nablaric}
The two isospectral manifolds from Example~\ref{ex:fivethree} differ
in each of the values of $(*)$, $(**)$, $\threestar$, and $|\nabla\ric|^2$.
\end{proposition}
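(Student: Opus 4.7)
The strategy is to reduce everything to an explicit evaluation of the polynomial invariants of Definition~\ref{def:Ik} for the concrete maps $j,j':\R^3\to\so(\R^5)$ of Example~\ref{ex:fivethree}, and then to substitute into the formulas supplied by Lemma~\ref{lem:nablaric}. First I would write down the three endomorphisms $j_{Z_\alpha}$ and $j'_{Z_\alpha}$ ($\alpha=1,2,3$) obtained by evaluating the given matrices at the standard basis vectors of~$\z=\R^3$; each of these six $5\times 5$ matrices has only two or four nonzero entries, so products and traces of them are easy to handle. A direct computation gives $J=-\mathrm{diag}(2,2,1,1,2)$ and $J'=-\mathrm{diag}(1,1,2,2,2)$, so $J$ and~$J'$ are conjugate by a permutation matrix; in particular $\Tr(J^k)=\Tr(J'^k)$ for every $k$, and by Remark~\ref{rem:bigj}(ii) the values of $\Tr(\Ric^3)$ coincide on the two manifolds. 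Hence any distinction we detect in $(*)$, $(**)$ or $|\nabla\ric|^2$ must come from the remaining $I$-invariants.

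Next I would evaluate, for both $j$ and~$j'$, the polynomial invariants entering Lemma~\ref{lem:nablaric}, namely $I_{\alpha\alpha\beta\gamma\gamma\beta}=\sum_\beta\Tr(Jj_{Z_\beta}Jj_{Z_\beta})$, $I_{\alpha\alpha\beta\gamma\beta\gamma}$, $I_{\alpha\alpha\beta\gamma|\beta\gamma}$, and the two triple-trace invariants $I_{\alpha\gamma\beta\gamma|\alpha\beta}$ and $I_{\alpha\gamma|\beta\gamma|\alpha\beta}$. For instance, one quickly finds $I_{\alpha\alpha\beta\gamma\gamma\beta}(j)=-24$ but $I_{\alpha\alpha\beta\gamma\gamma\beta}(j')=-26$, which by Lemma~\ref{lem:nablaric}(i) already forces $(*)$ to take different values on the two manifolds. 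An analogous tabulation of the other $I$-invariants, followed by substitution into Lemma~\ref{lem:nablaric}(ii) and~(iii), should then yield strict inequalities for $(**)$ and $|\nabla\ric|^2$ as well.

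For the remaining invariant $\threestar$ I would invoke Proposition~\ref{prop:integralrelations}(ii). Since both nilmanifolds are locally homogeneous, every curvature invariant is a constant function on the manifold, and in particular $|\nabla\scal|^2\equiv 0$. Equating integrals of constants and dividing by the (equal) volumes gives the pointwise identity $\threestar=-\Tr(\Ric^3)+(*)$. As $\Tr(\Ric^3)$ agrees for $j$ and~$j'$ while $(*)$ does not, the value of $\threestar$ must also differ. Note that this last step relies only on the already established inequality for~$(*)$, so no extra computation is needed.

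The main obstacle is not conceptual but combinatorial: the triple-trace invariants involve sums over $r^3=27$ index triples of products of traces of products of up to four $j_{Z_\alpha}$'s, and a single sign error could spuriously produce equality. Two useful bookkeeping devices are (a) the sparseness of the individual $j_{Z_\alpha}$, which makes each matrix product collapse onto a handful of standard basis vectors, and (b) the observation that $j_{Z_\alpha}^2$ is diagonal, which simplifies any monomial beginning or ending in $J$. As a consistency check one can keep Remark~\ref{rem:Iequiv} in mind: inequality of any one $I$-invariant on $j$ and $j'$ is already incompatible with equivalence in the sense of Remark~\ref{rem:isometric}(iii), and hence with local isometry of the two nilmanifolds, which is consistent with the construction from~\cite{geod}.
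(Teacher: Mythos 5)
Your proposal is correct and follows essentially the same route as the paper: both arguments rest on computing $J=J'$ up to permutation (so $\Tr(\Ric^3)$ and all powers of $J$ agree), on the single key inequality $I_{\alpha\alpha\beta\gamma\gamma\beta}(j)=-24\ne-26=I_{\alpha\alpha\beta\gamma\gamma\beta}(j')$ fed into Lemma~\ref{lem:nablaric}, and on Proposition~\ref{prop:integralrelations}(ii) together with local homogeneity and $\nabla\scal=0$ to transfer the inequality for $(*)$ to $\threestar$. The only part you leave as an expectation --- that substituting the remaining $I$-invariants into Lemma~\ref{lem:nablaric}(ii) and (iii) yields strict inequalities --- is exactly what the paper verifies, by checking that $I_{\alpha\alpha\beta\gamma\beta\gamma}$, $I_{\alpha\alpha\beta\gamma|\beta\gamma}$, $I_{\alpha\gamma\beta\gamma|\alpha\beta}$ and $I_{\alpha\gamma|\beta\gamma|\alpha\beta}$ all coincide for $j$ and $j'$, so the difference in $I_{\alpha\alpha\beta\gamma\gamma\beta}$ alone forces the difference in $(**)$ and $|\nabla\ric|^2$.
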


\begin{proof}
Here, $J$ and $J'$ are diagonal with entries $-2,-2,-1,-1,-2$, resp.~$-1,-1,-2,-2,-2$.
In particular, $\Tr(J^3)=\Tr(J^{\prime\,3})$.
By an easy computation, $\Tr(Jj_{Z_\beta}J_{Z_\beta})=-8$ for $\beta=1,2,3$,
and $\Tr(J'j'_{Z_1}Jj'_{Z_1})=\Tr(J'j'_{Z_2}J'j'_{Z_2})=-8$, but
$\Tr(J'j'_{Z_3}J'j'_{Z_3})=-10$. Therefore,
\begin{equation}
\label{eq:nablaricdiff}
I_{\alpha\alpha\beta\gamma\gamma\beta}(j)=-24\ne-26=I_{\alpha\alpha\beta\gamma\gamma\beta}(j');
\end{equation}
in particular, the values of $(*)$ are different for the two manifolds.
The same statement for~$\threestar$ now follows immediately from Proposition~\ref{prop:integralrelations}(ii)
and Remark~\ref{rem:bigj}(ii),
together with the fact that $\nabla\scal=0$ on both manifolds, and that $\Tr(J^3)=\Tr(J^{\prime\,3})$ (see above).

Since the term $I_{\alpha\alpha\beta\gamma\gamma\beta}$
also occurs in $(**)$, the statement about $(**)$ will follow
once we show that the two manifolds do not differ in any of the
remaining three summands of~$(**)$ from Lemma~\ref{lem:nablaric}(ii).
We here have $j_{Z_\beta}^4=-j_{Z_\beta}^2$ for $\beta=1,2,3$ and
$(j_{Z_\beta}j_{Z_\gamma})^2=0$ whenever $\beta\ne\gamma$; the same statements hold for~$j'$.
So $I_{\alpha\alpha\beta\gamma\beta\gamma}$ here happens to be $\Tr(-J^2)=-14=\Tr(-J^{\prime\,2})$ for both manifolds.
Also, $\Tr(Jj_{Z_\beta}j_{Z_\gamma})=0$ whenever $\beta\ne\gamma$, and the same for~$j'$;
so $I_{\alpha\alpha\beta\gamma|\beta\gamma}$ equals $\sum_{\beta=1}^3\Tr(Jj_{Z_\beta}^2)\Tr(j_{Z_\beta}^2)
=\Tr(J^2)\Tr(J)=14\cdot(-8)=\Tr(J^{\prime\,2})\Tr(J')$ for both manifolds.
Finally, note that $\Tr(j_{Z_\alpha}j_{Z_\beta})=0$ for $\alpha\ne\beta$, and the same for~$j'$. Thus, in
this example,
$I_{\alpha\gamma\beta\gamma|\alpha\beta}=\sum_{\alpha,\gamma=1}^3\Tr((j_{Z_\alpha}j_{Z_\gamma})^2)\Tr(j_{Z_\alpha}^2)
=\sum_{\alpha=1}^3\Tr(j_{Z_\alpha}^4)\Tr(j_{Z_\alpha}^2)=-2\cdot2-2\cdot2-4\cdot 4$, and the same for~$j'$.

The statement about $|\nabla\ric|^2$ now follows immediately: By~\eqref{eq:nablaricdiff},
the two manifolds differ
in the second summand of the formula from Lemma~\ref{lem:nablaric}, while the
remaining summands are the same for both; for the fourth summand, this follows either from the above
considerations or directly
from equation~\eqref{eq:jzjw}.
\end{proof}

\begin{remark}
\label{rem:r2}
As an aside, we will use the formulas from Lemma~\ref{lem:lowerorder}
to give an example of a pair of isospectral nilmanifolds differing
in the integrals of the fourth order curvature invariants
$|\ric|^2$ and $|R|^2$ (see Example~\ref{ex:sixtwo} below).
Although these are not the first examples
of isospectral manifolds with this property (see the Introduction),
they are the first such examples in the category of nilmanifolds.
Considering the heat invariants $a_0$, $a_1$, and $a_2$, note that a pair of isospectral,
locally homogeneous manifolds differs in $|\ric|^2$ if and only it differs in~$|R|^2$.
In the case of two-step nilmanifolds, it follows from Lemma~\ref{lem:lowerorder}(iii) and
Remark~\ref{rem:bigj}(ii)
that such a pair differs in $|\ric|^2$ if and only it differs in the value of
$\Tr(J^2)$. In Example~\ref{ex:fourthree}, we had $\Tr(J^3)\ne\Tr(J^{\prime\,3})$.
Nevertheless, the values of $\Tr(J^2)$ and $\Tr(J^{\prime\,2})$ happen to
coincide in that example, so we need a different one. The following
is related to an example from \cite{habil}, Proposition~3.6(ii)
(after replacing $j_{Z_2}(t)$ from that context by $3j_{Z_2}(t/3)-i\Id$,
evaluating at $t=0$, resp.~$t=2$, and identifying $\C^3$ with~$\R^6$).
\end{remark}

\begin{example}
\label{ex:sixtwo}
Let $m:=6$, $r:=2$, and for $Z=(c_1,c_2)\in\z=\R^2$ let
$j_Z$, resp.~$j'_Z$, be the endomorphism of $\v=\R^6$
given by the matrix
$$
\left(\begin{smallmatrix}
0&0&3c_2&c_1+c_2&0&0\\0&0&0&0&c_2&0\\-3c_2&0&0&0&0&-c_1+c_2\\
-c_1-c_2&0&0&0&0&3c_2\\0&-c_2&0&0&0&0\\0&0&c_1-c_2&-3c_2&0&0
\end{smallmatrix}\right),\quad\mathrm{resp.}\quad
\left(\begin{smallmatrix}
0&2c_2&c_2&c_1+c_2&0&0\\-2c_2&0&2c_2&0&c_2&0\\-c_2&-2c_2&0&0&0&-c_1+c_2\\
-c_1-c_2&0&0&0&2c_2&c_2\\0&-c_2&0&-2c_2&0&2c_2\\0&0&c_1-c_2&-c_2&-2c_2&0
\end{smallmatrix}\right),
$$
with respect to the standard basis of~$\R^6$.
The maps $j$ and~$j'$ are isospectral since
$j_{(c_1,c_2)}$ and $j'_{(c_1,c_2)}$ have the same characteristic polynomial
$\lambda^6+(2c_1^2+21c_2^2)\lambda^4+(c_1^2+9c_2^2)^2\lambda^2+c_2^2(c_1^2+8c_2^2)^2$.
Moreover, $\kernn(j_{(c_1,c_2)})=\kernn(j'_{(c_1,c_2)})=\{0\}$ if $c_2\ne0$;
for $c_2=0, c_1\ne0$ both kernels are $\spann\{X_2,X_5\}$. Therefore,
all conditions of Proposition~\ref{prop:gw} are satisfied and
$(\Gamma(j)\backslash G(j),g(j))$, $(\Gamma(j')\backslash G(j'),g(j'))$
are isospectral. A direct computation reveals
$\Tr(J^2)=630\ne 598=\Tr(J^{\prime\,2})$. By Remark~\ref{rem:r2}, this
implies that the two manifolds differ in the value of $|\ric|^2$, and
also in the value of~$|R|^2$.
\end{example}

Propositon~\ref{prop:formofcurvinvs} below concerns
the structure of curvature invariants of arbitrary order of two-step nilpotent
Lie groups with left invariant metrics. This description will enable
us to arrive at certain conclusions for higher order curvature
invariants in a special case (see Theorem~\ref{thm:zgreater}).
We first need the following observation:

\begin{remark}
\label{rem:formj}
Using Lemma~\ref{lem:firstformulas}(i),~(ii) repeatedly, one sees that
$\<(\nabla^p_{A_1,\ldots,A_p} R)(B,C)D,E\>$ with $A_1,\ldots,A_p,\allowbreak B,C,D,E
\in\{X_1,\ldots,X_m,Z_1,\ldots,Z_r\}$ is a linear combination of terms of order $p+2$ in~$j$
which are (if not zero) of the form
\begin{equation}
\label{eq:formj}
\<j_{Z_{\alpha_1}}\ldots j_{Z_{\alpha_i}}X_{\ell_1},X_{\ell_2}\>\cdot\ldots\cdot
\<j_{Z_{\alpha_j}}\ldots j_{Z_{\alpha_{p+2}}}X_{\ell_{2a-1}},X_{\ell_{2a}}\>.
\end{equation}
Moreover, the multiset $\{X_{\ell_1},\ldots,X_{\ell_{2a}},Z_{\alpha_1},\ldots,Z_{\alpha_{p+2}}\}$
of vectors occurring in~\eqref{eq:formj}
arises from the multiset $\{A_1,\ldots,A_p,B,C,D,E\}$ by possibly enlarging it by one or several pairs
of equal vectors from~$\{Z_1,\ldots,Z_r\}$; the vectors from~$\v$ are the same in both multisets.
In particular, $\<(\nabla^p_{A_1,\ldots,A_p} R)(B,C)D,E\>=0$ if the multiset
$\{A_1,\ldots,A_p,B,C,D,E\}$ contains an odd number of vectors from~$\v$.
\end{remark}

\begin{proposition}
\label{prop:formofcurvinvs}
Let $q\in\N$.
On a two-step nilpotent Lie group $G(j)$, endowed with the left invariant
metric~$g(j)$, each curvature invariant of order~$2q$ can be expressed
as a linear combination of polynomial invariants of~$j$ of the form
$I_{k_1\ldots k_\lambda|\ldots|k_\mu\ldots k_{2q}}$ as in Definition~\ref{def:Ik}.
\end{proposition}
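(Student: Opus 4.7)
The plan is to reduce the claim to purely tensorial bookkeeping adapted to the splitting $\v\oplus\z$. First, I would write any curvature invariant of order~$2q$ as a linear combination of complete tensor contractions of products $\nabla^{p_1}R\otimes\cdots\otimes\nabla^{p_s}R$ with $\sum_{i=1}^s(p_i+2)=2q$. This step is standard: by the first fundamental theorem of invariant theory for~$\O(n)$, any $\O(n)$-invariant polynomial in the components of tensors is a linear combination of such complete contractions; and each factor $\nabla^{p_i}R$ contributes exactly $p_i+2$ derivatives of the metric to the total order. I would then evaluate each contraction by summing over the orthonormal basis $\{X_1,\ldots,X_m,Z_1,\ldots,Z_r\}$ adapted to $\v\oplus\z$, and split that sum according to which of the paired slots receive a vector from $\v$ and which receive a vector from~$\z$; since $\v\perp\z$, no paired slot can be mixed.

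For each such splitting I would apply Remark~\ref{rem:formj} to each factor $\<(\nabla^{p_i}R)(B,C)D,E\>$: it expands as a sum of products of elementary terms $\<j_{Z_{\alpha_1}}\cdots j_{Z_{\alpha_i}}X_{\ell_1},X_{\ell_2}\>$ whose total number of $j$-letters is $p_i+2$, where the $X$'s exactly match the $\v$-slot arguments of that factor and the $Z_\alpha$'s consist of the $\z$-slot arguments, possibly augmented by additional matched pairs appearing as internal summations. Factors with an odd number of $\v$-slots vanish outright by Remark~\ref{rem:formj}.

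Next I would carry out the summations coming from the $\v$-contractions. Two $X$-slots contracted inside the same factor produce a trace on~$\v$ via $\sum_\ell\<j_{Z_{\alpha_1}}\cdots j_{Z_{\alpha_i}}X_\ell,X_\ell\>=\Tr(j_{Z_{\alpha_1}}\cdots j_{Z_{\alpha_i}})$. Two $X$-slots contracted across different factors can be merged using the completeness identity $\sum_\ell\<u,X_\ell\>\<v,X_\ell\>=\<u,v\>$ together with the skew-symmetry of each $j_{Z_\alpha}$, which glues the two $j$-strings into a single longer string, up to a controlled sign. Iterating this operation until no $X$ is left, the surviving expression is a product of traces of words in the $j_{Z_\alpha}$'s whose total letter count equals $\sum_i(p_i+2)=2q$.

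Finally, each original $\z$-contraction pairs two of the $Z_\alpha$-labels and sums over their common value in $\{1,\ldots,r\}$, as does each extra internal pair produced by Remark~\ref{rem:formj}. This gives exactly $q$ matched summations; assigning a common symbol $k_i\in\{1,\ldots,q\}$ to each pair yields precisely the notation of Definition~\ref{def:Ik}. I expect the main obstacle to be the middle step: verifying that the $\v$-contractions, both internal and cross-factor, always close up into disjoint cyclic products (so that the residue is a product of traces rather than an open chain), while carefully tracking the signs from the skew-symmetry of the $j_Z$. Once this combinatorial fact is secured, matching the outcome with the template of Definition~\ref{def:Ik} is bookkeeping.
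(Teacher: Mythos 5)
Your proposal is correct and follows essentially the same route as the paper: reduction to complete contractions (Weyl invariants, as in the cited work of Pr\"ufer--Tricerri--Vanhecke), expansion of each factor $\<(\nabla^{p_i}R)(B,C)D,E\>$ via Remark~\ref{rem:formj}, contraction of the $\v$-slots into traces of words in the $j_{Z_\alpha}$, and finally contraction of the $\z$-slots to produce the invariants $I_{k_1\ldots k_\lambda|\ldots|k_\mu\ldots k_{2q}}$. The combinatorial point you flag is immediate, since each elementary term has exactly two $\v$-slots and each $\v$-index occurs exactly twice, so the contraction pattern is a disjoint union of cycles, each yielding a trace.
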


\begin{proof}
According to~\cite{PTV}, p.~4646 (see also~\cite{BGM}, p.~75ff.), each
curvature invariant of
order~$2q$ is a linear combination of certain Weyl invariants of the form
\begin{equation}
\label{eq:W}
W=\Tr_\sigma (\nabla^{p_1}R\otimes\ldots\otimes\nabla^{p_\nu}R),
\end{equation}
where $\nu\in\N$, $p_i\in\N_0$ for each $i\in\{1,\ldots,\nu\}$, $p_1+\ldots+p_\nu$ is even,
$2q=2\nu+p_1+\ldots+p_\nu$, $\sigma\in S_{2N}$, $2N=4\nu+p_1+\ldots+p_\nu$,
and $\Tr_\sigma$ denotes the complete trace with respect to~$\sigma$. The latter is defined as the
sum according to the Einstein summation convention
with respect to equal indices $k_i=k_j$ in the expression
$$(\nabla^{p_1}R\otimes\ldots\otimes\nabla^{p_\nu}R)(e_{s_{k_1}},\ldots,e_{s_{k_{2N}}})
=(\nabla^{p_1}R)(e_{s_{k_1}},\ldots,e_{s_{k_{p_1+4}}})\ldots(\nabla^{p_\nu}R)
(e_{s_{k_{2N-p_\nu-3}}},\ldots,e_{s_{k_{2N}}}),
$$
where $\{e_1,\ldots,e_n\}$ is an orthonormal
basis of the tangent space at the point under consideration and $(k_1,\ldots,k_{2N})$ arises
from $(1,1,2,2,3,3,\ldots,N,N)$ by the permutation~$\sigma$.

In our case, by Remark~\ref{rem:formj}, each summand of~$W$ in~\eqref{eq:W}
is a linear combination of products of terms as in~\eqref{eq:formj},
so $W$ itself is a linear combination of terms of the form
\begin{equation}
\label{eq:Wsummands}
\<j_{Z_{\alpha_{s_1}}}\ldots j_{Z_{\alpha_{s_c}}}X_{\ell_{u_1}},X_{\ell_{u_2}}\>\cdot\ldots\cdot
\<j_{Z_{\alpha_{s_d}}}\ldots j_{Z_{\alpha_{s_{2q}}}}X_{\ell_{u_{2a-1}}},X_{\ell_{u_{2a}}}\>,
\end{equation}
with each $s_i$ and each $u_j$ occurring exactly twice.
Summation over pairs of equal $u_j$ will transform~\eqref{eq:Wsummands}
into a term of the form
$\Tr(j_{Z_{\alpha_{k_1}}}\ldots j_{Z_{\alpha_{k_\lambda}}})\cdot\ldots\cdot
\Tr(j_{Z_{\alpha_{k_\mu}}}\ldots j_{Z_{\alpha_{k_{2q}}}})$ in which still each~$k_i$ occurs exactly twice;
summation over pairs of equal $k_i$ then yields $I_{k_1\ldots k_\lambda|\ldots|k_\mu\ldots k_{2q}}$.
\end{proof}

We conclude this section by giving some partial results for $|\nabla R|^2$, $\hat R$, $\rcirc$ which
we will use in Section~\ref{sec:heis} to prove their inaudibility:

\begin{lemma}\
\label{lem:nablar}
\begin{itemize}
\item[(i)]
$|\nabla R|^2=-\hphantom{\frac{17}{64}}\llap{$\frac32$}I_{\alpha\beta\gamma|\alpha\beta\gamma}+L_1$,
\item[(ii)]
$\hphantom{|\nabla R|^2}\llap{$\hat R$}=-\hphantom{\frac{17}{64}}\llap{$\frac7{16}$}
I_{\alpha\beta\gamma|\alpha\beta\gamma}+L_2$,
\item[(iii)]
$\hphantom{|\nabla R|^2}\llap{$\rcirc$}=-\frac{17}{64}
I_{\alpha\beta\gamma|\alpha\beta\gamma}+L_3$,
\end{itemize}
where $L_1, L_2, L_3$ are universal linear combinations of certain other
$I_{k_1\ldots k_\lambda|\ldots|k_\mu\ldots k_6}$ in which all occurring subtuples
$(k_1,\ldots,k_\lambda)$, \dots, $(k_\mu,\ldots,k_6)$
are of even length.
\end{lemma}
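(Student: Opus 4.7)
The plan is to combine Proposition~\ref{prop:formofcurvinvs} with a combinatorial classification of which invariants $I_{k_1\ldots|\ldots|k_6}$ can contain odd-length subtuples, and then with a direct but tedious computation of the coefficient of $I_{\alpha\beta\gamma|\alpha\beta\gamma}$ in each of the three curvature invariants.

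First, I would classify the nonvanishing invariants $I_{k_1\ldots|\ldots|k_6}$ admitting at least one odd-length subtuple. Since $\Tr(j_{Z_\alpha})=0$, any partition containing a length-$1$ subtuple gives zero. A length-$3$ trace $\Tr(j_{Z_a}j_{Z_b}j_{Z_c})$ also vanishes whenever two of $a,b,c$ coincide, because $j_{Z_\alpha}^2$ is symmetric while $j_{Z_\beta}$ is skew, so that the trace of their product is zero. Hence among partitions of the six indices (carrying three pairs) with at least one odd part, the only surviving option is a $(3,3)$-partition in which each of the three pairs is split between the two subtuples. By the total antisymmetry of $(a,b,c)\mapsto\Tr(j_{Z_a}j_{Z_b}j_{Z_c})$, every such invariant equals $\pm I_{\alpha\beta\gamma|\alpha\beta\gamma}$. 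Thus the odd-subtuple contributions collapse to a single multiple of $I_{\alpha\beta\gamma|\alpha\beta\gamma}$, while the remaining terms involve only even-length subtuples and are absorbed into $L_1, L_2, L_3$.

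Second, I would compute the coefficient of $I_{\alpha\beta\gamma|\alpha\beta\gamma}$ in each of the three invariants. Using Lemma~\ref{lem:firstformulas} and Remark~\ref{rem:formj}, expand $|\nabla R|^2=\sum\<(\nabla_A R)(B,C)D,E\>^2$ over all choices of $A,B,C,D,E$ in $\{X_1,\ldots,X_m,Z_1,\ldots,Z_r\}$. By Remark~\ref{rem:formj} each summand decomposes into products of matrix entries of Clifford words in~$j$; the $I_{\alpha\beta\gamma|\alpha\beta\gamma}$ contribution arises precisely from those configurations where the word splits into two independent length-$3$ traces after summing over $\v$-indices. An analogous strategy handles $\hat R=R_{ijkl}R_{klpq}R_{pqij}$ and $\rcirc=R_{ikjl}R_{kplq}R_{piqj}$: for each triple of $R$-factors one identifies the assignments of tangent-basis vectors to $\v$ and $\z$ for which Lemma~\ref{lem:firstformulas}(ii) produces two disjoint length-$3$ Clifford traces, and one sums the resulting contributions to obtain the coefficients $-\tfrac{3}{2}$, $-\tfrac{7}{16}$, $-\tfrac{17}{64}$.

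The main obstacle is the combinatorial bookkeeping: there are many possible $\v$/$\z$-type assignments of the slots $A,B,C,D,E$ (resp.\ of the three pairs of indices in each factor of $\hat R$, $\rcirc$), each potentially producing Clifford words that must then be collated into the $I$-notation without missing or double-counting contributions. Fortunately the classification in the first step restricts attention to configurations whose Clifford word has the specific two-triple splitting structure, so that generic assignments — which produce either a single long trace or involve an index repetition within one factor that kills the trace by the skew-symmetry argument above — can be discarded. These calculations are parallel in spirit, though more intricate, to those for Lemma~\ref{lem:nablaric}, and are naturally relegated to the Appendix.
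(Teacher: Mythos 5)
Your overall strategy is sound and, for parts (i) and (ii), essentially coincides with the paper's: expand via Lemma~\ref{lem:firstformulas} and Remark~\ref{rem:formj}, observe that only configurations producing two interlocking length-three Clifford traces can contribute an odd-subtuple invariant, and collect coefficients. Your preliminary classification is a genuine (and clean) addition: since any length-one subtuple kills an invariant via $\Tr(j_{Z_\alpha})=0$, any length-three trace with a repeated index vanishes because $j_{Z_\alpha}^2$ is symmetric, and $\Tr(j_{Z_a}j_{Z_b}j_{Z_c})$ is totally antisymmetric, the only surviving odd-subtuple invariant of order six is $\pm I_{\alpha\beta\gamma|\alpha\beta\gamma}$. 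Combined with Proposition~\ref{prop:formofcurvinvs} this already yields the \emph{structural} form of the lemma for every sixth order curvature invariant, with some universal coefficient; the paper instead establishes this term by term while tracking the pairings (a)--(l), so your route buys a cheaper proof of the qualitative statement. Where you diverge substantively is part (iii): you propose a third direct expansion for $\rcirc$, whereas the paper deduces (iii) from (i), (ii), Lemma~\ref{lem:nablaric} and the integral relation of Proposition~\ref{prop:integralrelations}(iii), using local homogeneity on a compact quotient and then density of rational $j$ to remove the cocompactness hypothesis. The paper's detour trades one long tensor computation for a short global argument; your direct route works but is the most intricate of the three expansions.

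The one real shortfall is that the coefficients $-\tfrac32$, $-\tfrac7{16}$, $-\tfrac{17}{64}$ are asserted rather than derived, and for this lemma they are the entire quantitative content (they are exactly what drives Theorem~\ref{thm:nablar-equiv}). Your classification guarantees that a single coefficient exists in each case, but to finish you must still enumerate, for each of $|\nabla R|^2$, $\hat R$, $\rcirc$, precisely which cross-terms in the expansion interleave their $\v$-indices so as to produce two disjoint length-three traces (in the paper's computation for $|\nabla R|^2$ these are the twelve pairings in~\eqref{eq:pairings} plus their analogues for $\<(\nabla_XR)(W,Y)U,V\>$), verify that each such pairing contributes with the \emph{same} sign, and sum. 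Without that bookkeeping the proof is incomplete, even though the plan for carrying it out is correct.
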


We defer the proof of Lemma~\ref{lem:nablar} to the Appendix.

\Section{Curvature invariants of Heisenberg type nilmanifolds}
\label{sec:heis}
\noindent
We continue to use the notation from Definition~\ref{def:twostep}(i), (ii),
and we now always consider linear maps $j:\z\to\so(\v)$
of Heisenberg type. Recall from Example~\ref{ex:heis} that this means
$j_Z^2=-|Z|^2\Id_\v$ for all $Z\in\z$. By polarization, this is equivalent to
\begin{equation}
\label{eq:heis}
j_Zj_W+j_Wj_Z=-2\<Z,W\>\Id_\v\text{ for all }Z,W\in\z.
\end{equation}
Again, let $\{X_1,\ldots,X_m\}$ and $\{Z_1,\ldots,Z_r\}$
be orthonormal bases of $\v$ and~$\z$, respectively.

\begin{lemma}
\label{lem:heis}
In the Heisenberg type case, the following holds:
\begin{itemize}
\item[(i)]
$j_Zj_W=-j_Wj_Z$ for all $Z,W\in\z$ with $Z\perp W$.
\item[(ii)]
Let $k\in\N$ and $(\alpha_1,\ldots,\alpha_k)\in\{1,\ldots,r\}^k$. Let $\ell\in\{0,\ldots,k\}$
and $\beta_1<\ldots<\beta_\ell$
be such that $\{\beta_1,\ldots,\beta_\ell\}$ consists precisely of those $\alpha_i$ which
occur an odd number of times in $(\alpha_1,\ldots,\alpha_k)$.
Then there exists $c\in\{0,1\}$, depending only on the tuple $(\alpha_1,\ldots,\alpha_k)$,
but not on~$j$,
such that
$$j_{Z_{\alpha_1}}\ldots j_{Z_{\alpha_k}}=(-1)^c\,j_{Z_{\beta_1}}\ldots j_{Z_{\beta_\ell}},
$$
where in case $\ell=0$, the empty product $j_{Z_{\beta_1}}\ldots j_{Z_{\beta_\ell}}$
is to be read as $\Id_\v$.
\item[(iii)]
If $\ell$ is a positive even number and $\beta_1,\ldots,\beta_\ell\in\{1,\ldots,r\}$ are pairwise different
then $\Tr(j_{Z_{\beta_1}}\ldots j_{Z_{\beta_\ell}})=0$.
\item[(iv)]
If $\ell$ is positive, but strictly smaller than~$r$, then $\Tr(j_{Z_{\beta_1}}\ldots
j_{Z_{\beta_\ell}})=0$
for all $\beta_1,\ldots,\beta_\ell\in\{1,\ldots,r\}$. Trivially, the same
holds if $\ell=1$.
\end{itemize}
\end{lemma}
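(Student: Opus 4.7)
The plan is to prove (i)--(iv) in sequence, using only the Heisenberg hypothesis $j_Z^2=-|Z|^2\Id_\v$ and its polarization~\eqref{eq:heis} as inputs. Part (i) is immediate on setting $\langle Z,W\rangle=0$ in~\eqref{eq:heis}.

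For (ii) I would induct on $k$. The two elementary moves are: by (i), adjacent factors $j_{Z_{\alpha_i}},j_{Z_{\alpha_{i+1}}}$ with distinct indices may be swapped at the cost of a sign; by $j_{Z_\alpha}^2=-\Id_\v$, an adjacent pair with equal indices may be deleted at the cost of a sign. Repeatedly drag repeated indices together, delete them, and finally sort the surviving (now distinct, odd-occurrence) indices into increasing order. Every sign contributed along the way is determined by the combinatorial pattern of $(\alpha_1,\ldots,\alpha_k)$ alone and does not involve the specific maps $j_{Z_\alpha}$; their cumulative product is the promised $(-1)^c$.

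For (iii), cyclicity of the trace combined with (i) gives, for $T:=\Tr(j_{Z_{\beta_1}}\cdots j_{Z_{\beta_\ell}})$ with the $\beta_i$ pairwise distinct, the identity $T=(-1)^{\ell-1}T$ (rotate $j_{Z_{\beta_1}}$ to the rightmost position and then anti-commute it back past the remaining $\ell-1$ distinct factors using (i)); this forces $T=0$ whenever $\ell$ is even.

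Part (iv), whose $\beta_i$ I read as pairwise distinct in line with (iii) and the reduction in (ii), I would split by parity. Even $\ell$ reduces to (iii), and $\ell=1$ is the skew-symmetry of $j_{Z_{\beta_1}}$. For odd $\ell\geq 3$ the hypothesis $\ell<r$ allows me to pick $\gamma\in\{1,\ldots,r\}\setminus\{\beta_1,\ldots,\beta_\ell\}$; writing $A:=j_{Z_{\beta_1}}\cdots j_{Z_{\beta_\ell}}$, the chain
\[
-T \;=\; \Tr(j_{Z_\gamma}^2 A) \;=\; \Tr(j_{Z_\gamma}\, A\, j_{Z_\gamma}) \;=\; (-1)^\ell\,\Tr(j_{Z_\gamma}^2 A) \;=\; -(-1)^\ell T
\]
(successively using $j_{Z_\gamma}^2=-\Id_\v$, cyclicity, and (i) to move $j_{Z_\gamma}$ past all $\ell$ factors of $A$) collapses for odd $\ell$ to $T=-T$, hence $T=0$. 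The only real bookkeeping burden is in (ii), where one must verify that every sign contribution depends only on the combinatorics of the index tuple and not on the matrices themselves; the trace vanishings in (iii) and (iv) are one-line consequences of cyclicity and (i).
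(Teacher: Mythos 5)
Your proof is correct and follows essentially the same route as the paper's: (i) by polarization, (ii) by anticommuting and cancelling adjacent factors (the paper sorts into nondecreasing order and then applies $j_{Z_{\alpha_i}}^2=-\Id_\v$), (iii) by cyclicity of the trace combined with the sign $(-1)^{\ell-1}$, and (iv) by inserting $j_{Z_\gamma}^2=-\Id_\v$ for a $\gamma$ outside $\{\beta_1,\ldots,\beta_\ell\}$ and anticommuting it through. Your reading of (iv) as concerning pairwise distinct $\beta_i$ matches the paper's implicit convention (its even case is likewise deferred to (iii)), and for odd $\ell$ your conjugation argument, like the paper's, needs no distinctness among the $\beta_i$ themselves.
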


\begin{proof}
Part (i) is trivial by~\eqref{eq:heis}.
For (ii), one first repeatedly uses~(i) to arrange the factors in nondecreasing order w.r.t.~the
values of the~$\alpha_i$; the statement then follows from $j_{Z_{\alpha_i}}^2=-\Id_\v$.
If $\ell$ is positive and even, and $\beta_1,\ldots,\beta_\ell$ are pairwise different,
then (i) and the cyclicity of the trace imply $\Tr(j_{Z_{\beta_1}}\ldots j_{Z_{\beta_\ell}})
=-\Tr(j_{Z_{\beta_\ell}} j_{Z_{\beta_1}}\ldots j_{Z_{\beta_{\ell-1}}})=-\Tr(j_{Z_1}\ldots j_{Z_{\beta_\ell}})$,
hence~(iii).

For proving~(iv), it now
suffices to consider the case that $\ell$ is odd.
Since $\ell<r$, we can choose $\alpha\in\{1,\ldots,r\}\setminus\{\beta_1,\ldots,\beta_\ell\}$.
Then, using $j_{Z_\alpha}\inv=-j_{Z_\alpha}$ and~(i), we have
$\Tr(j_{Z_{\beta_1}}\ldots j_{Z_{\beta_\ell}})=\Tr(j_{Z_\alpha}j_{Z_{\beta_1}}\ldots
j_{Z_{\beta_\ell}}(-j_{Z_\alpha}))=\Tr(j_{Z_\alpha}^2j_{Z_{\beta_1}}\ldots j_{Z_{\beta_\ell}})
=-\Tr(j_{Z_{\beta_1}}\ldots j_{Z_{\beta_\ell}})$, hence~(iv).
\end{proof}

\begin{corollary}
\label{cor:heisI}
In the Heisenberg type case, the following holds:
\begin{itemize}
\item[(i)]
Any $I_{k_1\ldots k_\lambda|\ldots|k_\mu\ldots k_{2q}}$ as in Definition~\ref{def:Ik}
in which all the
occurring subtuples $(k_1,\ldots,k_\lambda)$, \dots, $(k_\mu,\ldots,k_{2q})$ are
of even length can be expressed
as a universal polynomial in $m=\dimm\v$ and $r=\dimm\z$ which does not
depend on~$j$.
\item[(ii)]
If at least one of the subtuples of odd length occurring in
$I_{k_1\ldots k_\lambda|\ldots|k_\mu\ldots k_{2q}}$
becomes strictly shorter than~$r$ or equal to one
after eliminating pairs of equal indices $k_i=k_j$ within that subtuple, then
$I_{k_1\ldots k_\lambda|\ldots|k_\mu\ldots k_{2q}}=0$.
\end{itemize}
\end{corollary}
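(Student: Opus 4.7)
The plan is to reduce both parts directly to Lemma~\ref{lem:heis}. For each summand in the defining sum of $I := I_{k_1\ldots k_\lambda|\ldots|k_\mu\ldots k_{2q}}$, a tuple $(\alpha_1,\ldots,\alpha_q)\in\{1,\ldots,r\}^q$ is fixed and each trace factor corresponds to a subtuple $(k_{i_1},\ldots,k_{i_s})$. I would apply Lemma~\ref{lem:heis}(ii) to rewrite the product of $j$'s inside such a factor as $(-1)^c j_{Z_{\beta_1}}\cdots j_{Z_{\beta_\ell}}$, where $\beta_1<\cdots<\beta_\ell$ are the distinct values among $\{\alpha_{k_{i_1}},\ldots,\alpha_{k_{i_s}}\}$ occurring an odd number of times. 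The key arithmetic input, used in both parts, is the inequality $\ell\le s'$ together with the congruence $\ell\equiv s'\pmod 2$, where $s'$ denotes the length after eliminating pairs of equal $k$'s within the subtuple (equivalently, the number of $k$-values having odd multiplicity). Indeed, every $\alpha$ value contributing to $\ell$ must receive at least one ``odd'' $k$ in its preimage, so $\ell\le s'$; and both $\ell$ and $s'$ are congruent modulo~$2$ to the total length~$s$.

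For part (ii), fix a subtuple of odd length $s$ with either $s'<r$ or $s'=1$. Then $s'$ is odd and $\ell$ is a positive odd integer with $\ell\le s'$. If $s'=1$ we get $\ell=1$, and if $s'<r$ we get $\ell<r$, so in either case Lemma~\ref{lem:heis}(iv) forces the trace factor to vanish. Since this happens for every $(\alpha_1,\ldots,\alpha_q)$, every summand of $I$ vanishes, hence $I=0$.

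For part (i), all subtuples have even length $s$, so $\ell$ is even. If $\ell\ge 2$, Lemma~\ref{lem:heis}(iii) applies (the $\beta_i$ are pairwise distinct by construction) and the trace vanishes; if $\ell=0$, the trace equals $\pm m$. Thus each summand equals either $\pm m^\nu$ or~$0$, where $\nu$ is the fixed number of subtuples and the sign is the product of the factors $(-1)^c$ supplied by Lemma~\ref{lem:heis}(ii), which depend only on the combinatorics of the $k$-pattern and on the coincidence pattern of $(\alpha_1,\ldots,\alpha_q)$, not on~$j$. To exhibit $I$ as a polynomial in $m$ and~$r$, I would group the outer sum according to the set partition of $\{1,\ldots,q\}$ that records which $\alpha_i$'s coincide; the number of tuples realizing a given partition with $p$ blocks is the falling factorial $r(r-1)\cdots(r-p+1)$, while the contribution of any tuple depends only on its partition type. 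This presents $I$ as an integer linear combination of terms $m^\nu\cdot r(r-1)\cdots(r-p+1)$, which is a polynomial in $m$ and~$r$ independent of~$j$. I expect no substantive obstacle beyond careful bookkeeping; the only mildly delicate point is the parity inequality $\ell\le s'$ that feeds both parts.
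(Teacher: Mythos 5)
Your proposal is correct and follows essentially the same route as the paper: reduce each trace factor via Lemma~\ref{lem:heis}(ii) to a product over the indices of odd multiplicity, observe that the reduced length~$\ell$ has the same parity as the subtuple length and satisfies $\ell\le s'$ (the paper phrases this as ``there might be even more equal indices $\alpha_i$ than equal indices $k_i$''), and then invoke Lemma~\ref{lem:heis}(iii) for even length and Lemma~\ref{lem:heis}(iv) for odd length. Your closing bookkeeping in part~(i) --- grouping the outer sum by coincidence patterns of $(\alpha_1,\ldots,\alpha_q)$ and counting with falling factorials $r(r-1)\cdots(r-p+1)$ --- is in fact slightly more explicit than the paper about why the result is literally a polynomial in~$r$, which is a welcome addition rather than a deviation.
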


\begin{proof}
Let $d$ be the length of one of the subtuples,
and let $\Tr(j_{Z_{\alpha_1}}\ldots j_{Z_{\alpha_d}})$ be the corresponding
factor in one of the $r^q$ summands occurring in the sum as which
$I_{k_1\ldots k_\lambda|\ldots|k_\mu\ldots k_{2q}}$ is defined.

By Lemma~\ref{lem:heis}(ii), $\Tr(j_{Z_{\alpha_1}}\ldots j_{Z_{\alpha_d}})$
can be simplified
to either $\pm\Tr(\Id_\v)\allowbreak=\pm m$ (where the sign does not depend on~$j$)
or to a new term which involves
only pairwise different $Z_{\alpha_i}$ and whose length $d'\le d$ is positive
and has the same parity as~$d$.

In this latter case, if $d$ and hence $d'$ is even, then the new term vanishes
by Lemma~\ref{lem:heis}(iii). This proves part~(i). If $d$ is odd, then the condition of~(ii)
implies, a forteriori, that $d'<r$ or $d'=1$ (note that there might be even more
equal indices $\alpha_i$ in $(\alpha_1,\ldots,\alpha_d)$ than equal indices~$k_i$
in the corresponding subtuple of $I_{k_1\ldots k_\lambda|\ldots|k_\mu\ldots k_{2q}}$).
So in this case, the new term vanishes by Lemma~\ref{lem:heis}(iv). This proves part~(ii).
\end{proof}

\begin{proposition}
\label{prop:heisequal}
\begin{itemize}
\item[(i)]
In the Heisenberg type case,
each curvature invariant of order two or four and each of
$\Tr(\Ric^3)$, $(*)$, $(**)$, $\threestar$, $|\nabla\ric|^2$
can be expressed
as a universal polynomial in $m=\dimm\v$ and $r=\dimm\z$ which does not
depend on~$j$.
\item[(ii)]
Any two isospectral nilmanifolds of Heisenberg type do not differ in any of
the curvature invariants mentioned in~(i).
\end{itemize}
\end{proposition}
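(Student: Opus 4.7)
For part~(i), most of the work has already been done in Lemma~\ref{lem:lowerorder} and Lemma~\ref{lem:nablaric}: each of $\scal$, $\scal^2$, $|\ric|^2$, $|R|^2$, $(*)$, $(**)$, and $|\nabla\ric|^2$ is expressed there as a linear combination of invariants $I_{k_1\ldots k_\lambda|\ldots|k_\mu\ldots k_{2q}}$ in which every subtuple has even length. Applying Corollary~\ref{cor:heisI}(i) then yields at once that in the Heisenberg type setting each of these seven invariants is a universal polynomial in $m$ and $r$ not depending on~$j$. For $\Tr(\Ric^3)$ I would compute the Ricci operator explicitly: using Lemma~\ref{lem:firstformulas}(iii), the Heisenberg identity $j_Z^2=-|Z|^2\Id_\v$ (so that $J=-r\,\Id_\v$), and the consequence $\Tr(j_{Z_\alpha}j_{Z_\beta})=-m\,\delta_{\alpha\beta}$ of Lemma~\ref{lem:heis}(iii), one finds that $\Ric$ acts as $-\tfrac{r}{2}\Id$ on $\v$ and as $\tfrac{m}{4}\Id$ on $\z$, whence $\Tr(\Ric^3)=-\tfrac{mr^3}{8}+\tfrac{rm^3}{64}$. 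For $\threestar$ I would invoke Proposition~\ref{prop:integralrelations}(ii) together with local homogeneity of $(G(j),g(j))$: every curvature invariant is then a constant function, so the integral identity reduces to the pointwise identity $\threestar=\tfrac14|\nabla\scal|^2-\Tr(\Ric^3)+(*)$; since $\scal$ is constant the first term vanishes, leaving $\threestar=(*)-\Tr(\Ric^3)$, which has just been shown to be a universal polynomial in $m$ and $r$.

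For part~(ii), the task reduces to showing that isospectral Heisenberg type nilmanifolds $N$ and $N'$ share the same values of $m$ and $r$; the universal polynomials from~(i) will then automatically force equality of each of the listed invariants. Weyl's asymptotic formula gives $m+r=m'+r'$. Audibility of $a_1$ (together with $a_0$) makes the constant scalar curvature audible, and by~(i) applied to $\scal$ this determines $mr$. Audibility of $a_2$ combined with audibility of $\scal^2$ then shows $|R|^2-|\ric|^2$ audible; by~(i) applied to $|\ric|^2$ and $|R|^2$, this yields a third polynomial relation in $(m,r)$ versus $(m',r')$. A short linear calculation shows that these three relations jointly force $(m,r)=(m',r')$.

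The main obstacle I anticipate lies in the final step of~(ii): the relations $m+r=m'+r'$ and $mr=m'r'$ by themselves only pin down the unordered pair $\{m,r\}$, so one needs a third, non-symmetric relation to decide which of the two values is the dimension of the center. Using $a_2$ to extract $|R|^2-|\ric|^2$, which is a polynomial in $(m,r)$ manifestly not invariant under $m\leftrightarrow r$, provides a clean such relation. An alternative, heavier route would be to appeal to the Clifford module structure underlying the Heisenberg type condition, which forces $m\geq d_r\geq r$ and thus excludes any nontrivial swap; but the $a_2$ argument is self-contained and seems preferable.
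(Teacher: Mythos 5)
Your proof is correct, and for part~(i) it follows essentially the paper's route: the paper likewise reduces everything to Corollary~\ref{cor:heisI}(i) via Lemma~\ref{lem:lowerorder} and Lemma~\ref{lem:nablaric}; your explicit evaluation $\Tr(\Ric^3)=-\tfrac{mr^3}{8}+\tfrac{rm^3}{64}$ and your treatment of $\threestar$ via Proposition~\ref{prop:integralrelations}(ii) plus local homogeneity merely make explicit what the paper leaves implicit (note that using that integral identity pointwise requires $G(j)$ to admit a compact quotient, which holds for every H-type group by~\cite{CD}, or else a density argument as in the paper's proof of Lemma~\ref{lem:nablar}(iii)). Where you genuinely diverge is in part~(ii), at the step upgrading $\{m,r\}=\{m',r'\}$ to $(m,r)=(m',r')$: the paper (Remark~\ref{rem:vz}) settles this by the structural inequality $m>r$, read off from the classification in~\cite{BTV} or from the dimensions of the simple $C_r$-modules, whereas you extract a third, non-symmetric audible quantity from~$a_2$. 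This works, but your word ``manifestly'' hides the one computation that must actually be done: in the H-type case $J=-r\,\Id_\v$, $\Tr(j_{Z_\alpha}j_{Z_\beta})=-m\,\delta_{\alpha\beta}$ and $I_{\alpha\beta\alpha\beta}=rm(2-r)$, so that $|R|^2=\tfrac38mr(m+r)+\tfrac14mr$ turns out to be \emph{symmetric} in $m$ and~$r$, and the needed asymmetry comes entirely from $|\ric|^2=\tfrac14mr^2+\tfrac1{16}m^2r$; one gets $|R|^2-|\ric|^2=\tfrac18mr^2+\tfrac5{16}m^2r+\tfrac14mr$, whose audibility together with $m+r=m'+r'$ and $mr=m'r'$ does force $(m,r)=(m',r')$. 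With that computation displayed your argument is complete, and it has the advantage of being self-contained, using only the spectral data $a_0,a_1,a_2$ and no representation theory; the paper's route is shorter but imports the inequality $m>r$ from outside.
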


\begin{proof}
For (i), just observe using Lemma~\ref{lem:lowerorder} and Lemma~\ref{lem:nablaric}
that each of these curvature invariants is a universal linear combination
of terms satisfying the condition of Corollary~\ref{cor:heisI}(i).
Part (ii) follows from (i) and Remark~\ref{rem:vz} below.
\end{proof}

\begin{remark}
\label{rem:vz}
Any two isospectral nilmanifolds of Heisenberg
type share the same dimensions $m=\dimm\v$ and also the same dimensions
$r=\dimm\z$.

To see this, let $N$ and $N'$ be two isospectral nilmanifolds of Heisenberg type,
associated with $j:\R^r\to\so(\R^m)$ and $j':\R^{r'}\to\so(\R^{m'})$, respectively.
Then necessarily $m+r=m'+r'$ since the dimension is spectrally determined.
Moverover, the two manifolds must have the same volume and the same total scalar
curvature, thus
$\scal(g(j))=\scal(g(j'))$. By Lemma~\ref{lem:lowerorder}(i) this means
$\Tr(J)=\Tr(J')$; hence $-mr=-m'r'$.
Together with $m+r=m'+r'$ this implies $\{m,r\}=\{m',r'\}$.
Using the classification of nilmanifolds of Heisenberg type from~\cite{BTV},
or recalling from Remark~\ref{rem:heisexist} that $\R^m$ is a module over $C_r$
and inspecting the dimensions of the simple real modules over $C_r$ in~\cite{LM},
one sees $m>r$ and $m'>r'$. So indeed we have $m=m'$ and $r=r'$.
\end{remark}

\begin{proposition}
\label{prop:heisImore}
Let $j,j':\z=\R^r\to\so(\v)=\so(m)$ be of Heisenberg type.
\begin{itemize}
\item[(i)]
If $2q<2r$ then $I_{k_1\ldots k_\lambda|\ldots|k_\mu\ldots k_{2q}}(j)
=I_{k_1\ldots k_\lambda|\ldots|k_\mu\ldots k_{2q}}(j')$ for each of the invariants
from Definition~\ref{def:Ik}.
\item[(ii)]
In the case $2q=2r$, the only invariants from Definition~\ref{def:Ik}
in which $j$ and $j'$ can possibly differ
are the $I_{k_1\ldots k_r|k_{\tau(1)}\ldots k_{\tau(r)}}$, where $\tau\in S_r$.
Note that $I_{k_1\ldots k_r|k_{\tau(1)}\ldots k_{\tau(r)}}=\pm
I_{k_1\ldots k_r|k_1\ldots k_r}$ due to Lemma~\ref{lem:heis}(i), depending
on the sign of the permutation~$\tau$.
\item[(iii)]
$j$ and $j'$ are equivalent in the sense of Remark~\ref{rem:isometric}(iii)
if and only if $I_{k_1\ldots k_r|k_1\ldots k_r}(j)=I_{k_1\ldots k_r|k_1\ldots k_r}(j')$.
\end{itemize}
\end{proposition}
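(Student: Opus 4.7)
The plan is to handle all three parts by a uniform length-and-parity analysis of the subtuples in $I=I_{k_1\ldots k_\lambda|\ldots|k_\mu\ldots k_{2q}}$, combined with Corollary~\ref{cor:heisI}. Throughout, for a subtuple of odd length $d_i$ write $d_i'\le d_i$ for its length after eliminating pairs of equal indices $k_j=k_l$ within it; then $d_i'$ remains odd. Since the total length $2q$ is even, the number of odd-length subtuples is also even, so it is either $0$ or at least~$2$.

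For part~(i), if every subtuple has even length, Corollary~\ref{cor:heisI}(i) already shows $I$ is a universal polynomial in $m$ and~$r$, hence $I(j)=I(j')$. Otherwise at least two odd subtuples are present. If all of them satisfied $d_i'\ge r$, then $\sum_i d_i'\ge 2r>2q$, contradicting $\sum_i d_i'\le\sum_i d_i\le 2q$. So some odd subtuple has $d_i'<r$ (or $d_i'=1$), and Corollary~\ref{cor:heisI}(ii) forces $I=0$ for both $j$ and~$j'$.

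For part~(ii), I apply the same dichotomy with the borderline $2q=2r$. The all-even case is again covered by Corollary~\ref{cor:heisI}(i), and the case where some $d_i'<r$ or $d_i'=1$ by Corollary~\ref{cor:heisI}(ii). In the remaining case every odd subtuple has $d_i'\ge r$ and $d_i'\ne 1$; with $\nu\ge 2$ odd subtuples present, $\nu r\le\sum_i d_i'\le 2r$ forces $\nu=2$ and equality throughout. Hence there are no even subtuples and both odd subtuples have length exactly $r$, with pairwise distinct indices within each block. This precisely yields the form $I_{k_1\ldots k_r|k_{\tau(1)}\ldots k_{\tau(r)}}$ for some $\tau\in S_r$; in particular $r$ must be odd. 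Then Lemma~\ref{lem:heis}(iv) shows that only summands with pairwise distinct $\alpha_i$'s contribute (any coincidence reduces each trace to some odd length $\ell<r$), and on those summands Lemma~\ref{lem:heis}(i) reorders the $r$ pairwise anticommuting factors in the second trace, producing precisely the sign $\operatorname{sgn}(\tau)$.

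For part~(iii), the forward direction is immediate from Remark~\ref{rem:Iequiv}. For the converse, if $r\notin\{3,7,11,\ldots\}$ there is nothing to prove by Remark~\ref{rem:heisexist}(iii). Otherwise I use Remark~\ref{rem:heisexist}(iii) to identify $j$ and $j'$, up to equivalence, with $\rho^r_{(a,b)}$ and $\rho^r_{(a',b')}$; the identity $m=(a+b)d_r=(a'+b')d_r$ gives $a+b=a'+b'$. Since $r$ is odd, the computation from~(ii) shows $I_{k_1\ldots k_r|k_1\ldots k_r}(j)=r!\,\bigl(\Tr(j_{Z_1}\cdots j_{Z_r})\bigr)^2$, which by Remark~\ref{rem:heisexist}(i) equals $r!\,((a-b)d_r)^2$. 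Equality of $I(j)$ and $I(j')$ therefore forces $(a-b)^2=(a'-b')^2$, whence $\{a,b\}=\{a',b'\}$, and Remark~\ref{rem:heisexist}(ii) then yields $j\equiv j'$. The main technical obstacle is the length-parity bookkeeping in parts~(i) and~(ii); once that is clean, (iii) reduces to a direct Clifford-module computation.
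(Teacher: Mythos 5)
Your proof is correct and follows essentially the same route as the paper's: Corollary~\ref{cor:heisI} reduces everything to the odd-length subtuples, a counting argument yields (i) and~(ii), and (iii) comes down to Remark~\ref{rem:Iequiv} together with the Clifford-module classification from Remark~\ref{rem:heisexist} and the computation $I_{k_1\ldots k_r|k_1\ldots k_r}=r!\,(\Tr(j_{Z_1}\cdots j_{Z_r}))^2$. The only (harmless) difference is in the bookkeeping for (i)--(ii): you invoke the parity of the number of odd-length blocks to get at least two of them, whereas the paper observes that each of the at least $r$ surviving indices of a single odd block must reappear in some other block, which forces $2q\ge 2r$ directly; both arguments are valid and rest on the same corollary.
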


\begin{proof}
By Corollary~\ref{cor:heisI}, $j$ and $j'$
cannot differ in $I_{k_1\ldots k_\lambda|\ldots|k_\mu\ldots k_{2q}}$
unless at least one of the subtuples $(k_1,\ldots,k_\lambda)$, \dots, $(k_\mu,\ldots,k_{2q})$
is of odd length at least~$\dimm\z=r$, after elimininating any pairs of equal indices
occurring within that subtuple. Each of the remaining (at least~$r$) indices has to
occur in one of the other subtuples (recall that each $k_i$ occurs exactly twice
in $(k_1,\ldots,k_{2q})$). But this implies $2q\ge r+r$ and, in the case $2q=2r$, that
there are exactly two subtuples, both of length~$r$. This shows (i) and~(ii).

The ``only if'' statement of~(iii) is a special case of Remark~\ref{rem:Iequiv}.
For the converse, let $j$ and~$j'$ be nonequivalent. By Remark~\ref{rem:heisexist}(ii),~(iii),
it follows that $r\in\{3,7,11,15,\ldots\}$, and that $j,j'$ are equivalent to
certain $\rho^r_{(a,b)}$, resp.~$\rho^r_{(a',b')}$ with $a+b=m=a'+b'$, but $\{a,b\}\ne\{a',b'\}$;
in particular, $|a-b|\ne|a'-b'|$.
Note that since $r$ is odd,
$\Tr(j_{Z_{\alpha_1}}\ldots j_{Z_{\alpha_r}})=0$ whenever $\alpha_1,\ldots,\alpha_r$
are not pairwise distinct (recall Lemma~\ref{lem:heis}(ii) and~(iv)). Moreover,
$(\Tr(j_{Z_{\alpha_1}}\ldots j_{Z_{\alpha_r}}))^2$ does not change under
permutations of $\alpha_1,\ldots,\alpha_r$ due to Lemma~\ref{lem:heis}(i).
So $I_{k_1\ldots k_r|k_1\ldots k_r}(j)=r!(\Tr(j_{Z_1}\ldots j_{Z_r}))^2$,
and similarly for~$j'$.
Now $I_{k_1\ldots k_r|k_1\ldots k_r}(j)\ne I_{k_1\ldots k_r|k_1\ldots k_r}(j')$
follows by~\eqref{eq:differenttraces} from Remark~\ref{rem:heisexist}(i).
\end{proof}

\begin{theorem}
\label{thm:zgreater}
\begin{itemize}
\item[(i)]
Any two isospectral nilmanifolds of Heisenberg type with $\dimm\z=r$ cannot
differ in any curvature invariant of order~$2q<2r$.
\item[(ii)]
Any two isospectral nilmanifolds of Heisenberg type with centers of dimension
strictly greater than three ($r>3$) do not differ
in any of the sixth, eighth, tenth or twelfth order curvature invariants.
\end{itemize}
\end{theorem}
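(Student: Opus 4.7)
The plan is to combine three ingredients already in place: Proposition~\ref{prop:formofcurvinvs} (which writes every curvature invariant on a two-step nilpotent Lie group, endowed with a left invariant metric, as a linear combination of the $I$-polynomials), Proposition~\ref{prop:heisImore} (which controls these $I$-polynomials for two Heisenberg type maps), and the Clifford-module classification in Remark~\ref{rem:heisexist}. Together with Remark~\ref{rem:vz}, which guarantees that two isospectral Heisenberg type nilmanifolds automatically share $m=\dimm\v$ and $r=\dimm\z$, these should yield the theorem essentially without further computation.

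For Part~(i), let $N,N'$ be isospectral Heisenberg type nilmanifolds with $\dimm\z=r$. By local homogeneity, every curvature invariant is constant on each of them, so it suffices to compare these constants. By Proposition~\ref{prop:formofcurvinvs}, any curvature invariant of order $2q$ is a universal linear combination (with coefficients not depending on $j$) of the polynomials $I_{k_1\ldots k_\lambda|\ldots|k_\mu\ldots k_{2q}}$. Proposition~\ref{prop:heisImore}(i) then asserts that whenever $2q<2r$, each such $I$ takes the same value on $j$ and $j'$. Consequently the corresponding curvature invariant takes the same value on $N$ and~$N'$.

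For Part~(ii), I would split according to the size of~$r$. If $r\geq 7$, then every order $2q\in\{6,8,10,12\}$ satisfies $2q\leq12<14\leq 2r$, so Part~(i) immediately delivers the conclusion. The remaining cases are $r\in\{4,5,6\}$; here I would invoke Remark~\ref{rem:heisexist}(iii), which exploits the fact that for $r\notin\{3,7,11,15,\ldots\}$ the real Clifford algebra~$C_r$ admits only one simple module up to isomorphism. Consequently any two Heisenberg type maps $j,j':\R^r\to\so(\R^m)$ are equivalent in the sense of Remark~\ref{rem:isometric}(iii), so by the same remark the manifolds $N,N'$ are already locally isometric and therefore agree in every curvature invariant whatsoever.

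I do not expect a substantive obstacle: once the machinery of Section~\ref{sec:curv} and the beginning of Section~\ref{sec:heis} is in place, the theorem reduces to this clean case distinction. The only point worth stressing is that Part~(ii) genuinely requires both tools---the combinatorial $I$-invariant control of Proposition~\ref{prop:heisImore}(i) for large~$r$, and the Clifford classification of Remark~\ref{rem:heisexist} for small $r>3$---and it is a happy coincidence that the ``exceptional'' Clifford dimensions $\{3,7,11,15,\ldots\}$ jump from $3$ directly to $7$, so that the two regimes exactly fit together to cover all orders up to~$12$.
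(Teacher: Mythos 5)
Your proposal is correct and follows essentially the same route as the paper: part (i) is exactly the combination of Proposition~\ref{prop:formofcurvinvs}, Remark~\ref{rem:vz} and Proposition~\ref{prop:heisImore}(i), and part (ii) rests on the same dichotomy the paper uses, namely that the gap in the exceptional Clifford dimensions $\{3,7,11,15,\ldots\}$ forces any locally nonisometric isospectral pair with $r>3$ to have $r\ge7$, so that $2r\ge14>12$ and part (i) applies, while for $r\in\{4,5,6\}$ Remark~\ref{rem:heisexist}(iii) gives local isometry outright. The only cosmetic difference is that the paper organizes the case split by ``locally isometric or not'' rather than by the size of $r$, which changes nothing of substance.
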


\begin{proof}
(i)
By Proposition~\ref{prop:formofcurvinvs}, each curvature invariant of order~$2q$ is a
linear combination (with universal coefficients) of certain $I_{k_1\ldots k_a|\ldots|k_b\ldots k_{2q}}$.
Thus, the statement follows immediately from Remark~\ref{rem:vz} and Proposition~\ref{prop:heisImore}(i).

(ii) For the sixth order curvature invariants, this follows directly from (i).
The statement for eighth, tenth and twelfth order curvature invariants equally follows from~(i) after
recalling from Remark~\ref{rem:heisexist} that any two isospectral
nilmanifolds of Heisenberg type with $r>3$ are either locally isometric (and the statement
thus trivial) or satisfy $r\in\{7,11,15,\dots\}$, thus $r\ge7$.
\end{proof}

\begin{theorem}
\label{thm:nablar-equiv}
Let $N$, $N'$ be two isospectral nilmanifolds of Heisenberg type
associated with Lie algebras satisfying $r=\dimm\z$.
If $r=3$ then the following conditions are equivalent:
\begin{itemize}
\item[(a)]
$N$ and $N'$ are locally isometric.
\item[(b)]
$N$ and $N'$ have the same value of $|\nabla R|^2$.
\item[(c)]
$N$ and $N'$ have the same value of $\hat R$.
\item[(d)]
$N$ and $N'$ have the same value of $\rcirc$.
\end{itemize}
If $r\ne3$, then (b), (c), (d) are true regardless of~(a).
\end{theorem}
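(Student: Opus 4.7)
The plan is to reduce each of (b), (c), (d) to a single algebraic condition on the maps $j, j':\R^r\to\so(m)$ and then handle the three regimes $r\le 2$, $r=3$, $r\ge 4$ using the structural machinery from Section~\ref{sec:heis}. By Remark~\ref{rem:vz}, isospectrality of $N$ and $N'$ forces $\dimm\v$ and $\dimm\z$ to coincide, so viewing $j, j'$ as two maps on the same $\R^r$ with values in the same $\so(m)$ is legitimate from the outset.

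The first step is to apply Lemma~\ref{lem:nablar} together with Corollary~\ref{cor:heisI}(i). Lemma~\ref{lem:nablar} expresses each of $|\nabla R|^2$, $\hat R$, $\rcirc$ as a nonzero scalar multiple of $I_{\alpha\beta\gamma|\alpha\beta\gamma}$ plus a universal linear combination $L_i$ of $I$-invariants whose subtuples all have even length. By Corollary~\ref{cor:heisI}(i), each such even-subtuple invariant is, in the Heisenberg type case, a universal polynomial in $m$ and $r$ alone; since $m$ and $r$ are common to $j$ and $j'$, each $L_i$ takes the same value on both. Consequently, each of (b), (c), (d) is equivalent to the single equality
\begin{equation*}
I_{\alpha\beta\gamma|\alpha\beta\gamma}(j) = I_{\alpha\beta\gamma|\alpha\beta\gamma}(j'),
\end{equation*}
and in particular (b), (c), (d) are automatically mutually equivalent for every~$r$.

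The second step is a case analysis on $r$. If $r \ge 4$, then the order $2q = 6$ of this invariant satisfies $2q < 2r$, so Proposition~\ref{prop:heisImore}(i) yields the equality unconditionally. If $r \le 2$, no length-three subtuple can carry three distinct indices from $\{1,\ldots,r\}$, so after cancelling pairs of equal indices every such subtuple reduces to length one; Corollary~\ref{cor:heisI}(ii) then forces $I_{\alpha\beta\gamma|\alpha\beta\gamma}(j) = 0 = I_{\alpha\beta\gamma|\alpha\beta\gamma}(j')$. In either regime, (b), (c), (d) hold without any reference to~(a). Finally, if $r = 3$, the invariant $I_{\alpha\beta\gamma|\alpha\beta\gamma}$ is exactly the distinguished invariant $I_{k_1 k_2 k_3 | k_1 k_2 k_3}$ of Proposition~\ref{prop:heisImore}(iii), so equality of its values on $j, j'$ is equivalent to equivalence of $j$ and $j'$ in the sense of Remark~\ref{rem:isometric}(iii); the same remark identifies this equivalence with local isometry of $N$ and~$N'$, closing the circle (a) $\Leftrightarrow$ (b) $\Leftrightarrow$ (c) $\Leftrightarrow$ (d).

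I expect the only real obstacle to lie upstream, in Lemma~\ref{lem:nablar} (which must produce all three coefficients of $I_{\alpha\beta\gamma|\alpha\beta\gamma}$ and verify that none is zero) and in Proposition~\ref{prop:heisImore}(iii) (which identifies equality of the single remaining invariant with equivalence of Clifford modules via the trace computation in Remark~\ref{rem:heisexist}). Once those are in hand, the argument above is a short bookkeeping exercise, and the trichotomy in $r$ is handled cleanly by Proposition~\ref{prop:heisImore}(i) and Corollary~\ref{cor:heisI}(ii).
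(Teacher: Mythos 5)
Your argument is correct and follows essentially the same route as the paper: reduce each of (b), (c), (d) to the single equality $I_{\alpha\beta\gamma|\alpha\beta\gamma}(j)=I_{\alpha\beta\gamma|\alpha\beta\gamma}(j')$ via Lemma~\ref{lem:nablar} and Corollary~\ref{cor:heisI}(i) (using Remark~\ref{rem:vz} to match $m$ and $r$), then dispose of $r>3$ by Proposition~\ref{prop:heisImore}(i) and settle $r=3$ by Proposition~\ref{prop:heisImore}(iii) together with Remark~\ref{rem:isometric}(iii). The only cosmetic deviation is your treatment of $r\le2$ by direct vanishing of the invariant (the paper instead cites Remark~\ref{rem:heisexist}(iii)); that vanishing is indeed correct, but it rests on Lemma~\ref{lem:heis}(ii),(iv) applied to the actual summation indices $\alpha,\beta,\gamma$, since Corollary~\ref{cor:heisI}(ii) as literally stated only eliminates repeated formal indices $k_i$ within a subtuple and its hypothesis is not met here.
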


\begin{proof}
Trivially, (a) implies each of the other three statements. Moreover,
if $r\notin\{3,7,11,15,\ldots\}$ then (b), (c), (d) are true by Remark~\ref{rem:heisexist}(iii).
Let $\g(j)$, $\g(j')$ be the metric Lie algebras associated with $N$,~$N'$.
By Lemma~\ref{lem:nablar} and Corollary~\ref{cor:heisI}(i), each of (b), (c), (d)
is equivalent to
\begin{equation}
\label{eq:abg}
I_{\alpha\beta\gamma|\alpha\beta\gamma}(j)
=I_{\alpha\beta\gamma|\alpha\beta\gamma}(j').
\end{equation}
For $r>3$, this is always true by Theorem~\ref{thm:zgreater}(i).
For $r=3$, \eqref{eq:abg} is equivalent to~(a) by
Proposition~\ref{prop:heisImore}(iii)
and Remark~\ref{rem:isometric}(iii).
\end{proof}

\begin{corollary}
\label{cor:nablar}
In any pair of isospectral, locally nonisometric manifolds of Heisenberg type associated
with Lie algebras satisfying $r=\dimm\z=3$, the two manifolds differ in each of the values of $|\nabla R|^2$,
$\hat R$, $\rcirc$. Since such pairs do exist (see Remark~\ref{rem:heisexist}(i)), neither
$\int |\nabla R|^2$ nor $\int\hat R$ nor $\int\rcirc$ is audible.
\end{corollary}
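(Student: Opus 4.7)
The plan is to derive this corollary as a direct consequence of Theorem~\ref{thm:nablar-equiv} combined with the existence statement for locally nonisometric isospectral pairs of Heisenberg type already established in Remark~\ref{rem:heisexist}(i), plus the trivial observation that locally homogeneous manifolds of equal volume with unequal values of a curvature invariant must also have unequal integrals of that invariant.

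First I would apply Theorem~\ref{thm:nablar-equiv} in the case $r=\dimm\z=3$: since conditions (a), (b), (c), (d) there are pairwise equivalent, the failure of local isometry (the negation of (a)) forces the failure of (b), (c), (d). This immediately gives that the constants $|\nabla R|^2$, $\hat R$, $\rcirc$ take distinct values on the two manifolds. Next I would invoke Remark~\ref{rem:heisexist}(i) (specifically, the pair $\rho^3_{(2,0)}$ and $\rho^3_{(1,1)}$ underlying Gordon's original example) to confirm that isospectral, locally nonisometric nilmanifolds of Heisenberg type with $r=3$ actually exist, so that the hypothesis of the corollary is nonvacuous.

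Finally, I would translate the pointwise inequality into inaudibility: isospectral manifolds share all heat invariants, in particular $a_0$, and hence the same volume; on a locally homogeneous manifold each curvature invariant is a constant function, so distinct constant values and equal volumes imply distinct integrals. This yields the inaudibility of $\textstyle\int|\nabla R|^2$, $\textstyle\int\hat R$, and $\textstyle\int\rcirc$.

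There is essentially no hard step here, as all of the work has been done in Theorem~\ref{thm:nablar-equiv} and Remark~\ref{rem:heisexist}(i); the only mild point to be careful about is the volume-and-homogeneity argument used to conclude inaudibility from the pointwise statement, which was already flagged at the end of Remark~\ref{rem:overview}(iv) and so can be invoked briefly.
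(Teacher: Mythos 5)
Your proposal is correct and follows exactly the route the paper intends: the corollary is an immediate consequence of Theorem~\ref{thm:nablar-equiv} (the equivalence of (a)--(d) for $r=3$), the existence of locally nonisometric isospectral pairs from Remark~\ref{rem:heisexist}(i), and the volume-plus-local-homogeneity argument already recorded in Remark~\ref{rem:overview}(iv). Nothing is missing.
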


Two locally homogeneous manifolds $(M,g)$, $(M',g')$ are called \textit{curvature equivalent} (of order zero)
if for $p\in M$, $p'\in M'$ there exists an euclidean isometry $F:(T_pM,g_p)\to (T_{p'}M',g'_{p'})$
which intertwines the Riemannian curvature operators; that is, $F(R(X,Y)Z)=R(F(X),F(Y))F(Z)$
for all $X,Y,Z\in T_pM$. The following result provides a certain contrast to Theorem~\ref{thm:zgreater}(i):

\begin{proposition}
\label{prop:notcurveq}
Let $N$ and $N'$ be any two nilmanifolds of Heisenberg type (without restriction to the dimensions
of the centers). If $N$ and $N'$ are not
locally isometric, then they are not curvature equivalent.
\end{proposition}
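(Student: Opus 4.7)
My plan is to prove the contrapositive: assume $N$ and $N'$ are curvature equivalent at corresponding points, and deduce that they are locally isometric. By Remark~\ref{rem:heisexist}(ii) and~(iii) the statement is automatic unless $r = \dimm\z \in \{3,7,11,\ldots\}$ and $\v$ contains at least two simple $C_r$-submodules; in this nontrivial range $\dimm\v = kd_r$ with $k \ge 2$ and $d_r \ge r+1$. Since the curvature tensor determines the Ricci operator and, in the Heisenberg case, $J = -r\,\Id_\v$ gives by Lemma~\ref{lem:firstformulas}(iii) that $\Ric|_\v = -(r/2)\Id_\v$ and $\Ric|_\z = (m/4)\Id_\z$ — two distinct eigenvalues — any curvature equivalence $F$ respects the splitting $\v \oplus \z$. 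Hence $F = A \oplus B$ with $A:\v\to\v'$, $B:\z\to\z'$ linear isometries and $m=m'$, $r=r'$. Setting $\tilde\jmath_Z := A^{-1}j'_{BZ}A$ yields a Heisenberg type map $\tilde\jmath:\z\to\so(\v)$, and the problem reduces to showing that $j$ and $\tilde\jmath$ are equivalent in the sense of Remark~\ref{rem:isometric}(iii).

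I would then extract two algebraic consequences of the intertwining. The $\<R(X,Z)Y,W\>$ component of Lemma~\ref{lem:firstformulas}(ii) (with $X,Y\in\v$, $Z,W\in\z$) immediately yields $\tilde\jmath_Z\tilde\jmath_W = j_Zj_W$ for all $Z,W\in\z$. The $\v\v\v\v$ component at $Y = X$, after using the Heisenberg identity $\<j_ZX, j_WX\> = \<Z,W\>|X|^2$, takes the form $\<R(X,U)X,V\> = -\tfrac34\<M_XU,V\>$ with $M_X = |X|^2\Pr_{j(\z)X}$; the intertwining forces $M_X = \tilde M_X$, so $j(\z)X = \tilde\jmath(\z)X$ as $r$-dimensional subspaces of $\v$ for every $X \in \v \setminus \{0\}$. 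Consequently there is a unique $Q_X \in \O(\z)$ with $\tilde\jmath_Z X = j_{Q_X Z}X$ for all $Z \in \z$.

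The essential step is to show $Q_X \equiv Q$ is a constant element of $\O(\z)$ independent of $X$. If $X_1, X_2 \in \v \setminus \{0\}$ satisfy $j(\z)X_1 \cap j(\z)X_2 = \{0\}$ and $X_1 + X_2 \ne 0$, expanding $\tilde\jmath_Z(X_1 + X_2)$ in two ways via the definition of $Q$ produces
\[ j_{(Q_{X_1}-Q_{X_1+X_2})Z}X_1 + j_{(Q_{X_2}-Q_{X_1+X_2})Z}X_2 = 0, \]
and the disjointness together with the injectivity of $W \mapsto j_WX_i$ forces $Q_{X_1} = Q_{X_1+X_2} = Q_{X_2}$. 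To connect arbitrary $X_0, X_1$ I would construct $X_3$ simultaneously in general position with both: the bad set $B_X := \{X' \in \v : j(\z)X \cap j(\z)X' \ne \{0\}\}$ is the image of the polynomial map $(B', A) \in S^{r-1} \times \z \mapsto -j_{B'}j_AX$, so it has algebraic dimension at most $2r-1$, and the inequality $\dimm\v = kd_r \ge 2(r+1) > 2r - 1$ makes $(\v\setminus\{0\}) \setminus (B_{X_0} \cup B_{X_1})$ nonempty. Applying the previous identity twice yields $Q_{X_0} = Q_{X_3} = Q_{X_1}$.

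Once $Q$ is constant, $\tilde\jmath_Z = j_{QZ}$, equivalently $j'_W = Aj_{QB^{-1}W}A^{-1}$ for all $W \in \z'$, so $(A, BQ^{-1})$ realizes an equivalence between $j$ and $j'$, and by Remark~\ref{rem:isometric}(iii) the manifolds $N$ and $N'$ are locally isometric, contradicting the hypothesis. I expect the main obstacle to be the constancy argument for $Q_X$: the dimension count for $B_X$ has to be pinned down, and the inequality $\dimm\v > 2r-1$ verified using the standard lower bound $d_r \ge r + 1$ for simple real $C_r$-modules at $r \equiv 3 \pmod 4$. The remaining ingredients are straightforward manipulations of Lemma~\ref{lem:firstformulas} together with the defining Heisenberg identity $j_Zj_W + j_Wj_Z = -2\<Z,W\>\Id_\v$.
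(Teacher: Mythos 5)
Your argument is correct, but it follows a genuinely different route from the paper's. Both proofs begin the same way: since $\Ric$ is $-\tfrac r2\Id_\v$ on $\v$ and $\tfrac m4\Id_\z$ on $\z$, a curvature equivalence must split as $A\oplus B$, giving $m=m'$, $r=r'$ and reducing the claim to the equivalence of $j$ and $j'$. From there the paper stays inside its invariant-theoretic machinery: it introduces the operator $R^{\v\wedge\v}$ on $\v\wedge\v$, computes $\Tr\bigl((R^{\v\wedge\v})^q\bigr)$ in terms of the invariants of Definition~\ref{def:Ik} (Lemma~\ref{lem:TrRvq}), specializes to $q=r$, notes that $I_{k_1\ldots k_rk_1\ldots k_r}$ automatically agrees for $j$ and $j'$, and concludes $I_{k_1\ldots k_r|k_1\ldots k_r}(j)=I_{k_1\ldots k_r|k_1\ldots k_r}(j')$, which forces equivalence via Proposition~\ref{prop:heisImore}(iii), i.e.\ via the Clifford-module trace identity $\Tr(j_{Z_1}\cdots j_{Z_r})=\pm(a-b)d_r$. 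You instead reconstruct the equivalence explicitly: the identity $\<R(X,U)X,V\>=-\tfrac34|X|^2\<\Pr_{j(\z)X}U,V\>$ (a correct consequence of Lemma~\ref{lem:firstformulas}(ii) and $\<j_ZX,j_WX\>=\<Z,W\>|X|^2$) forces $j(\z)X=\tilde\jmath(\z)X$ for every $X\ne0$, hence a well-defined $Q_X\in\O(\z)$ with $\tilde\jmath_ZX=j_{Q_XZ}X$; your general-position argument shows $Q_X$ is constant, since the bad set $B_X$ lies in the image of a $(2r-1)$-dimensional manifold while $m=kd_r\ge2(r+1)>2r-1$ in the only nontrivial case, and then $(A,BQ^{-1})$ is an explicit equivalence. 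Your version is more elementary (it needs neither Lemma~\ref{lem:TrRvq} nor the $I$-invariants) and proves somewhat more, namely that every curvature equivalence of Heisenberg-type nilmanifolds is induced by an isometry of the metric Lie algebras composed with a rotation of the center; the price is the measure-zero/dimension-count step and the bound $d_r\ge r+1$, which the paper's two-line trace argument avoids. Two small points to tidy up in a final write-up: the relation $\tilde\jmath_Z\tilde\jmath_W=j_Zj_W$ that you extract from the mixed curvature block is never actually used, and the reduction to $r\in\{3,7,11,\ldots\}$ with at least two simple summands should formally be placed after the Ricci step that establishes $m=m'$ and $r=r'$ (as it stands it presupposes them).
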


For the proof, the following lemma will serve as the key:

\begin{lemma}
\label{lem:TrRvq}
Let $j:\z=\R^r\to\so(\v)=\so(m)$ be of Heisenberg type. Write $\g:=\g(j)$ and view $R$ as an endomorphism
of $\g\wedge\g$ by requiring
$\<R(A,B)C,D\>=\<R(A\wedge B),C\wedge D\>$
for all $A,B,C,D\in\g$,
where the inner product on~$\g\wedge\g$ is defined in the usual way by bilinear extension
of $\<E\wedge F,C\wedge D\>=\<E,C\>\<F,D\>-\<E,D\>\<F,C\>$. Write $R^{\v\wedge\v}:=\Pr_{\v\wedge\v}
\circ R\restr{\v\wedge\v}$, where $\Pr_{\v\wedge\v}:\g\wedge\g\to\v\wedge\v$
denotes orthogonal projection. Then for all $q\in\N$ we have
$$\Tr((R^{\v\wedge\v})^q) = (-\tfrac14)^q\bigl(\tfrac12I_{k_1\ldots k_q|k_1\ldots k_q}-
  \tfrac12I_{k_1\ldots k_qk_1\ldots k_q}+r(2-r+m)^q-r(2-r)^q\bigr).
$$
\end{lemma}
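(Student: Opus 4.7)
The plan is to rewrite $R^{\v\wedge\v}$ as a sum of two manageable operators on $\v\wedge\v$, find an invariant orthogonal decomposition on which each summand acts transparently, and compute the trace on each piece. Introduce $\omega_\alpha:=\tfrac12\sum_i X_i\wedge j_{Z_\alpha}X_i\in\v\wedge\v$, the 2-form dual to $j_{Z_\alpha}$, characterized by $\<\omega_\alpha,X\wedge U\>=\<j_{Z_\alpha}X,U\>$, and let $P_\alpha(X\wedge U):=j_{Z_\alpha}X\wedge j_{Z_\alpha}U$ be the exterior square of $j_{Z_\alpha}$. A direct regrouping of the three terms in the formula for $\<R(X,U)Y,V\>$ from Lemma~\ref{lem:firstformulas}(ii) (valid for $X,U,Y,V\in\v$) yields
\begin{equation*}
R^{\v\wedge\v}=-\tfrac14 S-\tfrac12 T,\qquad S:=\tsum_{\alpha=1}^r P_\alpha,\qquad \<T\xi,\eta\>:=\tsum_\alpha\<\omega_\alpha,\xi\>\<\omega_\alpha,\eta\>.
\end{equation*}
Each $P_\alpha$ is self-adjoint; and since $j_{Z_\alpha}j_{Z_\beta}=-j_{Z_\beta}j_{Z_\alpha}$ for $\alpha\ne\beta$ by Lemma~\ref{lem:heis}(i), the sign cancels in the exterior square and the $P_\alpha$ pairwise commute.

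The Heisenberg identities $j_Z^2=-|Z|^2\Id_\v$ and $j_{Z_\beta}j_{Z_\alpha}j_{Z_\beta}=j_{Z_\alpha}$ (for $\alpha\ne\beta$) give at once $P_\alpha\omega_\alpha=\omega_\alpha$ and $P_\beta\omega_\alpha=-\omega_\alpha$, hence $S\omega_\alpha=(2-r)\omega_\alpha$. From $\Tr(j_{Z_\alpha}j_{Z_\gamma})=-m\delta_{\alpha\gamma}$ one computes $\<\omega_\alpha,\omega_\gamma\>=\tfrac m2\delta_{\alpha\gamma}$, so the $\omega_\alpha$ form an orthogonal basis of $\Omega:=\spann\{\omega_1,\ldots,\omega_r\}$ (in particular $\dimm\Omega=r$), and $T$ acts as $\tfrac m2\Id_\Omega$ on $\Omega$ and as zero on $\Omega^\perp$. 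Combining, $R^{\v\wedge\v}|_\Omega=-\tfrac14(m+2-r)\Id_\Omega$. Since $S$ is self-adjoint and preserves $\Omega$, and $T$ vanishes on $\Omega^\perp$, the subspace $\Omega^\perp$ is also $R^{\v\wedge\v}$-invariant, with $R^{\v\wedge\v}|_{\Omega^\perp}=-\tfrac14 S|_{\Omega^\perp}$.

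Splitting the trace according to $\v\wedge\v=\Omega\oplus\Omega^\perp$ gives
\begin{equation*}
\Tr((R^{\v\wedge\v})^q)=(-\tfrac14)^q\bigl[r(m+2-r)^q+\Tr(S^q)-r(2-r)^q\bigr].
\end{equation*}
To evaluate $\Tr(S^q)$, expand $S^q=\sum_{\alpha_1,\ldots,\alpha_q}P_{\alpha_1}\cdots P_{\alpha_q}$ and observe that the composed operator is the exterior square of $J:=j_{Z_{\alpha_1}}\cdots j_{Z_{\alpha_q}}$. Applying the identity $\Tr(A\wedge A)=\tfrac12\bigl((\Tr A)^2-\Tr(A^2)\bigr)$ for an endomorphism $A$ of $\v$ (the second elementary symmetric polynomial in its eigenvalues) and summing over the multi-index yields $\Tr(S^q)=\tfrac12 I_{k_1\ldots k_q|k_1\ldots k_q}-\tfrac12 I_{k_1\ldots k_q k_1\ldots k_q}$; substituting back produces the claimed formula.

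The main obstacle is the very first step: extracting the clean decomposition $R^{\v\wedge\v}=-\tfrac14 S-\tfrac12 T$ from the three asymmetric summands of Lemma~\ref{lem:firstformulas}(ii) requires careful sign tracking via the antisymmetries of the wedge product and of each $j_{Z_\alpha}$. Once that identity is in hand, Lemma~\ref{lem:heis} forces $\Omega$ to be a simultaneous eigenspace of all the $P_\beta$, and everything else reduces to the exterior-square trace identity together with the eigenvalue computation on $\Omega$.
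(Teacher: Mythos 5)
Your proposal is correct and follows essentially the same route as the paper: your operators $S$ and $T$ are exactly the paper's $\Phi$ and $\frac m2\Pr_\E$ (with $\omega_\alpha=\frac12 E_{Z_\alpha}$), the eigenvalue computation $S\omega_\alpha=(2-r)\omega_\alpha$ and the identity $R^{\v\wedge\v}=-\frac14(S+m\Pr_\Omega)$ coincide with the paper's, and your evaluation of $\Tr(S^q)$ via $\Tr(\Lambda^2A)=\frac12\bigl((\Tr A)^2-\Tr(A^2)\bigr)$ reproduces the paper's direct computation. The only cosmetic differences are that you split the trace over $\Omega\oplus\Omega^\perp$ where the paper expands $(\Phi+m\Pr_\E)^q$ binomially, and that your symbol $J$ for $j_{Z_{\alpha_1}}\cdots j_{Z_{\alpha_q}}$ clashes with the paper's $J=\sum_\alpha j_{Z_\alpha}^2$ and should be renamed.
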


\begin{proof}
As always, let $\{X_1,\ldots,X_m\}$ and $\{Z_1,\ldots,Z_r\}$ be orthonormal bases of $\v$, resp.~$\z$.
For $Z\in\z$, we let $E_Z:=\sum_{k=1}^m X_k\wedge j_ZX_k\in\v\wedge\v$. Note that $E_Z$
is defined independently of the choice of orthonormal basis in~$\v$.
If $Z\in\z$ is a unit vector then
\begin{equation*}
  \begin{split}
  |E_Z|^2&=\tsum_{k,\ell=1}^m\<X_k\wedge j_ZX_k,X_\ell\wedge j_ZX_\ell\>\\
  &=\tsum_{k,\ell=1}^m(\<X_k,X_\ell\>\<j_ZX_k,j_ZX_\ell\>-\<X_k,j_ZX_\ell\>\<X_\ell,j_ZX_k\>)
  =-2\Tr(j_Z^2)=2m.
  \end{split}
\end{equation*}
Using polarization we see that $\{E_{Z_1},\ldots,E_{Z_r}\}\subset\v\wedge\v$
is an orthogonal set of vectors of norm~$\sqrt{2m}$.
Define $\Phi:=\v\wedge\v\to\v\wedge\v$ by
$\Phi(X\wedge Y):=\sum_{\alpha=1}^r j_{Z_\alpha}X\wedge j_{Z_\alpha}Y$.
We obtain, using that $\{j_{Z_\alpha}X_1,\ldots,j_{Z_\alpha}X_m\}$ is again
an orthogonal basis of~$\v$:
\begin{equation*}
  \begin{split}
  \Phi(E_Z)&=\tsum_{\alpha=1}^r\tsum_{k=1}^m(j_{Z_\alpha}X_k\wedge j_{Z_\alpha}j_Z X_k)\\
    &=\tsum_{\alpha=1}^r\tsum_{k=1}^m(-j_{Z_\alpha}
      X_k\wedge j_Z j_{Z_\alpha}X_k-j_{Z_\alpha}X_k\wedge 2\<Z,Z_\alpha\>X_k)=(-r+2)E_Z
  \end{split}
\end{equation*}
for $Z\in\z$. Let $\Pr_\E:\v\wedge\v\to\v\wedge\v$ denote
orthogonal projection to $\E:=\spann\{E_{Z_1},\ldots,E_{Z_r}\}$. Note that $\Phi$
is symmetric. Thus, the previous formula implies $\Phi(\E)=\E$, $\Phi(\E^\perp)=\E^\perp$, and
\begin{equation}
\label{eq:commproj}
\Phi\circ\Pr_\E=\Pr_\E\circ\Phi=(2-r)\Pr_\E.
\end{equation}
On the other hand, for $X,U,Y,V\in\v$, the formula for $\<R(X,U)Y,V\>$
from Lemma~\ref{lem:firstformulas}(ii) easily translates into
\begin{equation*}
  \begin{split}
  \<R(X\wedge U),Y\wedge V\>&=\tsum_{\alpha=1}^r
  \bigl(-\tfrac14\<j_{Z_\alpha}X\wedge j_{Z_\alpha}U,Y\wedge V\>
    -\tfrac18\<X\wedge U,E_{Z_\alpha}\>\<E_{Z_\alpha},Y\wedge V\>\bigr)\\
  &=-\tfrac14\<\Phi(X\wedge U),Y\wedge V\>-\tfrac18\cdot 2m\<\Pr_\E(X\wedge U),Y\wedge V\>
  \end{split}
\end{equation*}
(recall that $|E_{Z_\alpha}|=\sqrt{2m}$), hence
\begin{equation*}
R^{\v\wedge\v}=-\tfrac14(\Phi+m\Pr_\E).
\end{equation*}
Using~\eqref{eq:commproj} and $\Tr(\Pr_\E)=r$ we conclude
$$\Tr((R^{\v\wedge\v})^q)=(-\tfrac14)^q\bigl(\Tr(\Phi^q)+\tsum_{p=1}^q\binom qp(2-r)^{q-p} m^pr\bigr)
  =(-\tfrac14)^q\bigl(\Tr(\Phi^q)+r((2-r+m)^q-(2-r)^q)\bigr).
$$
The statement thus follows from
\begin{equation*}
  \begin{split}
  \Tr(\Phi^q)&=\tsum_{k<\ell}\<\Phi^q(X_k\wedge X_\ell),X_k\wedge X_\ell\>
   =\tfrac12\tsum_{k,\ell=1}^m\<\Phi^q(X_k\wedge X_\ell),X_k\wedge X_\ell\>\\
   &=\tfrac12\tsum_{k,\ell=1}^m\tsum_{\alpha_1,\ldots,\alpha_q=1}^r
     \<j_{Z_{\alpha_1}}\ldots j_{Z_{\alpha_q}}X_k\wedge j_{Z_{\alpha_1}}\ldots j_{Z_{\alpha_q}}X_\ell,X_k\wedge X_\ell\>\\
   &=\tfrac12\tsum_{\alpha_1,\ldots,\alpha_q=1}^r\bigl((\Tr(j_{Z_{\alpha_1}}\ldots j_{Z_{\alpha_q}}))^2-
     \Tr(j_{Z_{\alpha_1}}\ldots j_{Z_{\alpha_q}}j_{Z_{\alpha_1}}\ldots j_{Z_{\alpha_q}})\bigr)\\
   &=\tfrac12(I_{k_1\ldots k_q|k_1\ldots k_q}-I_{k_1\ldots k_qk_1\ldots k_q}).
\end{split}
\end{equation*}
\end{proof}

\begin{proof}[Proof of Proposition~\ref{prop:notcurveq}]
Let $N$ and $N'$ be curvature equivalent; we are going to show that they are locally isometric.
Let $F:\g(j)=\v\oplus\z\to \g(j')=\v'\oplus\z'$
be a euclidean isometry of the associated Lie algebras which intertwines the curvature tensors.
Then $F$ also intertwines the Ricci tensors. Note that here in the Heisenberg type case we have
$\Ric(g(j))\restr{\v}=-\frac r2\Id_\v$ and $\Ric(g(j))\restr{\z}=\frac m4\Id_\z$
by Lemma~\ref{lem:firstformulas}(iii), and similarly
for~$j'$. Since $F$ has to
preserve the eigenspace associated to the negative, resp.~positive eigenvalue, we have $F(\v)=\v'$ and $F(\z)=\z'$;
in particular, $m=m'$ and $r=r'$. The restriction of~$F$ to~$\v$ now induces a linear map from
$\v\wedge\v$ to $\v'\wedge\v'$ which intertwines $R(g(j))^{\v\wedge\v}$ and $R(g(j'))^{\v'\wedge\v'}$;
in particular, these operators have the same trace, and so do their $q$-th powers for any~$q$.
Applying Lemma~\ref{lem:TrRvq} in the special case $q:=r$, we conclude
$$I_{k_1\ldots k_r|k_1\ldots k_r}(j)-I_{k_1\ldots k_rk_1\ldots k_r}(j)=
I_{k_1\ldots k_r|k_1\ldots k_r}(j')-I_{k_1\ldots k_rk_1\ldots k_r}(j').
$$
The second terms on each side of this equation coincide by Proposition~\ref{prop:heisImore}(ii).
Thus, the first terms have to coincide, too. By Proposition~\ref{prop:heisImore}(iii) this implies
that $j$ and~$j'$ are equivalent in the sense of Remark~\ref{rem:isometric}(iii); so $N$ and~$N'$
are indeed locally isometric.
\end{proof}

Together with Remark~\ref{rem:heisexist}(i) and Theorem~\ref{thm:zgreater}(i), the previous
proposition implies:

\begin{theorem}
\label{thm:notcurveqinspiteofinvars}
For any $k\in\N$, there exist pairs of locally homogeneous Riemannian manifolds
which are not curvature equivalent, but do not differ in any curvature invariant
of order up to~$2k$.
\end{theorem}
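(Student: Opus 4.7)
The plan is to combine the three ingredients pointed to by the authors: Remark~\ref{rem:heisexist}(i) provides locally nonisometric isospectral pairs of Heisenberg type nilmanifolds with $r$-dimensional centers whenever $r\in\{3,7,11,15,\ldots\}$; Proposition~\ref{prop:notcurveq} says that any two nonisometric Heisenberg type nilmanifolds fail to be curvature equivalent; and Theorem~\ref{thm:zgreater}(i) says that isospectral Heisenberg type pairs with center dimension~$r$ cannot differ in any curvature invariant of order strictly less than~$2r$.

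Given $k\in\N$, my first step is to pick $r$ in the arithmetic progression $\{3,7,11,15,\ldots\}$ with $r>k$; for instance one may take the smallest such $r$, i.e.\ the least element of $\{3,7,11,\ldots\}$ exceeding~$k$. Next, I invoke Remark~\ref{rem:heisexist}(i) to produce a pair $N=(\Gamma\backslash G(j),g(j))$, $N'=(\Gamma'\backslash G(j'),g(j'))$ of isospectral, locally nonisometric nilmanifolds of Heisenberg type with $\dimm\z=r$; concretely, one can take $j=\rho^r_{(a,b)}$ and $j'=\rho^r_{(a',b')}$ with $a+b=a'+b'$ but $\{a,b\}\neq\{a',b'\}$. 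These are locally homogeneous by construction.

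The remaining two verifications are each single applications of earlier results. Since $N$ and $N'$ are not locally isometric, Proposition~\ref{prop:notcurveq} implies that they are not curvature equivalent. On the other hand, since $N$ and $N'$ are isospectral of Heisenberg type with center dimension~$r$, and since $2k<2r$ by our choice of~$r$, Theorem~\ref{thm:zgreater}(i) guarantees that $N$ and $N'$ cannot differ in the value of any curvature invariant of order~$2q$ with $2q\le 2k<2r$. Because local homogeneity renders every curvature invariant a constant function on each manifold, agreement of values is the same as agreement of integrals (modulo the common volume), so the two manifolds really do not differ in any curvature invariant of order up to~$2k$, proving the theorem.

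There is essentially no obstacle here: the statement is a direct corollary, and the only modest bookkeeping is the observation that $\{3,7,11,15,\ldots\}$ is unbounded so that $r>k$ can always be arranged. All the substantial work has already been absorbed into Proposition~\ref{prop:notcurveq} (whose proof uses the trace computation of Lemma~\ref{lem:TrRvq}) and Theorem~\ref{thm:zgreater}(i) (which rests on Proposition~\ref{prop:formofcurvinvs} together with the vanishing statements in Corollary~\ref{cor:heisI}).
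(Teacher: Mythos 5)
Your proposal is correct and follows exactly the paper's own route: the theorem is stated there as an immediate consequence of Remark~\ref{rem:heisexist}(i), Proposition~\ref{prop:notcurveq}, and Theorem~\ref{thm:zgreater}(i), which is precisely the combination you use after choosing $r\in\{3,7,11,15,\ldots\}$ with $r>k$. Nothing is missing.
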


\section*{Appendix}

\begin{proof}[Proof of Remark~\ref{lem:lowerorder}(iv).]\

\noindent
For $A\in\g$, write $R_A:\g\times\g\ni(B,C)\mapsto R(A,B)C\in\g$, and consider the canonical
extension of $\scp$ to tensors of this form. We start by computing individual formulas
for $\<R_A,R_B\>$ because we will need them below in the proof of Lemma~\ref{lem:nablaric}(ii).
For $U,Y\in\v$ we have, by Lemma~\ref{lem:firstformulas}(ii),
\begin{equation*}
  \begin{split}
  \bullet\ &\<R_U\restr{\v\times\v}\,,R_Y\restr{\v\times\v}\>
    =\tsum_{k,\ell,a=1}^m\<R(U,X_k)X_\ell,X_a\>\<R(Y,X_k)X_\ell,X_a\>\\
  &=\tfrac1{16}\tsum_{k,\ell,a=1}^m\tsum_{\beta,\gamma=1}^r
    \bigl(\<j_{Z_\beta}U,X_a\>\<j_{Z_\beta}X_k,X_\ell\>
    -\<j_{Z_\beta}U,X_\ell\>\<j_{Z_\beta}X_k,X_a\>\\
  &\quad\quad\quad-2\<j_{Z_\beta}U,X_k\>\<j_{Z_\beta}X_\ell,X_a\>\bigr)\cdot\\
  &\quad\quad\quad\cdot\bigl(
    \<j_{Z_\gamma}Y,X_a\>\<j_{Z_\gamma}X_k,X_\ell\>
    -\<j_{Z_\gamma}Y,X_\ell\>\<j_{Z_\gamma}X_k,X_a\>
    -2\<j_{Z_\gamma}Y,X_k\>\<j_{Z_\gamma}X_\ell,X_a\>\bigr)\\
  &=\tfrac1{16}\tsum_{\beta,\gamma=1}^r\bigl(
    (1^2+1^2+2^2)\<j_{Z_\beta}U,j_{Z_\gamma}Y\>\<j_{Z_\beta},j_{Z_\gamma}\>\\
  &\quad\quad\quad
    +(1-2+1-2-2-2)\<j_{Z_\beta}U,j_{Z_\gamma}j_{Z_\beta}j_{\Z_\gamma}Y\>\bigr)\\
  &=\tfrac38\tsum_{\beta,\gamma=1}^r\bigl(\<j_{Z_\beta}j_{Z_\gamma}U,Y\>\Tr(
    j_{Z_\beta}j_{Z_\gamma})+\<j_{Z_\beta}j_{Z_\gamma}j_{Z_\beta}j_{Z_\gamma}U,Y\>
    \bigr),\\
  \bullet\ &\<R_U\restr{\v\times\z}\,,R_Y\restr{\v\times\z}\>
    =\tsum_{k=1}^m\tsum_{\beta,\gamma=1}^r
    \<R(U,X_k)Z_\beta,Z_\gamma\>\<R(Y,X_k)Z_\beta,Z_\gamma\>\\
  &=\tfrac1{16}\tsum_{k=1}^m\tsum_{\beta,\gamma=1}^r
    \<[j_{Z_\beta},j_{Z_\gamma}]U,X_k\>\<[j_{Z_\beta},j_{Z_\gamma}]Y,X_k\>
  =-\tfrac1{16}\tsum_{\beta,\gamma=1}^r \<[j_{Z_\beta},j_{Z_\gamma}]^2 U,Y\>\\
  &=-\tfrac18\tsum_{\beta,\gamma=1}^r\<j_{Z_\beta}j_{Z_\gamma}j_{Z_\beta}j_{Z_\gamma}U,Y\>
  +\tfrac18\tsum_{\beta=1}^r \<j_{Z_\beta}Jj_{Z_\beta}U,Y\>,\\
  \bullet\ &\<R_U\restr{\z\times\v}\,,R_Y\restr{\z\times\v}\>
  +\<R_U\restr{\z\times\z}\,,R_Y\restr{\z\times\z}\>
    =2\tsum_{k=1}^m\tsum_{\beta,\gamma=1}^r\<R(U,Z_\beta)Z_\gamma,X_k\>\<R(Y,Z_\beta)Z_\gamma,X_k\>\quad\quad\;\\
  &=\tfrac18\tsum_{k=1}^m\tsum_{\beta,\gamma=1}^r\<j_{Z_\gamma}U,j_{Z_\beta}X_k\>\<j_{Z_\gamma}Y,j_{Z_\beta}X_k\>
    =\tfrac18\tsum_{\beta,\gamma=1}^r\<j_{Z_\beta}j_{Z_\gamma}U,j_{Z_\beta}j_{Z_\gamma}Y\>\\
  &=\tfrac18\tsum_{\beta=1}^r\<j_{Z_\beta}Jj_{Z_\beta}U,Y\>.
  \end{split}
\end{equation*}
Hence,
\begin{equation}
\label{eq:RURY}
\<R_U,R_Y\>=
\tsum_{\beta,\gamma=1}^r\bigl(\tfrac38\<j_{Z_\beta}j_{Z_\gamma}U,Y\>\Tr(j_{Z_\beta}j_{Z_\gamma})
  +\tfrac14\<j_{Z_\beta}j_{Z_\gamma}j_{Z_\beta}j_{Z_\gamma}U,Y\>\bigr)
  +\tfrac14\tsum_{\beta=1}^r\<j_{Z_\beta}Jj_{Z_\beta}U,Y\>.
\end{equation}
For $W\in\z$ we have $R_W\restr{\z\times\z}=0$ and
\begin{equation*}
  \begin{split}
  \bullet\ &|R_W\restr{\v\times\v}|^2+|R_W\restr{\v\times\z}|^2
    =2\tsum_{k,\ell=1}^m\tsum_{\alpha=1}^r\<R(W,X_k)X_\ell,Z_\alpha\>^2
    =\tfrac18\tsum_{k,\ell=1}^m\tsum_{\alpha=1}^r\<j_W X_\ell,j_{Z_\alpha}X_k\>^2\\
  &=\tfrac18\tsum_{\alpha=1}^2|j_Wj_{Z_\alpha}|^2=\tfrac18\Tr(Jj_W^2),\\
  \bullet\ &|R_W\restr{\z\times\v}|^2=\tsum_{k,\ell=1}^m\tsum_{\alpha=1}^r\<R(W,Z_\alpha)X_k,X_\ell\>^2
    =\tfrac1{16}\tsum_{k,\ell=1}^m\tsum_{\alpha=1}^r\<[j_W,j_{Z_\alpha}]X_k,X_\ell\>^2\\
  &=\tfrac1{16}|j_Wj_{Z_\alpha}-j_{Z_\alpha}j_W|^2=\tfrac18\Tr(Jj_W^2)-\tfrac18\tsum_{\alpha=1}^r
  \Tr(j_Wj_{Z_\alpha}j_Wj_{Z_\alpha})
  \end{split}
\end{equation*}
and thus for $Z,W\in\z$, using polarization,
\begin{equation}
\label{eq:RZRW}
\<R_Z,R_W\>=\tfrac18\Tr(J(j_Zj_W+j_Wj_Z))-\tfrac18\tsum_{\alpha=1}^r\Tr(j_Zj_{Z_\alpha}j_Wj_{Z_\alpha}).
\end{equation}

Moreover, $\<R_X,R_Z\>=0$ for all $X\in\v$, $Z\in\z$ by Lemma~\ref{lem:firstformulas}(ii).
Using \eqref{eq:RURY} and~\eqref{eq:RZRW}, we obtain
\begin{equation*}
  \begin{split}
  |R|^2&=\tsum_{k=1}^m\<R_{X_k},R_{X_k}\>+\tsum_{\alpha=1}^r\<R_{Z_\alpha},R_{Z_\alpha}\>
    =\tfrac38I_{\alpha\beta|\alpha\beta}+\tfrac14I_{\alpha\beta\alpha\beta}+\tfrac14I_{\alpha\alpha\beta\beta}
    +\tfrac14I_{\alpha\alpha\beta\beta}-\tfrac18I_{\alpha\beta\alpha\beta},
  \end{split}
\end{equation*}
from which the statement follows.
\end{proof}

\begin{proof}[Proof of Lemma~\ref{lem:nablaric}.]\

\noindent
(i) First note that by Lemma~\ref{lem:firstformulas},
\begin{equation*}
  \begin{split}
  &\tsum_{k,\ell=1}^m\tsum_{\alpha,\beta=1}^r\ric(X_k,X_\ell)\ric(Z_\alpha,Z_\beta)
  \<R(X_k,X_\ell)Z_\alpha,Z_\beta\>\\
  &=\tfrac1{32}\tsum_{k,\ell=1}^m\tsum_{\alpha,\beta=1}^r
    \<JX_k,X_\ell\>\Tr(j_{Z_\alpha}j_{Z_\beta})\<[j_{Z_\alpha},j_{Z_\beta}]X_k,X_\ell\>
  =\frac1{32}\Tr(j_{Z_\alpha}j_{Z_\beta})\Tr([j_{Z_\alpha},j_{Z_\beta}]J)=0,
  \end{split}
\end{equation*}
and $\ric(\v,\z)=0$. Thus, recalling that $J$ is symmetric,
\begin{equation*}
  \begin{split}
  (*)={}&\tsum_{k,\ell,a,b=1}^m\ric(X_k,X_\ell)\ric(X_a,X_b)\<R(X_k,X_a)X_\ell,X_b\>\\
  ={}&\tfrac14\tsum_{k,\ell,a,b=1}^m\<JX_k,X_\ell\>\<JX_a,X_b\>\cdot\\
  &\cdot\tsum_{\beta=1}^r
    \bigl(\tfrac14\<j_{Z_\beta}X_a,X_\ell\>\<j_{Z_\beta}X_k,X_b\>-\tfrac14
    \<j_{Z_\beta}X_k,X_\ell\>\<j_{Z_\beta}X_a,X_b\>-\tfrac12\<j_{Z_\beta}X_k,X_a\>\<j_{Z_\beta}X_\ell,X_b\>\bigr)\\
  ={}&\tfrac1{16}\tsum_{k,a=1}^m\sum_{\beta=1}^r\bigl(\<j_{Z_\beta}X_a,JX_k\>\<j_{Z_\beta}X_k,JX_a\>
   -\<j_{Z_\beta}X_k,JX_k\>\<j_{Z_\beta}X_a,JX_a\>\\
   &\hphantom{\tsum_{k,a=1}^m\sum_{\beta=1}^r\bigl(}-2\<j_{Z_\beta}X_k,X_a\>\<j_{Z_\beta}JX_k,JX_a\>\bigr)\\
  ={}&\tfrac1{16}\tsum_{\beta=1}^r(\<-j_{Z_\beta}J,Jj_{Z_\beta}\>-0-2\<j_{Z_\beta},Jj_{Z_\beta}J\>)
  =\tfrac1{16}\tsum_{\beta=1}^r3\Tr(Jj_{Z_\beta}Jj_{Z_\beta})
  =\tfrac3{16}I_{\alpha\alpha\beta\gamma\gamma\beta}.
  \end{split}
\end{equation*}

\noindent
(ii)
By definition of $(**)$ and by Lemma~\ref{lem:firstformulas}(iii),
\begin{equation*}
  (**)=\tsum_{k,\ell=1}^m\tfrac12\<JX_k,X_\ell\>\<R_{X_k},R_{X_\ell}\>
  -\tsum_{\alpha,\beta=1}^r\tfrac14\Tr(j_{Z_\alpha}j_{Z_\beta})\<R_{Z_\alpha},R_{Z_\beta}\>.
\end{equation*}
Thus, using \eqref{eq:RURY} and~\eqref{eq:RZRW},
\begin{equation*}
  \begin{split}
  (**)={}&\tfrac12\tsum_{k=1}^m\Bigl(\tsum_{\beta,\gamma=1}^r
   \bigl(\tfrac38\<j_{Z_\beta}j_{Z_\gamma}X_k,JX_k\>\Tr(j_{Z_\beta}j_{Z_\gamma})
   +\tfrac14\<j_{Z_\beta}j_{Z_\gamma}j_{Z_\beta}j_{Z_\gamma}X_k,JX_k\>\bigr)\\
   &\hphantom{\tsum_{k=1}^m\Bigl(}
    +\tfrac14\tsum_{\beta=1}^r\<j_{Z_\beta}Jj_{Z_\beta}X_k,JX_k\>\Bigr)\\
   &-\tfrac14\tsum_{\beta,\gamma=1}^r\Tr(j_{Z_\beta}j_{Z_\gamma})\bigl(\tfrac18\Tr(J(j_{Z_\beta}j_{Z_\gamma}
    +j_{Z_\gamma}j_{Z_\beta}))-\tfrac18\tsum_{\alpha=1}^r\Tr(j_{Z_\beta}j_{Z_\alpha}j_{Z_\gamma}j_{Z_\alpha})\bigr)\\
   ={}&\tfrac3{16}I_{\alpha\alpha\beta\gamma|\beta\gamma}+\tfrac18I_{\alpha\alpha\beta\gamma\beta\gamma}
    +\tfrac18I_{\alpha\alpha\beta\gamma\gamma\beta}
    -\tfrac1{16}I_{\alpha\alpha\beta\gamma|\beta\gamma}+\tfrac1{32}I_{\beta\alpha\gamma\alpha|\beta\gamma}\\
    ={}&\tfrac18I_{\alpha\alpha\beta\gamma|\beta\gamma}+\tfrac18I_{\alpha\alpha\beta\gamma\beta\gamma}
    +\tfrac18I_{\alpha\alpha\beta\gamma\gamma\beta}+\tfrac1{32}I_{\alpha\gamma\beta\gamma|\alpha\beta}.
  \end{split}
\end{equation*}

\noindent
(iii)
For $X,Y\in\v$ and $Z\in\z$, we have
$(\nabla_Y\ric)\restr{\v\times\v}=0$, $(\nabla_Y\ric)\restr{\z\times\z}=0$ and
\begin{equation*}
  \begin{split}
  ((\nabla_Y\ric)(Z,X))^2
  &=(-\tfrac12\ric(X,j_ZY)+\tfrac12\ric([Y,X],Z))^2\\
  &=(-\tfrac14\<JX,j_ZY\>-\tfrac18\tsum_{\beta=1}^r\Tr(j_{Z_\beta}j_Z)\<j_{Z_\beta}Y,X\>)^2,\text{ hence}
  \end{split}
\end{equation*}
\begin{equation*}
  \begin{split}
  |(\nabla_Y\ric)|^2={}&2\tsum_{\ell=1}^m\tsum_{\gamma=1}^r\bigl(\tfrac1{16}\<JX_\ell,j_{Z_\gamma}Y\>^2
  +\tfrac1{16}
  \tsum_{\beta=1}^r\<JX_\ell,j_{Z_\gamma}Y\>\Tr(j_{Z_\beta}j_{Z_\gamma})\<j_{Z_\beta}Y,X_\ell\>\\
  &\hphantom{2\tsum_{\ell=1}^m\tsum_{\gamma=1}^r}
  +\tfrac1{64}\tsum_{\alpha,\beta=1}^r\Tr(j_{Z_\alpha}j_{Z_\gamma})\Tr(j_{Z_\beta}j_{Z_\gamma})
  \<j_{Z_\alpha}Y,X_\ell\>\<j_{Z_\beta}Y,X_\ell\>\bigr)\\
  ={}&\tsum_{\gamma=1}^r\bigl(\tfrac18|Jj_{Z_\gamma}Y|^2
  +\tfrac18\tsum_{\beta=1}^r\Tr(j_{Z_\beta}j_{Z_\gamma})\<j_{Z_\beta}Y,Jj_{Z_\gamma}Y\>\\
  &\hphantom{\tsum_{\gamma=1}^r\bigl(}
  +\tfrac1{32}\tsum_{\alpha,\beta=1}^r\Tr(j_{Z_\alpha}j_{Z_\gamma})\Tr(j_{Z_\beta}j_{Z_\gamma})
  \<j_{Z_\alpha}Y,j_{Z_\beta}Y\>\bigr).\text{ Thus,}\\
  |(\nabla\ric)\restr{\v}|^2={}&\tsum_{\gamma=1}^r\bigl(\tfrac18|Jj_{Z_\gamma}|^2
  -\tfrac18\tsum_{\beta=1}^r\Tr(j_{Z_\beta}j_{Z_\gamma})\Tr(Jj_{Z_\gamma}j_{Z_\beta})\\
  &\hphantom{\tsum_{\gamma=1}^r\bigl(}-\tfrac1{32}
    \tsum_{\alpha,\beta=1}^r\Tr(j_{Z_\alpha}j_{Z_\gamma})\Tr(j_{Z_\beta}j_{Z_\gamma})
    \Tr(j_{Z_\alpha}j_{Z_\beta})\bigr)\\
  ={}&-\tfrac18\Tr(J^3)-\tfrac18I_{\alpha\alpha\beta\gamma|\beta\gamma}
    -\tfrac1{32}I_{\alpha\beta|\alpha\gamma|\beta\gamma},
  \end{split}
\end{equation*}
where $|(\nabla\ric)\restr{\v}|^2$ denotes $\sum_{k=1}^m|\nabla_{X_k}\ric|^2$.
For $X,Y\in\v$ and $W\in\z$, we have
$(\nabla_W\ric)\restr{\v\times\z}=0$, $(\nabla_W\ric)\restr{\z\times\v}=0$, $(\nabla_W\ric)\restr{\z\times\z}=0$, and
\begin{equation*}
  \begin{split}
  ((\nabla_W\ric)(X,Y))^2
  &=(-\tfrac12\ric(j_WX,Y)-\tfrac12\ric(X,j_WY))^2
  =(\tfrac14\<Jj_WX,Y\>+\tfrac14\<JX,j_WY\>)^2,\text{ hence}
  \end{split}
\end{equation*}
\begin{equation*}
  \begin{split}
  |\nabla_W\ric|^2={}&\tfrac1{16}\tsum_{k,\ell=1}^m
    \bigl(\<Jj_WX_k,X_\ell\>^2+\<JX_k,j_WX_\ell\>^2+2\<Jj_WX_k,X_\ell\>\<JX_k,j_WX_\ell\>\bigr)\\
  ={}&\tfrac18|Jj_W|^2-\tfrac18\<Jj_W,j_WJ\>.\text{ Thus,}\\
  |(\nabla\ric)\restr{\z}|^2={}&\tsum_{\beta=1}^r
    \bigl(\tfrac18|Jj_{Z_\beta}|^2+\tfrac18\Tr(Jj_{Z_\beta}Jj_{Z_\beta})\bigr)
  =-\tfrac18\Tr(J^3)+\tfrac18I_{\alpha\alpha\beta\gamma\gamma\beta},
  \end{split}
\end{equation*}
where $|(\nabla\ric)\restr{\z}|^2$ denotes $\sum_{\alpha=1}^r|\nabla_{Z_\alpha}\ric|^2$.
So,
$$|\nabla\ric|^2=|(\nabla\ric)\restr{\v}|^2+|(\nabla\ric)\restr{\z}|^2
=-\tfrac14\Tr(J^3)-\tfrac18I_{\alpha\alpha\beta\gamma|\beta\gamma}
-\tfrac1{32}I_{\alpha\beta|\alpha\gamma|\beta\gamma}+\tfrac18I_{\alpha\alpha\beta\gamma\gamma\beta}.
$$
\end{proof}

\begin{proof}[Proof of Lemma~\ref{lem:nablar}.]\

\noindent
(i)
The various contributions
\begin{equation}
\label{eq:contribnablar}
\tsum\<(\nabla_AR)(B,C)D,E\>^2
\end{equation}
to $|\nabla R|^2$,
where each of $A,B,C,D,E$ runs through either the orthonormal basis $\{X_1,\ldots,X_m\}$ of~$\v$ or
the orthonormal basis $\{Z_1,\ldots,Z_r\}$ of~$\z$, can by Remark~\ref{rem:formj}
be nonzero only in the cases where~$\v$ occurs an even number of times.
If $\v$ occurs exactly twice then, again by Remark~\ref{rem:formj}, each
$\<(\nabla_AR)(B,C)D,E\>$ is a linear combination of terms of the type
$\<j_{Z_{\alpha_1}}j_{Z_{\alpha_2}}j_{Z_{\alpha_3}}X_{\ell_1}X_{\ell_2}\>$,
where $(X_{\ell_1},X_{\ell_2},Z_{\alpha_1},Z_{\alpha_2},Z_{\alpha_3})$ is just some permutation
of $(A,B,C,D,E)$. The sum in~\eqref{eq:contribnablar} will thus be a linear
combination of sums of the type
$$\tsum\<j_{Z_{\alpha_{s_1}}}j_{Z_{\alpha_{s_2}}}j_{Z_{\alpha_{s_3}}}X_{\ell_{u_1}},X_{\ell_{u_2}}\>
\<j_{Z_{\alpha_{s_4}}}j_{Z_{\alpha_{s_5}}}j_{Z_{\alpha_{s_6}}}X_{\ell_{u_1}},X_{\ell_{u_2}}\>,
$$
where $(s_1,s_2,s_3)$ and $(s_4,s_5,s_6)$ are the same up to permuation,
and the summation is done w.r.t.~pairs of equal indices $s_i$ and~$u_j$.
Summation over equal pairs of $u_j$ yields
$$-\tsum\Tr(j_{Z_{\alpha_{s_6}}}j_{Z_{\alpha_{s_5}}}j_{Z_{\alpha_4}}j_{Z_{\alpha_{s_1}}}
  j_{Z_{\alpha_{s_2}}}j_{Z_{\alpha_{s_3}}}),
$$
which equals $-I_{s_6s_5s_4s_1s_2s_3}$, one of our invariants from Definition~\ref{def:Ik}
in which only subtuples of even length occur (in this case, only
one subtuple, and this one of length six). Hence, sums as in~\eqref{eq:contribnablar} with exactly two
occurrences of~$\v$ contribute only to the term~$L_1$ from the assertion.
Therefore it remains to consider sums as in~\eqref{eq:contribnablar} with
exactly four occurrences of~$\v$.

Due to the symmetries of~$R$, the contribution of such sums is equal to
\begin{equation}
\label{eq:twopartsnablar}
\tsum\<(\nabla_WR)(X,Y)U,V\>^2
+4\tsum\<(\nabla_XR)(W,Y)U,V\>^2,
\end{equation}
where both sums are taken over $X,Y,U,V\in\{X_1,\ldots,X_m\}$, $W\in\{Z_1,\ldots,Z_r\}$.
For the first term in~\eqref{eq:twopartsnablar}, we note using Lemma~\ref{lem:firstformulas}(i),~(ii)
and the skew-symmetry of the maps $j_W$,~$j_{Z_\alpha}$
that $\<(\nabla_WR)(X,Y)U,V\>$ is the sum of the following twelve summands:
\begin{itemize}
\item[(a)]
$-\<\nabla_W\nabla_X\nabla_YU,V\>=-\frac18\tsum_{\beta}\<j_Wj_{Z_\beta}X,V\>\<j_{Z_\beta}Y,U\>$,
\item[(b)]
$\hphantom{-\<\nabla_W\nabla_X\nabla_YU,V\>}\llap{$\<\nabla_W\nabla_Y\nabla_XU,V\>$}
  =\hphantom{-}\frac18\tsum_\beta\<j_Wj_{Z_\beta}Y,V\>\<j_{Z_\beta}X,U\>$,
\item[(c)]
$\hphantom{-\<\nabla_W\nabla_X\nabla_YU,V\>}\llap{$\<\nabla_W\nabla_{[X,Y]}U,V\>$}
  =\hphantom{-}\frac14\tsum_\beta\<j_Wj_{Z_\beta}U,V\>\<j_{Z_\beta}X,Y\>$,
\item[(d)]
$\hphantom{-\<\nabla_W\nabla_X\nabla_YU,V\>}\llap{$\<\nabla_{\nabla_WX}\nabla_YU,V\>$}
  =\hphantom{-}\frac18\tsum_\beta\<j_{Z_\beta}j_WX,V\>\<j_{Z_\beta}Y,U\>$,
\item[(e)]
$\hphantom{-\<\nabla_W\nabla_X\nabla_YU,V\>}\llap{$-\<\nabla_Y\nabla_{\nabla_WX}U,V\>$}
  =-\frac18\tsum_\beta\<j_{Z_\beta}j_WX,U\>\<j_{Z_\beta}Y,V\>$,
\item[(f)]
$\hphantom{-\<\nabla_W\nabla_X\nabla_YU,V\>}\llap{$-\<\nabla_{[\nabla_WX,Y]}U,V\>$}
  =-\frac14\tsum_\beta\<j_{Z_\beta}j_WX,Y\>\<j_{Z_\beta}U,V\>$,
\item[(g)]
$\hphantom{-\<\nabla_W\nabla_X\nabla_YU,V\>}\llap{$\<\nabla_X\nabla_{\nabla_WY}U,V\>$}
  =\hphantom{-}\frac18\tsum_\beta\<j_{Z_\beta}j_WY,U\>\<j_{Z_\beta}X,V\>$,
\item[(h)]
$\hphantom{-\<\nabla_W\nabla_X\nabla_YU,V\>}\llap{$-\<\nabla_{\nabla_WY}\nabla_XU,V\>$}
  =-\frac18\tsum_\beta\<j_{Z_\beta}j_WY,V\>\<j_{Z_\beta}X,U\>$,
\item[(i)]
$\hphantom{-\<\nabla_W\nabla_X\nabla_YU,V\>}\llap{$-\<\nabla_{[X,\nabla_WY]}U,V\>$}
  =\hphantom{-}\frac14\tsum_\beta\<j_Wj_{Z_\beta}X,Y\>\<j_{Z_\beta}U,V\>$,
\item[(j)]
$\hphantom{-\<\nabla_W\nabla_X\nabla_YU,V\>}\llap{$\<\nabla_X\nabla_Y\nabla_WU,V\>$}
  =-\frac18\tsum_\beta\<j_Wj_{Z_\beta}Y,U\>\<j_{Z_\beta}X,V\>$,
\item[(k)]
$\hphantom{-\<\nabla_W\nabla_X\nabla_YU,V\>}\llap{$-\<\nabla_Y\nabla_X\nabla_WU,V\>$}
  =\hphantom{-}\frac18\tsum_\beta\<j_Wj_{Z_\beta}X,U\>\<j_{Z_\beta}Y,V\>$,
\item[(l)]
$\hphantom{-\<\nabla_W\nabla_X\nabla_YU,V\>}\llap{$-\<\nabla_{[X,Y]}\nabla_WU,V\>$}
  =-\frac14\tsum_\beta\<j_{Z_\beta}j_WU,V\>\<j_{Z_\beta}X,Y\>$,
\end{itemize}
where the sums are taken over $\beta\in\{1,\ldots,r\}$.
Now $\<(\nabla_WR)(X,Y)U,V\>$ is the square of the sum of the twelve terms.
The square of each single one of them will just lead to a contribution to~$L_1$:
For example, the square of the term in~(a) is $\frac1{64}$ times
$$\tsum_{\beta,\gamma}
\<j_Wj_{Z_\beta}X,V\>\<j_{Z_\beta}Y,U\>\<j_Wj_{Z_\gamma}X,V\>\<j_{Z_\gamma}Y,U\>,
$$
which after summation over $X,Y,U,V\in\{X_1,\ldots,X_m\}$ gives
$-\sum_{\beta,\gamma}\Tr(j_Wj_Wj_{Z_\beta}j_{Z_\gamma})\Tr(j_{Z_\beta}j_{Z_\gamma})$;
summation over $W\in\{Z_1,\ldots,Z_r\}$ thus yields
$-I_{\alpha\alpha\beta\gamma|\beta\gamma}$, another invariant
in which only subtuples of even lengths (here, four and two) occur.

Next, consider the product of the terms in (a) and~(b) which is $-\frac1{64}$ times
$$\tsum_{\beta,\gamma}
\<j_Wj_{Z_\beta}X,V\>\<j_{Z_\beta}Y,U\>\<j_Wj_{Z_\gamma}Y,V\>\<j_{Z_\gamma}X,U\>.
$$
One easily checks that this leads to an invariant with just one subtuple of length six.
The technical reason is that here, there is no way to group the four factors into subsets which
would not be linked to each other by the occurrence of any common vectors from $\{X,Y,U,V\}$.

The only pairings of different terms from (a)--(l) above where this does not happen are the
following twelve:
\begin{equation}
\label{eq:pairings}
((a)\text{ or }(d))\longleftrightarrow((g)\text{ or }(j)),\quad
((b)\text{ or }(h))\longleftrightarrow((e)\text{ or }(k)),\quad
((c)\text{ or }(l))\longleftrightarrow((f)\text{ or }(i)),
\end{equation}
For example, the product of the terms in (a) and (g) is
\begin{equation}
\label{eq:a-g}
-\tfrac1{64}\tsum_{\beta,\gamma}
\<j_Wj_{Z_\beta}X,V\>\<j_{Z_\beta}Y,U\>\<j_{Z_\gamma}j_WY,U\>\<j_{Z_\gamma}X,V\>
\end{equation}
which after summation over $X,Y,U,V$ becomes
$-\frac1{64}\tsum_{\beta,\gamma}\Tr(j_{Z_\gamma}j_Wj_{Z_\beta})\Tr(j_{Z_\beta}j_{Z_\gamma}j_W)$.
Summation over $W$ finally yields $-\tfrac1{64}I_{\gamma\alpha\beta|\beta\gamma\alpha}
=-\tfrac1{64}I_{\alpha\beta\gamma|\alpha\beta\gamma}$. Similarly, the product of the
terms in (a) and (j) is
$$\tfrac1{64}\tsum_{\beta,\gamma}
\<j_Wj_{Z_\beta}X,V\>\<j_{Z_\beta}Y,U\>\<j_Wj_{Z_\gamma}Y,U\>\<j_{Z_\gamma}X,V\>.
$$
which gives $-\frac1{64}\Tr(j_{Z_\gamma} j_W j_{Z_\beta})\Tr(j_{Z_\gamma} j_W j_{\beta})
=-\frac1{64}I_{\alpha\beta\gamma|\alpha\beta\gamma}$ again.
For each of the pairings from \eqref{eq:pairings}, note that whenever the two
terms to be paired differ in sign, they also differ in the order of $j_W$ and $j_{Z_\beta}$
in their first factors. Just as we saw for $(a)\leftrightarrow(g)$ and $(a)\leftrightarrow(j)$,
this leads each time to a negative multiple of $I_{\alpha\beta\gamma|\alpha\beta\gamma}$.
Altogether, we obtain
$$2\cdot\bigl(4\cdot(-\tfrac1{64})+4\cdot(-\tfrac1{64})+4\cdot(-\tfrac1{16})\bigr)I_{\alpha\beta\gamma|\alpha\beta\gamma}
=-\tfrac34 I_{\alpha\beta\gamma|\alpha\beta\gamma}
$$
as the contribution to $|\nabla R|^2$ of the first summand in~\eqref{eq:twopartsnablar}, apart from
its contributions to~$L_1$.

For the second summand in~\eqref{eq:twopartsnablar}, we compute that $\<(\nabla_X R)(W,Y)U,V\>$ is
the sum of
\begin{itemize}
\item[(a')]
$-\<\nabla_X\nabla_W\nabla_YU,V\>
  =\hphantom{-}0$,
\item[(b')]
$\hphantom{-\<\nabla_W\nabla_X\nabla_YU,V\>}\llap{$\<\nabla_X\nabla_Y\nabla_WU,V\>$}
  =-\frac18\tsum_\beta\<j_Wj_{Z_\beta}Y,U\>\<j_{Z_\beta}X,V\>$,
\item[(c')]
$\hphantom{-\<\nabla_W\nabla_X\nabla_YU,V\>}\llap{$\<\nabla_X\nabla_{[W,Y]}U,V\>$}
  =\hphantom{-}0$,
\item[(d')]
$\hphantom{-\<\nabla_W\nabla_X\nabla_YU,V\>}\llap{$\<\nabla_{\nabla_XW}\nabla_YU,V\>$}
  =\hphantom{-}\frac18\tsum_\beta\<j_{Z_\beta}j_WX,V\>\<j_{Z_\beta}Y,U\>$,
\item[(e')]
$\hphantom{-\<\nabla_W\nabla_X\nabla_YU,V\>}\llap{$-\<\nabla_Y\nabla_{\nabla_XW}U,V\>$}
  =-\frac18\tsum_\beta\<j_{Z_\beta}j_WX,U\>\<j_{Z_\beta}Y,V\>$,
\item[(f')]
$\hphantom{-\<\nabla_W\nabla_X\nabla_YU,V\>}\llap{$-\<\nabla_{[\nabla_XW,Y]}U,V\>$}
  =-\frac14\tsum_\beta\<j_{Z_\beta}j_WX,Y\>\<j_{Z_\beta}U,V\>$,
\item[(g')]
$\hphantom{-\<\nabla_W\nabla_X\nabla_YU,V\>}\llap{$\<\nabla_W\nabla_{\nabla_XY}U,V\>$}
  =\hphantom{-}\frac18\tsum_\beta\<j_Wj_{Z_\beta}U,V\>\<j_{Z_\beta}X,Y\>$,
\item[(h')]
$\hphantom{-\<\nabla_W\nabla_X\nabla_YU,V\>}\llap{$-\<\nabla_{\nabla_XY}\nabla_WU,V\>$}
  =-\frac18\tsum_\beta\<j_{Z_\beta}j_WU,V\>\<j_{Z_\beta}X,Y\>$,
\item[(i')]
$\hphantom{-\<\nabla_W\nabla_X\nabla_YU,V\>}\llap{$-\<\nabla_{[W,\nabla_XY]}U,V\>$}
  =\hphantom{-}0$,
\item[(j')]
$\hphantom{-\<\nabla_W\nabla_X\nabla_YU,V\>}\llap{$\<\nabla_W\nabla_Y\nabla_XU,V\>$}
  =\hphantom{-}\frac18\tsum_\beta\<j_Wj_{Z_\beta}Y,V\>\<j_{Z_\beta}X,U\>$,
\item[(k')]
$\hphantom{-\<\nabla_W\nabla_X\nabla_YU,V\>}\llap{$-\<\nabla_Y\nabla_W\nabla_XU,V\>$}
  =\hphantom{-}0$,
\item[(l')]
$\hphantom{-\<\nabla_W\nabla_X\nabla_YU,V\>}\llap{$-\<\nabla_{[W,Y]}\nabla_XU,V\>$}
  =\hphantom{-}0$.
\end{itemize}
Similarly as above,
the only pairings which do not just contribute to~$L_1$ now are
$$(b') \longleftrightarrow (d'),\quad
  (e') \longleftrightarrow (j'),\quad
  (f') \longleftrightarrow ((g')\text{ or }(h')).
$$
Again, each of these pairings gives a negative multiple of~$I_{\alpha\beta\gamma|\alpha\beta\gamma}$.
All in all, we obtain
$$4\cdot 2\cdot\bigl(-\tfrac1{64}-\tfrac1{64}+2\cdot(-\tfrac1{32})\bigr)I_{\alpha\beta\gamma|\alpha\beta\gamma}
  =-\tfrac34 I_{\alpha\beta\gamma|\alpha\beta\gamma}
$$
as the contribution to $|\nabla R|^2$ of the second summand in~\eqref{eq:twopartsnablar}, apart from its
contributions to~$L_1$. The statement now follows by $-\frac34-\frac34=-\frac32$.

(ii)
The various contributions
\begin{equation}
\label{eq:contribrhat}
\tsum\<R(A,B)C,D\>\<R(C,D)E,F\>\<R(E,F)A,B\>
\end{equation}
to $\hat R$, where each of $A,B,C,D,E,F$ runs through either the orthonormal
basis $\{X_1,\ldots, X_m\}$ of~$\v$ or the orthonormal basis $\{Z_1,\ldots,Z_r\}$ of~$\z$,
can by Lemma~\ref{lem:firstformulas}(ii) be nonzero only in the cases where each of the
tuples
\begin{equation}
\label{eq:tuplesrhat}
(A,B,C,D),\quad (C,D,E,F), \quad (E,F,A,B)
\end{equation}
contains either two or four vectors from~$\v$.

If each of them contains exactly two vectors from~$\v$, then each summand in~\eqref{eq:contribrhat}
is, again by Lemma~\ref{lem:firstformulas}(ii), a linear combination of products
of three terms of the form $\<j_{Z_{\alpha_1}}j_{Z_{\alpha_2}}X_{\ell_1},X_{\ell_2}\>$.
The sum in~\eqref{eq:contribrhat} will thus be a linear combination of sums of the type
$$\tsum\<j_{Z_{\alpha_{s_1}}}j_{Z_{\alpha_{s_2}}}X_{\ell_{u_1}},X_{\ell_{u_2}}\>
\<j_{Z_{\alpha_{s_3}}}j_{Z_{\alpha_{s_4}}}X_{\ell_{u_3}},X_{\ell_{u_4}}\>
\<j_{Z_{\alpha_{s_5}}}j_{Z_{\alpha_{s_6}}}X_{\ell_{u_5}},X_{\ell_{u_6}}\>,
$$
with each $s_i$ and each $u_j$ occurring exactly twice. Here, summation over equal pairs of~$u_j$
will obviously always lead to invariants $I_{k_1\ldots k_\lambda|\ldots|k_\mu\ldots k_6}$ in which
all subtuples are of even length ($6$, or $4$~and~$2$, or three times~$2$).
Hence, such sums will contribute only to the term~$L_2$ from the assertion.

So let at least one of the tuples from~\eqref{eq:tuplesrhat} consists of vectors in~$\v$.
If $A,B,C,D\in\v$ then either $E,F$ must both be in~$\v$ or both in~$\z$. Therefore, the
contributions of sums as in~\eqref{eq:contribrhat} where at least one of the
tuples from~\eqref{eq:tuplesrhat} consists of vectors in~$\v$ is equal to
\begin{equation}
\label{eq:twopartsrhat}
\tsum\<R(X,Y)U,V\>\<R(U,V)S,T\>\<R(S,T)X,Y\>
+3\tsum\<R(X,Y)U,V\>\<R(U,V)Z,W\>\<R(Z,W)X,Y\>,
\end{equation}
where the first sum is taken over $X,Y,U,V,S,T\in\{X_1,\ldots,X_m\}$ and the second sum over
$X,Y,U,V\allowbreak\in\{X_1,\ldots,X_m\}$ and $Z,W\in\{Z_1,\ldots,Z_r\}$.
For the first term in~\eqref{eq:twopartsrhat}, we note that
\begin{equation}
\label{eq:threetimesthree}
  \begin{split}
  \<R(X,Y)U,&V\>\<R(U,V)S,T\>\<R(S,T)X,Y\>=\\
  \tsum_{\alpha,\beta,\gamma=1}^r\hphantom{\cdot}
    &\bigl(\tfrac14\<j_{Z_\alpha}Y,U\>\<j_{Z_\alpha}X,V\>
    -\tfrac14\<j_{Z_\alpha}X,U\>\<j_{Z_\alpha}Y,V\>
    -\tfrac12\<j_{Z_\alpha}X,Y\>\<j_{Z_\alpha}U,V\>\bigr)\\
  \cdot&\bigl(\tfrac14\<j_{Z_\beta}V,S\>\<j_{Z_\beta}U,T\>
    -\tfrac14\<j_{Z_\beta}U,S\>\<j_{Z_\beta}V,T\>
    -\tfrac12\<j_{Z_\beta}U,V\>\<j_{Z_\beta}S,T\>\bigr)\\
  \cdot&\bigl(\tfrac14\<j_{Z_\gamma}T,X\>\<j_{Z_\gamma}S,Y\>
    -\tfrac14\<j_{Z_\gamma}S,X\>\<j_{Z_\gamma}T,Y\>
    -\tfrac12\<j_{Z_\gamma}S,T\>\<j_{Z_\gamma}X,Y\>\bigr)
\end{split}
\end{equation}
for $X,Y,U,V,S,T\in\v$.
For $a,b,c\in\{1,2,3\}$ denote by $(a*b*c)$ the sum over $\alpha,\beta,\gamma$ of
the $a$-th summand in the first line,
the $b$-th summand in the second, and the $c$-th summand of third line of~\eqref{eq:threetimesthree}.
Then $(3*3*3)$ obviously yields, after summation over $X,Y,U,V,S,T\in\{X_1,\ldots,X_m\}$, a multiple
of $I_{\alpha\gamma|\beta\gamma|\alpha\beta}$, and thus contributes only to~$L_2$.
Five of the other $(a*b*c)$ (for example, $(1*1*1)$)
lead to multiples of certain $I_{s_1\ldots s_6}$ in which the only subtuple is of length six
(the reason being, just as we noted in the proof of~(i),
that there is no way to group the six factors into subsets which
would not be linked to each other by the occurrence any common vectors from $\{X,Y,U,V,S,T\}$); these
again contribute only to~$L_2$.
The only products which instead lead to a multiple of $I_{\alpha\beta\gamma|\alpha\beta\gamma}$
are $(1*1*2)$, $(1*2*1)$, $(2*1*1)$, and $(2*2*2)$. For example,
$$(1*1*2)=-\tfrac1{64}\tsum_{\alpha,\beta,\gamma}
  \<j_{Z_\alpha}Y,U\>\<j_{Z_\beta}U,T\>\<j_{Z_\gamma}T,Y\>
  \<j_{Z_\alpha}X,V\>\<j_{Z_\beta}V,S\>\<j_{Z_\gamma}S,X\>\\
$$
which after summation gives
$-\tfrac1{64}\tsum_{\alpha,\beta,\gamma}\Tr(j_{Z_\alpha}j_{Z_\beta}j_{Z_\gamma})\Tr(j_{Z_\alpha}j_{Z_\beta}
 j_{Z_\gamma})=-\tfrac1{64}I_{\alpha\beta\gamma|\alpha\beta\gamma}$.
The result is the same for each of the three other products just mentioned. So we obtain
$$4\cdot(-\tfrac1{64})I_{\alpha\beta\gamma|\alpha\beta\gamma}=-\tfrac1{16}I_{\alpha\beta\gamma|\alpha\beta\gamma}
$$
as the contribution to~$\hat R$ of the first summand in~\eqref{eq:twopartsrhat}, apart from
its contributions to~$L_2$.

For the second summand in~\eqref{eq:twopartsrhat}, we compute
\begin{equation}
  \begin{split}
  \<R(X,Y)U,&V\>\<R(U,V)Z,W\>\<R(Z,W)X,Y\>=\\
  \tsum_{\alpha=1}^r\hphantom{\cdot}
    &\bigl(\tfrac14\<j_{Z_\alpha}Y,U\>\<j_{Z_\alpha}X,V\>
    -\tfrac14\<j_{Z_\alpha}X,U\>\<j_{Z_\alpha}Y,V\>
    -\tfrac12\<j_{Z_\alpha}X,Y\>\<j_{Z_\alpha}U,V\>\bigr)\\
  \cdot&\tfrac1{16}\<[j_Z,j_W]U,V\>\<[j_Z,j_W]X,Y\>.
\end{split}
\end{equation}
The first two summands from the first line, multiplied with the factors from the second line,
will, after summation, yields multiples of certain $I_{s_1\ldots s_6}$ in which the only
subtuple is of length six; this gives a contribution to~$L_2$. The remaining term is
$$-\tfrac1{32}\tsum_\alpha\<j_{Z_\alpha}X,Y\>\<j_{Z_\alpha}U,V\>\<[j_Z,j_W]U,V\>\<[j_Z,j_W]X,Y\>
$$
which after summation over $X,Y,U,V$ gives $-\tfrac1{32}\sum_\alpha(\Tr(j_{\Z_\alpha}[j_Z,j_W]))^2$;
using skew-symmetry of the maps involved, this simplifies to $-\tfrac18\sum_\alpha(\Tr(j_{Z_\alpha}j_Zj_W))^2$.
Summation over $Z,W\in\{Z_1,\ldots,Z_r\}$ thus gives $-\tfrac18I_{\alpha\beta\gamma|\alpha\beta\gamma}$.
Hence, we obtain
$$3\cdot(-\tfrac18)I_{\alpha\beta\gamma|\alpha\beta\gamma}
$$
as the contribution to~$\hat R$ of the second summand in~\eqref{eq:twopartsrhat}, apart from
its contributions to~$L_2$. The statement now follows by $-\frac1{16}-\frac38=-\frac7{16}$.

(iii)
Although it would be possible to prove (iii) directly, similarly to the above proofs
for (i) and~(ii), we prefer to use the results of (i), (ii) together with those
from Lemma~\ref{lem:nablaric} and the integral relation from Proposition~\ref{prop:integralrelations}(iii).
If $G(j)$ admits a compact quotient, then it follows from local homogeneity and
Proposition~\ref{prop:integralrelations}(iii)
that
$$\rcirc=-|\nabla\ric|^2+\tfrac14|\nabla R|^2-\Tr(\Ric^3)+(*)+\tfrac12(**)-\tfrac14\hat R.
$$
By (i), (ii) and Lemma~\ref{lem:nablaric}, the right hand side is indeed of the form
$$\tfrac14\cdot(-\tfrac32)I_{\alpha\beta\gamma|\alpha\beta\gamma}
  -\tfrac14\cdot(-\tfrac7{16})I_{\alpha\beta\gamma|\alpha\beta\gamma}+L_3
  =-\tfrac{17}{64}I_{\alpha\beta\gamma|\alpha\beta\gamma}+L_3,
$$
where $L_3$ is a linear
combination of invariants in which only subtuples of even length occur. So we have proved
the statement of~(iii) in the case that $G(j)$ admits a compact quotient.

The statement in the general case now follows by continuity. In fact,
any $G(j)$ for which $j$ which is a rational map w.r.t.~the standard rational structures
on $\z=\R^r$ and $\so(\v)=\so(m)$ does admit a compact quotient, and the rational maps
are dense in the space of all linear maps $j:\z\to\so(\v)$.
\end{proof}

\end{document}